\documentclass[11pt,a4paper,reqno]{amsart}

\usepackage[T1]{fontenc}
\usepackage[english]{babel}

\usepackage{mathtools}
\usepackage{amssymb}
\usepackage{libertinus}
\usepackage[cal=boondoxo,bb=ams]{mathalfa}
\usepackage{microtype}

\allowdisplaybreaks[2]
\usepackage[
a4paper,
left=30mm,
right=30mm,
top=27mm,
bottom=30mm,
headsep=8mm,
footskip=13mm,
heightrounded
]{geometry}

\usepackage{graphicx}
\usepackage{xcolor}
\usepackage{tikz}
\usepackage{pgfplots}

\usepgfplotslibrary{fillbetween}
\pgfplotsset{compat=1.18}

\usetikzlibrary{
	patterns,
	patterns.meta,
	arrows.meta,
	positioning,
	matrix
}

\usepackage{enumitem}

\setlist[itemize]{
	leftmargin=2.1em,
	itemsep=0.25em,
	topsep=0.45em,
	parsep=0pt
}

\setlist[enumerate]{
	leftmargin=2.1em,
	itemsep=0.25em,
	topsep=0.45em,
	parsep=0pt
}

\numberwithin{equation}{section}

\theoremstyle{plain}
\newtheorem{theorem}{Theorem}[section]
\newtheorem{lemma}[theorem]{Lemma}

\newtheorem{coro}[theorem]{Corollary}

\theoremstyle{definition}

\theoremstyle{remark}
\newtheorem{remark}[theorem]{Remark}

\makeatletter

\def\@settitle{%
	\begin{center}
		\vspace*{-0.4em}
		{\normalfont\bfseries
			\fontsize{17}{21}\selectfont
			\@title\par}
		\vspace{0.7em}
	\end{center}
}

\renewcommand\section{%
	\@startsection{section}{1}{\z@}%
	{1.5\baselineskip plus 0.2\baselineskip minus 0.1\baselineskip}%
	{0.65\baselineskip}%
	{\normalfont\Large\bfseries}%
}

\renewcommand\subsection{%
	\@startsection{subsection}{2}{\z@}%
	{1.15\baselineskip plus 0.2\baselineskip minus 0.1\baselineskip}%
	{0.45\baselineskip}%
	{\normalfont\large\bfseries}%
}

\renewcommand\subsubsection{%
	\@startsection{subsubsection}{3}{\z@}%
	{0.9\baselineskip plus 0.15\baselineskip minus 0.1\baselineskip}%
	{0.35\baselineskip}%
	{\normalfont\normalsize\bfseries}%
}

\makeatother

\usepackage{etoolbox}

\makeatletter

\patchcmd{\@setauthors}
{\centering\footnotesize}
{\centering\normalsize}
{}
{\PackageWarning{Nakao-template}
	{Failed to patch the author font size}}

\patchcmd{\@setauthors}
{\MakeUppercase{\authors}}
{\authors}
{}
{\PackageWarning{Nakao-template}
	{Failed to patch the author capitalization}}

\patchcmd{\maketitle}
{\uppercasenonmath\shorttitle}
{}
{}
{\PackageWarning{Nakao-template}
	{Failed to patch the running-title formatting}}

\patchcmd{\maketitle}
{\@nx\MakeUppercase{\the\toks@}}
{\the\toks@}
{}
{\PackageWarning{Nakao-template}
	{Failed to patch the running-author formatting}}

\makeatother

\usepackage{hyperref}

\hypersetup{
	hidelinks,
	pdfencoding=auto
}

\usepackage[nameinlink,capitalise,noabbrev]{cleveref}

\newcommand{\ml}{\mathcal}
\newcommand{\mb}{\mathbb}

\DeclareMathOperator{\lin}{lin}
\DeclareMathOperator{\nlin}{nlin}

\title[Small data global in-time existence for Nakao's problem in two and three space dimensions]
{Small data global in-time existence for Nakao's problem in two and three space dimensions}

\author[W. Chen]{Wenhui Chen}

\address{
	School of Mathematics and Information Science,
	Guangzhou University,
	Guangzhou 510006,
	P. R. China
}

\email{wenhui.chen.math@gmail.com}

\keywords{
	Nakao's problem,
	weakly coupled systems,
	semilinear damped wave equation,
	semilinear wave equation,
	small data global in-time existence}
\subjclass[2020]{
	Primary 35L71;
	Secondary 35L05, 35B33, 35B40}
\date{}

\begin{document}
		
\begin{abstract}
We study Nakao's problem in two and three space dimensions, a weakly coupled system consisting of a semilinear damped wave equation and a semilinear undamped wave equation. We establish the global in-time existence and uniqueness of small data mild Sobolev solutions in new admissible ranges, together with time-dependent estimates matching those for the corresponding linearized problems. The proof combines diffusion-type $L^m-L^r$ estimates for the damped component with the Poisson and Kirchhoff formulas for the undamped component. Dimension-dependent solution spaces are introduced to incorporate the weaker wave decay and logarithmic $L^2$ growth in two space dimensions. In three space dimensions, a two-level fixed point argument avoids an artificial restriction on $p$ by establishing the self-map property in a strong space and the contraction in a weaker metric. Furthermore, the undamped component scatters to a free wave in $H^2\times H^1$.
\end{abstract}

\maketitle

\section{Introduction}
In this paper, we study Nakao's problem in two and three space dimensions, namely, the weakly coupled Cauchy problem
\begin{align}\label{Nakao-Problem}
	\begin{cases}
		u_{tt}-\Delta u+u_t=|v|^p,&x\in\mb{R}^n,\ t>0,\\
		v_{tt}-\Delta v=|u|^q,&x\in\mb{R}^n,\ t>0,\\
		(u,u_t)(0,x)=(u_0,u_1)(x),&x\in\mb{R}^n,\\
		(v,v_t)(0,x)=(v_0,v_1)(x),&x\in\mb{R}^n,
	\end{cases}
\end{align}
where $n\in\{2,3\}$ and $p,q>1$. This system has a mixed dissipative-dispersive structure. The first component is governed by a damped wave equation and exhibits diffusion-type decay, in accordance with the diffusion phenomenon for damped waves, for example, \cite{Nishihara=2003}. By contrast, the second component retains the dispersive character of a free wave. Its representation is governed by the Poisson formula in two space dimensions and by the Kirchhoff formula in three space dimensions. Thus, the system couples two components with fundamentally different large time behavior. One is diffusion-like, whereas the other remains genuinely wave-like. This asymmetry precludes a direct application of the standard methods developed for either weakly coupled wave-wave systems or weakly coupled damped-damped wave systems.

We now recall some background related to Nakao's problem. Because \eqref{Nakao-Problem} contains one damped component and one undamped component, it is natural to compare it with two benchmark weakly coupled systems, namely, the semilinear wave-wave system and the semilinear damped-damped wave system. Particularly, let us recall their critical curves in the $p-q$ plane, which serve as the thresholds between the small data global in-time existence range and the finite-time blow-up range for the corresponding benchmark problems.
\begin{itemize}
	\item The first benchmark problem is the weakly coupled system of semilinear wave equations
	\begin{align}\label{Wave-System}
		\begin{cases}
			u_{tt}-\Delta u=|v|^p,&x\in\mb{R}^n,\ t>0,\\
			v_{tt}-\Delta v=|u|^q,&x\in\mb{R}^n,\ t>0,\\
			(u,u_t)(0,x)=(u_0,u_1)(x),&x\in\mb{R}^n,\\
			(v,v_t)(0,x)=(v_0,v_1)(x),&x\in\mb{R}^n,
		\end{cases}
	\end{align}
	where $n\in\mb{N}$ and $p,q>1$. This model corresponds to the purely wave-like counterpart of \eqref{Nakao-Problem} and has been studied extensively (see
	\cite{Delsanto=1997,DelSanto-Georgiev-Mitidieri=1997,DelSanto-Mitidieri=1998,Kubo-Ohta=1999,Agemi-Kurokawa-Takamura=2000,Kurokawa-Takamura=2003,Kurokawa=2005,Georgiev-Takamura-Zhou=2006,Kurokawa-Takamura-Wakasa=2012,Ikeda-Sobajima-Wakasa=2019}
	and the references therein). Its critical curve is described by the quantity
	\begin{align*}
		\Gamma_{\mathrm{Wave},n}(p,q):=\max\left\{\frac{p+2+q^{-1}}{pq-1},\frac{q+2+p^{-1}}{pq-1}\right\}-\frac{n-1}{2},
	\end{align*}
	which is of Strauss-type. Indeed, in the symmetric case $p=q$, it reduces to the Strauss critical exponent (see, for example, \cite{John=1979,Strauss=1981}) for the semilinear wave equation.
	\item The second benchmark problem is the weakly coupled system of semilinear damped wave equations
	\begin{align}\label{Damped-Wave-System}
		\begin{cases}
			u_{tt}-\Delta u+u_t=|v|^p,&x\in\mb{R}^n,\ t>0,\\
			v_{tt}-\Delta v+v_t=|u|^q,&x\in\mb{R}^n,\ t>0,\\
			(u,u_t)(0,x)=(u_0,u_1)(x),&x\in\mb{R}^n,\\
			(v,v_t)(0,x)=(v_0,v_1)(x),&x\in\mb{R}^n,
		\end{cases}
	\end{align}
	where $n\in\mb{N}$ and $p,q>1$. This model corresponds to the fully damped counterpart of \eqref{Nakao-Problem} and reflects a diffusion-like interaction in both components. It has been investigated recently (see
	\cite{Sun-Wang=2007,Narazaki=2009,Nishihara=2012,Nishihara-Wakasugi=2014,Nishihara-Wakasugi=2015,Chen-Dao=2023}
	and the references therein). Its critical curve is given by the quantity
	\begin{align*}
		\Gamma_{\mathrm{DWave},n}(p,q):=\max\left\{\frac{p+1}{pq-1},\frac{q+1}{pq-1}\right\}-\frac{n}{2},
	\end{align*}
	which is of Fujita-type. Indeed, in the symmetric case $p=q$, it reduces to the Fujita critical exponent (see, for example, \cite{Fujita=1966,Li-Zhou=1995}) for the semilinear heat equation. This is consistent with the diffusion phenomenon for the semilinear damped wave equation.
\end{itemize}
In other words, for
$\Gamma_n(p,q)\in\{\Gamma_{\mathrm{Wave},n}(p,q),\Gamma_{\mathrm{DWave},n}(p,q)\}$,
the condition $\Gamma_n(p,q)<0$ describes the expected small data global in-time existence range, whereas $\Gamma_n(p,q)\geqslant0$ gives, in general, the corresponding finite-time blow-up range, possibly under the usual sign, non-triviality or compact support assumptions on the initial data.

Motivated by the two benchmark systems above, Mitsuhiro Nakao proposed the problem of determining the critical curve for the mixed weakly coupled system, now known as Nakao's problem, in the whole space $\mb{R}^n$. He first studied related coupled systems of wave and damped wave equations in bounded domains (see, for example, \cite{Nakao=2016,Nakao=2018}). In contrast to the two standard systems recalled above, Nakao's problem contains only one frictional damping term, namely, $+u_t$ in the first equation. Hence, its critical curve cannot be read off directly from either the Strauss-type curve for the wave system \eqref{Wave-System} or the Fujita-type curve for the damped wave system \eqref{Damped-Wave-System}. Instead, it is expected to describe a genuinely mixed threshold, reflecting how the diffusion-like behavior of the damped component $u$ and the wave-like propagation of the undamped component $v$ are weakly coupled through the nonlinear source terms.

Let us now summarize the known results for Nakao's problem \eqref{Nakao-Problem} in the whole space $\mb{R}^n$. In the usual energy framework, the restrictions $1<p,q<\infty$ for $n\leqslant2$ and $1<p,q\leqslant\frac{n}{n-2}$ for $n\geqslant3$, which follow from an application of the Gagliardo-Nirenberg inequality, are imposed to ensure local in-time existence. By using the classical test function method introduced by \cite{Zhang=2001}, a blow-up result for weak solutions was proved in \cite{Wakasugi=2017} if $\Gamma_{\mathrm{Wak},n}(p,q)\geqslant0$, where
\begin{align*}
	\Gamma_{\mathrm{Wak},n}(p,q)&:=\max\left\{\frac{q+1}{pq-1}-\frac{n}{2},\frac{q+2}{pq-1}-(n-1),\frac{p+1}{pq-1}-\frac{n}{2}\right\}\notag\\[0.3em]
	&\ =\max\left\{\frac{q+2}{pq-1}-(n-1),\Gamma_{\mathrm{DWave},n}(p,q)\right\}.
\end{align*}
We point out that the one-dimensional case is already completely
settled by this blow-up result. Actually, for $n=1$, one has
\begin{align*}
	\Gamma_{\mathrm{Wak},1}(p,q)
	\geqslant
	\frac{q+2}{pq-1}>0\ \ \mbox{for every}\ \ p,q>1.
\end{align*}
Hence, under the assumptions imposed in \cite{Wakasugi=2017}, finite-time blow-up occurs throughout the entire exponent range $p,q>1$. In this sense, the blow-up result is optimal with respect to the nonlinear powers in one space dimension, and the genuinely open global in-time existence problem begins in dimensions $n\geqslant 2$.
This condition via $\Gamma_{\mathrm{Wak},n}(p,q)$ has a diffusion-like character and is related to the Fujita-type threshold for the weakly coupled damped wave system \eqref{Damped-Wave-System}. Subsequently, a wave-like blow-up result for Nakao's problem was obtained in \cite{Chen-Reissig=2021} by using an iteration method combined with the slicing procedure. This procedure was originally introduced in \cite{Agemi-Kurokawa-Takamura=2000} and was later further developed in \cite{Chen-Palmieri=2020} to deal with unbounded exponential multipliers. More precisely, every non-trivial energy solution blows up in finite time if $\Gamma_{\mathrm{CR},n}(p,q)>0$, where
\begin{align*}
	\Gamma_{\mathrm{CR},n}(p,q):=\max\left\{\frac{q+2}{pq-1}-(n-1),\frac{2p+1}{pq-1}-n,\frac{2+p^{-1}}{pq-1}-\frac{n-1}{2}\right\}.
\end{align*}
More recently, the test function method developed in \cite{Ikeda-Sobajima-Wakasa=2019} was applied in \cite{Kita-Kusaba=2022} to demonstrate finite-time blow-up for positive and compactly supported initial data whenever
\begin{align*}
	\Gamma_{\mathrm{Nakao},n}(p,q):=\max\big\{\Gamma_{\mathrm{Wak},n}(p,q),\Gamma_{\mathrm{CR},n}(p,q)\big\}>0.
\end{align*}
Until very recently, the small data global in-time existence for Nakao's problem in the whole space had remained open. The authors of \cite{Georgiev-Kita=2026} have recently established global in-time existence in three space dimensions by developing weighted pointwise estimates for the Duhamel term of the damped wave equation. Their estimates combine the parabolic weight associated with the diffusion phenomenon and a wave-cone weight inherited from the undamped component. By working in weighted $L^\infty$ spaces, they obtain global in-time mild solutions with either the standard or weaker pointwise decay rates. Combining the exponent conditions in their global in-time existence corollaries, their three-dimensional admissible region can be written as
\begin{align*}
	\Omega_{\mathrm{GK},3}:={}&\left\{(p,q)\in\mb{R}^2:\ 1<p\leqslant3\ \ \mbox{and}\ \ q>\frac4p+\frac2{p^2}\right\}\\
	&\quad\cup\left\{(p,q)\in\mb{R}^2:\ p>3 \ \ \mbox{and}\ \ q>1+\frac5{3p}\right\}.
\end{align*}
At the level of the power exponents $p$ and $q$, the three-dimensional region obtained in the present paper is contained in $\Omega_{\mathrm{GK},3}$.

The two approaches, however, are formulated in substantially different functional settings and provide different information on the solutions. The result in \cite{Georgiev-Kita=2026} assumes polynomial pointwise decay of the initial data and controls the solutions in weighted space-time $L^\infty$ spaces. In contrast, the present paper works in a Sobolev-energy framework based on diffusion-type $L^m-L^r$ estimates and spatial integrability assumptions. It yields mild Sobolev solutions with $H^2\times H^1$ regularity, time-dependent estimates matching the corresponding linearized problems, and asymptotic free wave behavior for the undamped component. In particular, the data classes and the resulting solution theories are not directly comparable.

The main purpose of this paper is therefore twofold. First, we establish small data global in-time existence for Nakao's problem in two space dimensions, where, to the best of the author's knowledge, no previous global in-time existence result in the whole space is available. Second, we develop a dimension-dependent Sobolev framework that applies simultaneously in two and three space dimensions and complements the recent weighted pointwise approach in the three-dimensional case \cite{Georgiev-Kita=2026}. Our admissible ranges are
\begin{align}\label{Range-2D}
	\Omega_{\mathrm{Nakao},2}&:=\left\{(p,q)\in\mb{R}^2:\  p>\max\left\{3,2+\frac4q\right\}\ \ \mbox{and}\ \ q>2\right\},\\[0.3em]
	\label{Range-3D}
	\Omega_{\mathrm{Nakao},3}&:=	\left\{(p,q)\in\mb{R}^2:\ p>\max\left\{2,1+\frac{10}{3q}\right\}\ \ \mbox{and}\ \ q>2\right\}.
\end{align}

The proof is organized around two main difficulties.
\begin{description}
	\item[Difficulty I] We have to place the damped and undamped components in compatible time-weighted spaces, despite the different linear mechanisms governing their large time behavior. The damped component is controlled by diffusion-type $L^m-L^{r_n}$ estimates with suitable $m$ and $r_n$ that will be determined later, whereas the undamped component is estimated through the Poisson formula in two space dimensions and the Kirchhoff formula in three space dimensions. In the two-dimensional case, the weaker wave decay and the optimal logarithmic $L^2$ growth require a dimension-dependent solution space. Moreover, we choose
	$r_2=\min\{q,4\}$ and $r_3=\min\{q,\frac{10}{3}\}$ rather than always taking $r_n=q$. These choices separate the Sobolev regularity required by the damped wave estimate from the exponent appearing in the nonlinearity $|u|^q$ and allow $q$ to be arbitrarily large in both dimensions.
	\item[Difficulty II] The strong self-map estimate requires second-order derivative bounds for $|v|^p$, while a contraction estimate in the same norm would involve the difference of the second-order derivatives of the nonlinearities. For $2<p<3$, this would impose the artificial restriction $p\geqslant3$. We therefore establish the self-map property in a strong solution space and justify the contraction in a weaker metric involving only first-order derivative nonlinear differences. This two-level fixed point argument allows us to treat the portion
	$2<p<3$ of the three-dimensional Sobolev range without imposing the
	artificial restriction $p\geqslant3$. Although the condition $p>3$ would allow one to establish the contraction directly in the strong norm when $n=2$, we use the same
	weaker metric in both dimensions $n\in\{2,3\}$ in order to retain a unified argument. The loss of one spatial derivative in the contraction metric is essential only in the three-dimensional case.
\end{description}

\paragraph{Notation.}
Throughout this paper, the constants $C$ and $c$ are positive and may change from line to line. We write $f\lesssim g$ if there exists a constant $C>0$, independent of $t$, $T$ and the initial data, such that $f\leqslant Cg$. We write $f\approx g$ if both $f\lesssim g$ and $g\lesssim f$ hold. The symbol $\ast_{(x)}$ denotes convolution with respect to the spatial variable. For $s\geqslant0$ and $1<r<\infty$, the notation $W^{s,r}$ stands for the Bessel-potential space equipped with the norm
\begin{align*}
	\|f\|_{W^{s,r}}:=\left\|(1+|\nabla|^2)^{\frac{s}{2}} f\right\|_{L^r}.
\end{align*}
For an integer $k\geqslant0$ and $r\in\{1,\infty\}$, the notation $W^{k,r}$ denotes the usual Sobolev space equipped with norm 
\begin{align*}
	\|f\|_{W^{k,r}}:=\sum_{|\alpha|\leqslant k}\|\partial_x^\alpha f\|_{L^r}.
\end{align*} 
Especially, we write $H^s:=W^{s,2}$. We denote by $|\nabla|$ the Fourier multiplier with symbol $|\xi|$. Finally, we put $\langle t\rangle:=1+t$.

\section{Main results}

\subsection{Global in-time existence theorem}

 We now state the main result of this paper. The parameter $m$ is an auxiliary integrability exponent used in the $L^m-L^r$ estimates for the damped wave component $u$. For notational convenience, we introduce the parameter
\begin{align}\label{Dimension-Parameters}
	r_n:=
	\begin{cases}
		\min\{q,4\}&\mbox{if}\ \ n=2,\\
		\min\left\{q,\frac{10}{3}\right\}&\mbox{if}\ \ n=3,
	\end{cases}
\end{align}
and the time-dependent function
\begin{align}\label{Dimension-Parameters-2}
	\Lambda_n(t):=
	\begin{cases}
		\sqrt{\ln(\mathrm{e}+t)}&\mbox{if}\ \ n=2,\\
		1&\mbox{if}\ \ n=3.
	\end{cases}
\end{align}
The assumptions on $(u_0,u_1)$ are chosen to match the Sobolev regularity required by the linear damped wave equation, whereas those on $(v_0,v_1)$ are imposed in order to obtain the Poisson- or Kirchhoff-type $L^\infty$ estimate and the corresponding energy bound for the undamped wave component. Here and below, a mild Sobolev solution means a pair belonging to the stated Sobolev class and satisfying the Duhamel formulas introduced in Section~\ref{Subsection-Functional-Setting}.

\begin{theorem}[Global in-time existence]\label{Thm-01}
Let $n\in\{2,3\}$, and let $r_n$ and $\Lambda_n$ be as in \eqref{Dimension-Parameters} and \eqref{Dimension-Parameters-2}, respectively.
	\begin{itemize}
		\item If $n=2$, let $(p,q)\in\Omega_{\mathrm{Nakao},2}$ defined in \eqref{Range-2D} and fix a parameter $m$ satisfying
		\begin{align}\label{Restriction-m-2D}
			\max\left\{1,\frac{2}{p-2}\right\}<m<\min\left\{2,\frac{q r_2}{q+r_2}\right\}.
		\end{align}
		\item If $n=3$, let $(p,q)\in\Omega_{\mathrm{Nakao},3}$ defined in \eqref{Range-3D} and fix a parameter $m$ satisfying
		\begin{align}\label{Restriction-m-3D}
			\max\left\{1,\frac{2}{p-1},\frac{3r_3}{2r_3+3}\right\}<m<\min\left\{2,\frac{3qr_3}{3q+2r_3}\right\}.
		\end{align}
	\end{itemize}
	Suppose that
	\begin{align*}
		(u_0,u_1)\in\ml{A}_n&:=\big(W^{2,m}\cap W^{3-\frac{2}{r_n},r_n}\cap H^2\big)\times\big(W^{2,m}\cap W^{2-\frac{2}{r_n},r_n}\cap H^1\big),\\[0.3em]
		(v_0,v_1)\in\ml{B}_n&:=\big(W^{2,1}\cap W^{1,\infty}\cap H^2\big)\times\big(W^{1,1}\cap L^\infty\cap H^1\big).
	\end{align*}
	Then, there exists a constant $\varepsilon_0>0$ such that if
	\begin{align*}
		\|(u_0,u_1)\|_{\ml{A}_n}+\|(v_0,v_1)\|_{\ml{B}_n}\leqslant\varepsilon_0,
	\end{align*}
	then Nakao's problem \eqref{Nakao-Problem} admits a global in-time mild Sobolev solution $(u,v)$ satisfying
	\begin{align*}
		\begin{cases}
			\displaystyle{u\in\ml{C}\big([0,\infty),W^{2,r_n}\cap H^2\big)\cap\ml{C}^1\big([0,\infty),H^1\big)},\\[0.5em]
			\displaystyle{v\in\ml{C}\big([0,\infty),L^\infty\cap H^2\big)\cap\ml{C}^1\big([0,\infty),H^1\big)}.
		\end{cases}
	\end{align*}
	This solution is unique in the class of sufficiently small global in-time mild Sobolev solutions obeying the following time-dependent estimates:
	\begin{align*}
		\|u(t,\cdot)\|_{W^{2,r_n}}&\lesssim\langle t\rangle^{-\frac{n}{2}\left(\frac{1}{m}-\frac{1}{r_n}\right)}\big(\|(u_0,u_1)\|_{\ml{A}_n}+\|(v_0,v_1)\|_{\ml{B}_n}\big),\\
        \|u(t,\cdot)\|_{H^2}&\lesssim\|(u_0,u_1)\|_{\ml{A}_n}+\|(v_0,v_1)\|_{\ml{B}_n},\\
		\|u_t(t,\cdot)\|_{H^1}&\lesssim\langle t\rangle^{-1}\big(\|(u_0,u_1)\|_{\ml{A}_n}+\|(v_0,v_1)\|_{\ml{B}_n}\big),
	\end{align*}
	and
	\begin{align*}
		\|v(t,\cdot)\|_{L^\infty}
		&\lesssim\langle t\rangle^{-\frac{n-1}{2}}
		\big(\|(u_0,u_1)\|_{\ml{A}_n}+\|(v_0,v_1)\|_{\ml{B}_n}\big),\\
		\|v(t,\cdot)\|_{H^2}
		&\lesssim\Lambda_n(t)
		\big(\|(u_0,u_1)\|_{\ml{A}_n}+\|(v_0,v_1)\|_{\ml{B}_n}\big),\\
		\|v_t(t,\cdot)\|_{H^1}
		&\lesssim\|(u_0,u_1)\|_{\ml{A}_n}+\|(v_0,v_1)\|_{\ml{B}_n},
	\end{align*}
	for all $t\geqslant0$.
\end{theorem}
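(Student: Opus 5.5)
We argue by a Banach fixed point scheme on the Duhamel formulation. Write $u=u^{\lin}+u^{\nlin}$ with $u^{\nlin}(t)=\int_0^tE_1(t-s)\ast_{(x)}|v(s)|^p\,\mathrm{d}s$, where $E_1$ is the damped wave kernel, and $v=v^{\lin}+v^{\nlin}$ with $v^{\nlin}(t)=\int_0^tW(t-s)\ast_{(x)}|u(s)|^q\,\mathrm{d}s$, where $W$ is the free wave propagator given by the Poisson formula ($n=2$) or the Kirchhoff formula ($n=3$). The strong space $X_n(T)$ consists of pairs $(u,v)$ with the regularity of the theorem, normed by
\begin{align*}
\|(u,v)\|_{X_n(T)}:=\sup_{0\leqslant t\leqslant T}\Big(&\langle t\rangle^{\frac n2(\frac1m-\frac1{r_n})}\|u(t)\|_{W^{2,r_n}}+\|u(t)\|_{H^2}+\langle t\rangle\|u_t(t)\|_{H^1}\\
&+\langle t\rangle^{\frac{n-1}2}\|v(t)\|_{L^\infty}+\Lambda_n(t)^{-1}\|v(t)\|_{H^2}+\|v_t(t)\|_{H^1}\Big).
\end{align*}
The weaker metric $Y_n(T)$ uses the same weights one derivative lower, namely $W^{1,r_n}$, $H^1$ and $L^2$ in place of $W^{2,r_n}$, $H^2$ and $H^1$, together with the $L^\infty$ norm of $v$. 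We then show three things. The first is the linear bound $\|(u^{\lin},v^{\lin})\|_{X_n}\lesssim\|(u_0,u_1)\|_{\ml A_n}+\|(v_0,v_1)\|_{\ml B_n}$. The second is the self-map estimate $\|\mathcal N(u,v)\|_{X_n}\lesssim\|(u,v)\|_{X_n}^p+\|(u,v)\|_{X_n}^q$. The third is the contraction estimate $\|\mathcal N(u,v)-\mathcal N(\bar u,\bar v)\|_{Y_n}\lesssim\|(u,v)-(\bar u,\bar v)\|_{Y_n}\big(R^{p-1}+R^{q-1}\big)$ on the ball of radius $R\sim\varepsilon$ in $X_n$. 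Since this ball is closed in the $Y_n$ metric (weak-$*$ lower semicontinuity of the strong norms), the fixed point exists and is unique for small data. Continuity in time and $T$-independent constants then give the global solution.

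For the linear part of $u$ we use the $(L^m\cap L^2)$–$L^{r_n}$ diffusion estimates for the damped wave. These give the $\langle t\rangle^{-\frac n2(\frac1m-\frac1{r_n})}$ rate with the regularity loss $W^{3-\frac2{r_n},r_n}$ coming from the high-frequency wave part. For $v^{\lin}$ we use the Poisson or Kirchhoff formula with $W^{2,1}\cap W^{1,\infty}$ data to obtain $\langle t\rangle^{-\frac{n-1}2}$ in $L^\infty$, and energy conservation for $H^1$ of $v_t$. For $\|v\|_{H^2}$ we bound $\|v\|_{L^2}$ by $\int_0^t\|v_t\|_{L^2}$ through low-frequency analysis: $\|\frac{\sin(t|\nabla|)}{|\nabla|}v_1\|_{L^2}$ is bounded when $n=3$ and grows like $\sqrt{\ln(\mathrm e+t)}$ when $n=2$, using $v_1\in L^1\cap L^2$. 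This is the source of $\Lambda_n$.

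For the nonlinear part of $u$ we split $\int_0^t=\int_0^{t/2}+\int_{t/2}^t$ and apply the $L^m\cap L^2$ to $L^{r_n}$ (respectively $H^2$) estimates to $|v|^p$. Here $\||v|^p\|_{L^m}\lesssim\|v\|_{L^\infty}^{p-\frac2m}\|v\|_{L^2}^{\frac2m}\lesssim\langle s\rangle^{-\frac{n-1}2(p-\frac2m)}\Lambda_n(s)^{2/m}R^p$, and the lower bound on $m$ in \eqref{Restriction-m-2D} and \eqref{Restriction-m-3D} is exactly what makes this decay integrable. Derivatives are estimated by $\|\nabla^2|v|^p\|\lesssim\|v\|_{L^\infty}^{p-1}\|\nabla^2v\|+\|v\|_{L^\infty}^{p-2}\|\nabla v\|_{L^4}^2$, using Gagliardo–Nirenberg and $p>2$. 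For $v^{\nlin}$ we apply the Poisson or Kirchhoff $L^\infty$ estimate to the source $|u|^q$, which requires $\||u|^q\|_{W^{1,1}}$ and, in three dimensions, $\nabla^2$ control integrated over the cone. We interpolate $\|u\|_{L^q}$ between the $H^2$ and $W^{2,r_n}$ bounds via Sobolev embedding; this is why $r_n=\min\{q,4\}$ or $\min\{q,\frac{10}3\}$ suffices and $q$ may be large. The decay rate $\langle s\rangle^{-\frac{nq}2(\frac1m-\frac1{r_n})}$ must beat $\langle t-s\rangle$-type wave weights, and this yields the upper bounds on $m$ and the conditions $q>2$ and $p>\max\{\cdots\}$. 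The energy estimate for $v^{\nlin}$ needs $\int_0^\infty\||u|^q\|_{H^1}\,\mathrm ds<\infty$.

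We expect the main obstacle to be the $L^\infty$ estimate of $v^{\nlin}$ in two dimensions. The Poisson kernel decays only like $t^{-1/2}$ and is singular on the cone. We would treat it by splitting the source in time and bounding the kernel in $L^{\infty}$ away from the cone while using $W^{1,1}$ and $W^{1,\infty}$ norms of $|u|^q$ near it. All exponents must be checked to be consistent with the logarithmic growth of $\|v\|_{H^2}$ entering the $|v|^p$ estimates. In the contraction step we use $\big||v|^p-|\bar v|^p\big|\lesssim(|v|^{p-1}+|\bar v|^{p-1})|v-\bar v|$ and only first derivatives, which require just $p>2$. This is what avoids the restriction $p\geqslant3$.
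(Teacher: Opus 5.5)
Your architecture matches the paper's: the same strong norm, a weak metric one derivative lower, contraction in that metric on a strong ball, and recovery of the strong bounds by weak-$*$ lower semicontinuity. The gap is your weak metric for $u$. It contains only $\langle t\rangle^{\frac n2(\frac1m-\frac1{r_n})}\|u\|_{W^{1,r_n}}$, $\|u\|_{H^1}$ and $\langle t\rangle\|u_t\|_{L^2}$.

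To close the weighted $L^\infty$ component of $v-\bar v$, you must apply the Kirchhoff/Poisson estimate to $|u|^q-|\bar u|^q$. That component is in your metric and is needed, e.g.\ for the term $(|v|^{p-2}v-|\bar v|^{p-2}\bar v)\nabla\bar v$. The Kirchhoff/Poisson estimate requires $\||u|^q-|\bar u|^q\|_{L^\infty}$ with decay $\langle\tau\rangle^{-q\frac n2(\frac1m-\frac1{r_n})}$. Hence you need $\langle\tau\rangle^{\frac n2(\frac1m-\frac1{r_n})}\|u(\tau)-\bar u(\tau)\|_{L^\infty}$ to be linearly bounded by the metric.

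For $n=2$, or for $n=3$ with $q>3$, this follows from $W^{1,r_n}\hookrightarrow L^\infty$. But $\Omega_{\mathrm{Nakao},3}$ contains pairs with $2<q\leqslant3$, e.g.\ $(p,q)=(\frac52,3)$. There $r_3=q\leqslant3$, and $W^{1,r_3}\cap H^1$ does not embed into $L^\infty(\mathbb{R}^3)$. Interpolating with the strong $W^{2,r_3}$ bound only gives $d^\theta R^{1-\theta}$, which does not give a contraction.

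The paper fixes this by putting $\langle t\rangle^{\frac n2(\frac1m-\frac1{r_n})}\|u\|_{L^\infty}$ into the weak norm. It then has to show that $\mathcal{N}_1$ is contractive in this new component too, which is not automatic:
\begin{itemize}
\item it reruns the lower-order damped wave estimate with an auxiliary target exponent $\sigma\in(3,\frac{10}{3}]$, chosen so that $m>\frac{3\sigma}{2\sigma+3}$;
\item it uses $H^1\hookrightarrow W^{1-\frac2\sigma,\sigma}$ for the high frequencies and $W^{1,\sigma}\hookrightarrow L^\infty$;
\item it observes that the resulting rate $\frac32(\frac1m-\frac1\sigma)$ exceeds the required $\frac32(\frac1m-\frac1q)$ while staying below $1$.
\end{itemize}
Alternatively, you could redo the short-time part of the Kirchhoff bound using a $W^{1,q}$ norm of the source instead of $L^\infty$. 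Either way, some additional idea is needed here.

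The same principle applies throughout: every source norm entering a difference estimate must be controlled linearly by the weak metric. It exposes two further inaccuracies in your wave step.
\begin{itemize}
\item The Duhamel term involves only $\frac{\sin(t|\nabla|)}{|\nabla|}$. The Kirchhoff and Poisson bounds need only $\||u|^q\|_{W^{1,1}\cap L^\infty}$: the trace lemma on spheres, respectively on the layer $t-1\leqslant|x-y|<t$, plus the $L^\infty$ bound for short times. No $\nabla^2$ control of the source is needed in three dimensions; that enters only through $\cos(t|\nabla|)v_0$, which is why $v_0\in W^{2,1}$ is assumed. If you imposed such control, the contraction would face exactly the second-order differences, now of $|u|^q$, that the weak metric is meant to avoid.
\item Your two-dimensional plan to use $W^{1,\infty}$ of $|u|^q$ near the cone is harmless for the self-map. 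It fails for the difference, because $\nabla(u-\bar u)\in L^\infty$ is not controlled by your weak metric.
\end{itemize}
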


\begin{remark}[Preservation of the time rates]
	The estimates in Theorem~\ref{Thm-01} have the same time rates as the corresponding estimates for the linearized Cauchy problems in Lemma~\ref{Lemma-damped-wave} and Lemma~\ref{Lemma-Wave}.
\end{remark}

\begin{remark}[Intrinsic logarithmic growth in two space dimensions]\label{Remark-Optimal-Logarithmic-Growth}
	We emphasize that the factor $\Lambda_2(t)=\sqrt{\ln(\mathrm{e}+t)}$ is intrinsic to the two-dimensional free wave propagation and is not a loss caused by the nonlinear argument. Indeed, consider the homogeneous free wave equation in $\mb{R}^2$ with initial data $(\varphi_0,\varphi_1)=(0,\varphi_1)$, and set the zeroth moment
	\begin{align*}
		P_{\varphi_1}:=\int_{\mb{R}^2}\varphi_1(x)\,\mathrm{d}x\  \ \mbox{for}\ \ \varphi_1\in L^1.
	\end{align*}
	For suitable $\varphi_1\in H^1\cap L^1$ satisfying $P_{\varphi_1}\neq0$, the next sharp lower bound estimate for the solution $W_1(t,|\nabla|)\varphi_1(x)$ to the free wave equation:
	\begin{align*}
		\|W_1(t,|\nabla|)\varphi_1(\cdot)\|_{L^2}\gtrsim\sqrt{\ln t}\,|P_{\varphi_1}|
	\end{align*}
	holds as $t\gg1$ (see \cite{Ikehata=2023,Chen-Takeda=2023,Chen-Ikehata=2026,Takeda=2026} for its large time optimality). Since the $H^2$ norm dominates the $L^2$ norm, the factor $\Lambda_2(t)$ in \eqref{Wave-Energy} cannot, in general, be replaced by a uniformly bounded function. So, the logarithmic growth
	preserved in Theorem~\ref{Thm-01} reflects an essential
	low-frequency feature of the undamped component.
\end{remark}

\begin{remark}[Comparison with the weighted pointwise framework]
	The recent three-dimensional result in \cite{Georgiev-Kita=2026} covers a larger region of nonlinear exponents by working in weighted space-time $L^\infty$ spaces. At the level of exponent pairs, one has $\Omega_{\mathrm{Nakao},3} \subsetneq \Omega_{\mathrm{GK},3}$. The two theorems nevertheless concern different classes of initial data and solutions. The weighted pointwise framework imposes explicit polynomial spatial decay and provides refined pointwise estimates, whereas Theorem~\ref{Thm-01} gives a mild Sobolev solution in an $H^2\times H^1$-based class, preserves the time rates of the linearized problems, and yields the scattering statement in Corollary~\ref{Cor-Free-Wave}. Thus, the three-dimensional part of Theorem~\ref{Thm-01} should be viewed as complementary to the result in \cite{Georgiev-Kita=2026}.
\end{remark}

As a further consequence of the time-integrability of the source term $|u|^q$ in the undamped wave equation, we obtain the following asymptotic free wave behavior.

\begin{coro}[Asymptotic free wave behavior]\label{Cor-Free-Wave}
	Under the assumptions of Theorem~\ref{Thm-01}, the undamped component $v$ scatters to a free wave in $H^2\times H^1$. More precisely, there exist asymptotic data $(v_0^+,v_1^+)\in H^2\times H^1$ such that
	\begin{align*}
		\lim_{t\to\infty}\sum_{\ell\in\{0,1\}}\left\|\partial_t^\ell v(t,\cdot)-\partial_t^\ell\left(\cos(t|\nabla|)v_0^+(\cdot)+\frac{\sin(t|\nabla|)}{|\nabla|}v_1^+(\cdot)\right)\right\|_{H^{2-\ell}}=0.
	\end{align*}
\end{coro}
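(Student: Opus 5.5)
The plan is to use the Duhamel representation of $v$ together with a Cook-type argument. By Theorem~\ref{Thm-01} and the mild formulation, the undamped component satisfies
\begin{align*}
	v(t,\cdot)=\cos(t|\nabla|)v_0+\frac{\sin(t|\nabla|)}{|\nabla|}v_1+\int_0^t\frac{\sin((t-s)|\nabla|)}{|\nabla|}|u(s,\cdot)|^q\,\mathrm{d}s.
\end{align*}
Pulling out the free evolution gives $\sin((t-s)|\nabla|)=\sin(t|\nabla|)\cos(s|\nabla|)-\cos(t|\nabla|)\sin(s|\nabla|)$. This suggests the candidate asymptotic data
\begin{align*}
	v_0^+:=v_0-\int_0^\infty\frac{\sin(s|\nabla|)}{|\nabla|}|u(s,\cdot)|^q\,\mathrm{d}s,\qquad v_1^+:=v_1+\int_0^\infty\cos(s|\nabla|)|u(s,\cdot)|^q\,\mathrm{d}s.
\end{align*}
A direct computation then shows that the difference between $(v,v_t)(t)$ and the free wave with data $(v_0^+,v_1^+)$ is the free evolution, at time $t$, of tail integrals $\int_t^\infty(\cdots)|u(s)|^q\,\mathrm{d}s$.

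Rather than working with $v_0^+$ directly, where $|\nabla|^{-1}$ at low frequency is delicate in $n=2$, I will measure everything in the energy space. The map $(\phi_0,\phi_1)\mapsto(\phi,\phi_t)(t)$ of the free wave equation is an isometry, up to equivalence, on $\dot H^1\cap\dot H^2$ for the first component and on $H^1$ for the second. Duhamel's principle in energy form then gives, for $t_2>t_1$,
\begin{align*}
	\sum_{\ell\in\{0,1\}}\big\|\partial_t^\ell(\mathcal{W}(-t_2)-\mathcal{W}(-t_1))(v,v_t)\big\|\lesssim\int_{t_1}^{t_2}\big\||u(s,\cdot)|^q\big\|_{H^1}\,\mathrm{d}s,
\end{align*}
where $\mathcal{W}(-t)$ pulls the solution back by the free group. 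So the key step is the integrability $\||u(s)|^q\|_{H^1}\in L^1(0,\infty)$. This yields the Cauchy property of $\mathcal{W}(-t)(v,v_t)$ in $\dot H^1\cap\dot H^2\times H^1$, hence the limit $(v_0^+,v_1^+)$.

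To bound $\||u|^q\|_{L^2}+\|\nabla|u|^q\|_{L^2}$, I will use $\||u|^q\|_{L^2}\lesssim\|u\|_{L^{2q}}^q$ and $\|\nabla|u|^q\|_{L^2}\lesssim\|u\|_{L^\infty}^{q-1}\|\nabla u\|_{L^2}$, interpolating with the estimates of Theorem~\ref{Thm-01}. The point is that $W^{2,r_n}\hookrightarrow L^\infty$, so $\|u\|_{L^\infty}\lesssim\langle s\rangle^{-\frac n2(\frac1m-\frac1{r_n})}$, while $\|u\|_{H^2}$ stays bounded. Both terms then decay like $\langle s\rangle^{-(q-1)\frac n2(\frac1m-\frac1{r_n})}$ times, for the $L^{2q}$ part, a power obtained from interpolation between $L^{r_n}$ and $L^\infty$. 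I expect this to be the main obstacle: one must verify that the resulting exponent exceeds $1$ under \eqref{Restriction-m-2D}, \eqref{Restriction-m-3D} and $q>2$. This should be exactly the integrability already exploited in the proof of Theorem~\ref{Thm-01} to obtain the uniform bound $\|v_t\|_{H^1}\lesssim\varepsilon$. Using the upper bound on $m$, the decay of $\|u\|_{L^q}^q$ or $\|u\|_{L^{2q}}^q$ exceeds $1$. If the pure $L^\infty$ route is not enough, I will instead reuse whatever space-time estimate of $|u|^q$ the paper used for the $v$-energy bound.

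Finally, I need the $L^2$ part of the first component, i.e.\ that $v_0^+\in L^2$ and $v-\mathcal{W}(t)(v_0^+,v_1^+)\to0$ in $L^2$. The difference at time $t$ equals $\int_t^\infty\frac{\sin((t-s)|\nabla|)}{|\nabla|}|u(s)|^q\,\mathrm{d}s$. Its low-frequency part is controlled, by Hausdorff--Young in $n=3$, through $\||u|^q\|_{L^1}$ and a bounded factor. In $n=2$, the bound $|\sin((t-s)|\xi|)/|\xi||\le |t-s|$ is too lossy, so I will instead use $\||\nabla|^{-1}f\|_{L^2}\lesssim\|f\|_{L^{1+}}$ for the Hardy--Littlewood--Sobolev endpoint avoidance, i.e.\ $\||\nabla|^{-1}f\|_{L^2(\mb R^2)}\lesssim\|f\|_{L^{1}\cap L^{2}}$ with the log-type correction replaced by an $L^{\frac{2n}{n+2}+\delta}$ bound. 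Since $|u|^q\in L^\sigma$ with $\sigma$ near $1$ requires $q\sigma\ge m$, which holds because $q>2>m$, integrability in time again follows from the $L^m$–$L^{r_n}$ decay. This gives Cauchy convergence of the tails in $L^2$ and completes the $H^2\times H^1$ scattering.
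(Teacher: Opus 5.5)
There is a genuine gap in the two-dimensional $L^2$ step. Your overall architecture is the same as the paper's: the asymptotic data $v_0-\int_0^\infty\frac{\sin(s|\nabla|)}{|\nabla|}|u(s,\cdot)|^q\,\mathrm{d}s$ and $v_1+\int_0^\infty\cos(s|\nabla|)|u(s,\cdot)|^q\,\mathrm{d}s$, with the difference at time $t$ given by tail integrals. Your three-dimensional low-frequency argument is also fine. The two-dimensional step, however, rests on a false inequality. By Plancherel, $\||\nabla|^{-1}f\|_{L^2(\mb{R}^2)}^2=\int_{\mb{R}^2}|\hat f(\xi)|^2|\xi|^{-2}\,\mathrm{d}\xi$, and this diverges logarithmically at $\xi=0$ whenever $\hat f(0)=\int_{\mb{R}^2}f\,\mathrm{d}x\neq0$. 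For $f=|u(s,\cdot)|^q\geqslant0$ with $f\not\equiv0$, this is exactly your situation. Scaling forces $\sigma=\frac{2n}{n+2}$ in any bound $\||\nabla|^{-1}f\|_{L^2}\lesssim\|f\|_{L^\sigma}$, which is the excluded endpoint $\sigma=1$ when $n=2$. So no $L^{1+\delta}$ or $L^1\cap L^2$ control can work.

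The loss comes from discarding the oscillating factor $\sin((t-s)|\xi|)$. If you keep it and split the radial integral at $\rho=\tau^{-1}$, Hausdorff--Young gives $\int_{|\xi|\leqslant1}\sin^2(\tau|\xi|)|\xi|^{-2}|\hat f(\xi)|^2\,\mathrm{d}\xi\lesssim\ln(\mathrm{e}+\tau)\|f\|_{L^1}^2$. That is, $\|W_1(\tau,|\nabla|)f\|_{H^2}\lesssim\Lambda_n(\tau)\|f\|_{H^1\cap L^1}$ as in \eqref{W1-energy-bound}. Using $\Lambda_n(s-t)\leqslant\Lambda_n(s)$ for $s\geqslant t$, the $H^2$ norm of the tail is bounded by $\int_t^\infty\Lambda_n(s)\||u(s,\cdot)|^q\|_{H^1\cap L^1}\,\mathrm{d}s$. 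The $\sqrt{\ln}$ growth is harmless precisely because the fourth inequality in \eqref{Time-integrability-condition} is strict. This is the paper's proof, and it makes your separate energy-space Cook argument unnecessary, since the full $H^2\times H^1$ norms of both tails are estimated at once.

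There is a second gap, in the time-integrability of $\||u|^q\|_{H^1}$, which you flagged but did not resolve. With $\|\nabla|u|^q\|_{L^2}\lesssim\|u\|_{L^\infty}^{q-1}\|\nabla u\|_{L^2}$, the factor $\|\nabla u\|_{L^2}$ is only bounded, not decaying. The resulting rate is $(q-1)\frac n2(\frac1m-\frac1{r_n})$, which can be smaller than $1$ for admissible parameters. For example, take $n=2$, $q=3$ (hence $r_2=3$) and $p$ large; then $m$ may be taken close to $\frac32$, and the rate is close to $\frac23$. The remedy is to place the gradient in $L^{2q}$. By $W^{2,r_n}\hookrightarrow W^{1,q}\cap W^{1,2q}$ from \eqref{Embeddings-u}, one has $\|\nabla|u|^q\|_{L^2}\lesssim\|u\|_{L^{2q}}^{q-1}\|\nabla u\|_{L^{2q}}$ and $\||u|^q\|_{L^1}=\|u\|_{L^q}^q$. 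Both decay like $\langle s\rangle^{-q\frac n2(\frac1m-\frac1{r_n})}$, which is \eqref{Nonlinear-u-source}, and this is what makes the weighted tail integral above finite.
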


\begin{remark}[Scattering only for the undamped component]
	The scattering statement concerns only the undamped component. In three space dimensions, the unweighted time-integrability of $|u|^q$ in $H^1\cap L^1$ is sufficient. In two space dimensions, the low-frequency growth of $W_1(t,|\nabla|)$ requires the slightly stronger weighted integrability with the factor $\sqrt{\ln(\mathrm{e}+t)}$, which is still available because the inequalities in \eqref{Time-integrability-condition} are strict. No analogous free scattering statement is asserted for the damped component.
\end{remark}

\subsection{Comments on the parameter assumptions}

 We collect some comments on the auxiliary exponent $m$, on the dimension-dependent target exponent $r_n$, and on the restrictions appearing in Theorem~\ref{Thm-01}.

\begin{remark}[The auxiliary exponent $m$]
	The intervals in \eqref{Restriction-m-2D} and \eqref{Restriction-m-3D} are non-empty under the assumptions of Theorem~\ref{Thm-01}.
	
	We first consider the two-dimensional case. If $2<q\leqslant4$, then $r_2=q$, and \eqref{Restriction-m-2D} reduces to
	\begin{align*}
		\max\left\{1,\frac{2}{p-2}\right\}<m<\frac{q}{2}.
	\end{align*}
	Thanks to $q>2$, one has $1<\frac{q}{2}$. Moreover, $\frac{2}{p-2}<\frac{q}{2}\Leftrightarrow p>2+\frac{4}{q}$. Hence, the interval is non-empty by the assumption $(p,q)\in\Omega_{\mathrm{Nakao},2}$. If $q>4$, then $r_2=4$ and $\frac{qr_2}{q+r_2}=\frac{4q}{q+4}>2$. Thus, \eqref{Restriction-m-2D} becomes
	\begin{align*}
		\max\left\{1,\frac{2}{p-2}\right\}<m<2,
	\end{align*}
	which is non-empty because $p>3$.
	
	We next consider the three-dimensional case. If $2<q\leqslant\frac{10}{3}$, then $r_3=q$, and \eqref{Restriction-m-3D} reduces to
	\begin{align*}
		\max\left\{1,\frac{2}{p-1},\frac{3q}{2q+3}\right\}<m<\frac{3q}{5}.
	\end{align*}
	Here, $1<\frac{3q}{5}$ and $\frac{3q}{2q+3}<\frac{3q}{5}$ are guaranteed by $q>2$, whereas $\frac{2}{p-1}<\frac{3q}{5}\Leftrightarrow p>1+\frac{10}{3q}$. Therefore, this interval is non-empty. If $q>\frac{10}{3}$, then $r_3=\frac{10}{3}$, and
	\eqref{Restriction-m-3D} becomes
	\begin{align*}
		\max\left\{1,\frac{2}{p-1},\frac{30}{29}\right\}<m<2,
	\end{align*}
	which is non-empty because $p>2$.
	
	Moreover, every admissible $m$ satisfies the following crucial inequalities for $n\in\{2,3\}$:
	\begin{align}\label{Time-integrability-condition}
		\frac{n}{2}\left(\frac{1}{m}-\frac{1}{r_n}\right)<1,\ \ \frac{n-1}{2}\left(p-\frac{2}{m}\right)>1, \ \
		\frac{n-1}{2}(p-1)>1,\ \ q\frac{n}{2}\left(\frac{1}{m}-\frac{1}{r_n}\right)>1.
	\end{align}
	Indeed, the parameter restrictions give the following implications.
	\begin{itemize}
		\item If $n=2$, the first inequality follows from $m>1$, the second from $m>\frac{2}{p-2}$, the third from $p>3$, and the fourth from $m<\frac{qr_2}{q+r_2}$.
		\item If $n=3$, the first inequality follows from $m>\frac{3r_3}{2r_3+3}$, the second from $m>\frac{2}{p-1}$, the third from $p>2$, and the fourth from $m<\frac{3qr_3}{3q+2r_3}$.
	\end{itemize}
	The four inequalities in \eqref{Time-integrability-condition} are used, respectively, to ensure that the polynomial kernel in the $L^m-L^{r_n}$ estimate for the damped wave component has decay order strictly smaller than one, that the $W^{2,m}$ norm of $|v|^p$ is time-integrable, that the $H^1$ and high-frequency norms of $|v|^p$ are time-integrable, and that the source term $|u|^q$ in the undamped wave equation is time-integrable. Again, due to the fact that the three decay exponents subject to time-integrability are strictly larger than $1$, the logarithmic factors appearing in the two-dimensional estimates do not affect the corresponding integrability properties.
\end{remark}

\begin{remark}[The dimension-dependent target exponent]
	The role of $r_n$ is different in the two dimensions.
	\begin{itemize}
		\item If $n=2$, we choose $r_2=\min\{q,4\}$. This allows $q$ to be arbitrarily large and guarantees the Sobolev-Bessel embedding $H^2\hookrightarrow W^{2-\frac{2}{r_2},r_2}$ required for the high-frequency part of the damped wave estimate.
		\item If $n=3$, we choose $r_3=\min\{q,\frac{10}{3}\}$. The bound $r_3\leqslant\frac{10}{3}$ guarantees the Sobolev-Bessel embedding $H^2\hookrightarrow W^{2-\frac{2}{r_3},r_3}$. When $q>\frac{10}{3}$, one has $r_3=\frac{10}{3}>3$ and hence the Sobolev-Morrey embedding $W^{2,r_3}\hookrightarrow W^{1,\infty}$ holds, which provides the $W^{1,q}$ and $W^{1,2q}$ bounds required for $|u|^q$. Thus, within the present $H^2$-based solution framework, the nonlinear estimates impose no additional upper restriction on $q$.
	\end{itemize}
\end{remark}

The shift between the two-dimensional and three-dimensional admissible ranges is illustrated in Figure~\ref{Fig-3D-2D-comparison}. In particular, the two-dimensional range is shifted toward larger powers because the Poisson formula yields only the decay rate $\langle t\rangle^{-\frac12}$ for the undamped component, whereas the three-dimensional Kirchhoff formula gives the stronger decay rate $\langle t\rangle^{-1}$. Because of $1+\frac{10}{3q}<\frac{8}{3}<3$ for all $q>2$, we claim the strict inclusion 
\begin{align*}
\Omega_{\mathrm{Nakao},2} \subsetneq \Omega_{\mathrm{Nakao},3}.
\end{align*}

\begin{figure}[!htbp]
	\centering
	\begin{tikzpicture}
		\begin{axis}[
			width=13.2cm,
			height=7.4cm,
			xmin=1, xmax=5.5,
			ymin=1, ymax=5,
			axis lines=left,
			axis line style={thick, -{Latex[length=3mm]}},
			xlabel={$p$},
			ylabel={$q$},
			xlabel style={at={(axis description cs:0.5,-0.09)}},
			ylabel style={at={(axis description cs:-0.07,0.5)},rotate=270,anchor=south},
			xtick={1,1.5,2,2.5,8/3,3,3.5,4,4.5,5,5.5},
			xticklabels={$1$,$ $,$2$,$ $,$\frac83$,$3$,$ $,$4$,$ $,$ $,$ $},
			ytick={1,2,3,10/3,4,5},
			yticklabels={$1$,$2$,$ $,$\frac{10}{3}$,$4$,$ $},
			tick style={black,thick},
			major tick length=4pt,
			clip=false,
			]
			
			\addplot[gray!55,dashed,thin,domain=1:5.5] {2};
			\addplot[gray!55,dashed,thin,domain=1:2] {10/3};
			\addplot[gray!55,dashed,thin,domain=1:3] {4};
			\addplot[gray!55,dashed,thin] coordinates {(2,1) (2,10/3)};
			\addplot[gray!55,dashed,thin] coordinates {(8/3,1) (8/3,2)};
			\addplot[gray!55,dashed,thin] coordinates {(3,1) (3,4)};
			\addplot[gray!55,dashed,thin] coordinates {(4,1) (4,2)};
			
			
			\addplot[name path=green top left,draw=none,domain=2:8/3] {5};
			\addplot[name path=green curve,draw=none,domain=2:8/3,samples=200] {10/(3*(x-1))};
			\addplot[name path=green top right,draw=none,domain=8/3:5.5] {5};
			\addplot[name path=green bottom,draw=none,domain=8/3:5.5] {2};
			\addplot[draw=none,fill=green!8,pattern=north east lines,pattern color=green!45!black]fill between[of=green top left and green curve];
			\addplot[draw=none,fill=green!8,pattern=north east lines,pattern color=green!45!black]fill between[of=green top right and green bottom];
			
			
			\addplot[name path=blue top left,draw=none,domain=3:4] {5};
			\addplot[name path=blue curve,draw=none,domain=3:4,samples=200] {4/(x-2)};
			\addplot[name path=blue top right,draw=none,domain=4:5.5] {5};
			\addplot[name path=blue bottom,draw=none,domain=4:5.5] {2};
			\addplot[draw=none,fill=blue!6,pattern=crosshatch,pattern color=blue!30]fill between[of=blue top left and blue curve];
			\addplot[draw=none,fill=blue!6,pattern=crosshatch,pattern color=blue!30]fill between[of=blue top right and blue bottom];
			
			
			\addplot[green!60!black,dashed,line width=1.1pt] coordinates {(2,10/3) (2,5)};

			\addplot[green!60!black,dashed,line width=1.1pt,domain=2:8/3,samples=200] {10/(3*(x-1))};
			
			\addplot[green!60!black,dashed,line width=1.1pt,domain=8/3:5.5] {2};
			
			
			\addplot[blue!80,dashed,line width=1.1pt] coordinates {(3,4) (3,5)};
			
			\addplot[blue!80,dashed,line width=1.1pt,domain=3:4,samples=200] {4/(x-2)};
			
			\addplot[blue!80,dashed,line width=1.1pt,domain=4:5.5] {2};
			
			\addplot[only marks,mark=o,mark size=2.3pt,color=green!60!black] coordinates {(2,10/3)(8/3,2)};
			\addplot[only marks,mark=o,mark size=2.3pt,color=blue!80] coordinates {(3,4)(4,2)};
			
			\node[green!45!black,font=\Large] at (axis cs:2.56,3.6){$\Omega_{\mathrm{Nakao},3}$};
			\node[blue!75!black,font=\Large] at (axis cs:4.46,3.6){$\Omega_{\mathrm{Nakao},2}$};
			
		\end{axis}
	\end{tikzpicture}
	\caption{Comparison of the admissible regions in 2D and 3D for global in-time existence.}
	\label{Fig-3D-2D-comparison}
\end{figure}

\subsection{Comparison with known blow-up ranges}
Figure~\ref{Fig-Range-2D} and Figure~\ref{Fig-Range-3D} compare the global in-time existence regions obtained in Theorem~\ref{Thm-01} with the known blow-up ranges from \cite{Wakasugi=2017,Chen-Reissig=2021,Kita-Kusaba=2022}, together with the critical curves for the wave-wave and damped-damped wave benchmark systems, in the $p-q$ plane. 

Figure~\ref{Fig-Range-3D} displays the exponent range obtained by the
present Sobolev argument. It is not intended to represent the full
currently known global existence range in three space dimensions.
The larger weighted pointwise region established in
\cite{Georgiev-Kita=2026} is described separately above.

Actually, these two figures reveal a common effect of the friction term $+u_t$ in Nakao's problem \eqref{Nakao-Problem}. In both dimensions $n\in\{2,3\}$, the global in-time existence region obtained in Theorem~\ref{Thm-01} is not confined to the expected global in-time existence range of the purely wave-wave benchmark system \eqref{Wave-System}. A non-empty portion of $\Omega_{\mathrm{Nakao},n}$ lies beyond the Strauss-type threshold $\Gamma_{\mathrm{Wave},n}(p,q)=0$. Thus, although the $v$-component remains undamped and retains its wave-like propagation, dissipation in the $u$-component already weakens the nonlinear feedback between the two equations and enlarges the range in which the coupled system can be controlled. This comparison does not identify the sharp critical curve for Nakao's problem, but it demonstrates that damping in only one component has a genuine stabilizing effect at least in both space dimensions considered here.

The region between the presently known blow-up and global in-time existence ranges remains open and is expected to contain the sharp threshold separating these two regimes.

\begin{figure}[!htbp]
	\centering
	\begin{tikzpicture}
		\pgfmathsetmacro{\qmax}{8}
		\pgfmathsetmacro{\ptop}{11/8}
		\pgfmathsetmacro{\pstar}{(3+sqrt(17))/2}
		\pgfmathsetmacro{\pglo}{4}
		\pgfmathsetmacro{\pdwleft}{5/4}
		\pgfmathsetmacro{\pwleft}{(21+sqrt(505))/16}
		
		\begin{axis}[
			width=0.90\textwidth,
			height=0.62\textwidth,
			xmin=1, xmax=5.5,
			ymin=1, ymax=\qmax,
			axis lines=left,
			xlabel={$p$},
			ylabel={$q$},
			ylabel style={rotate=270},
			xtick={1,2,3,4,5},
			ytick={1,2,3,4,5,6,7,8},
			clip=true,
			axis on top=true,
			tick label style={font=\small},
			label style={font=\small},
			]
			
			\addplot[name path=gupper, draw=none, domain=3:5.5, samples=2]{\qmax};
			\addplot[name path=glowera, draw=none, domain=3:\pglo, samples=200]{4/(x-2)};
			\addplot[name path=glowerb, draw=none, domain=\pglo:5.5, samples=2]{2};
			
			\addplot[name path=bottomone, draw=none, domain=1:5.5, samples=2]{1};
			\addplot[name path=blowcurveA, draw=none, domain=\ptop:2, samples=200]{3/(x-1)};
			\addplot[name path=blowcurveB, draw=none, domain=2:\pstar, samples=200]{5/x+2/(x*x)};
			\addplot[name path=blowcurveC, draw=none, domain=\pstar:5.5, samples=200]{1+2/x};
			
			\path[fill=red!18](axis cs:1,1) -- (axis cs:\ptop,1) -- (axis cs:\ptop,\qmax) -- (axis cs:1,\qmax) -- cycle;
			\addplot[draw=none,fill=red!18,]fill between[of=blowcurveA and bottomone,soft clip={domain=\ptop:2}];
			\addplot[draw=none,fill=red!18,]fill between[of=blowcurveB and bottomone,soft clip={domain=2:\pstar}];
			\addplot[draw=none,fill=red!18,]fill between[of=blowcurveC and bottomone,soft clip={domain=\pstar:5.5}];
			
			\addplot[draw=none,fill=blue!16,]fill between[of=gupper and glowera,soft clip={domain=3:\pglo}];
			\addplot[draw=none,fill=blue!16,]fill between[of=gupper and glowerb,soft clip={domain=\pglo:5.5}];
			
			\addplot[gray!45, thin, dashed, domain=1:5.5, samples=2]{x};
			\addplot[gray!45, thin, dashed, domain=1:5.5, samples=2]{2};
			\addplot[gray!45, thin, dashed]coordinates {(3,1) (3,\qmax)};
			\addplot[gray!45, thin, dashed]coordinates {(4,1) (4,\qmax)};
			
			\addplot[red!75!black, thick, domain=\ptop:2, samples=200]{3/(x-1)};
			\addplot[red!75!black, thick, domain=2:\pstar, samples=200]{5/x+2/(x*x)};
			\addplot[red!75!black, thick, domain=\pstar:5.5, samples=200]{1+2/x};
			
			\addplot[blue!75!black, thick, dashed, domain=3:\pglo, samples=200]{4/(x-2)};
			\addplot[blue!75!black, thick, dashed, domain=\pglo:5.5, samples=2]{2};
			\addplot[blue!75!black, thick, dashed] coordinates {(3,4) (3,\qmax)};
			\addplot[magenta!80!black,thick,dash dot,domain=\pdwleft:2,samples=200,]{2/(x-1)};
			\addplot[magenta!80!black,thick,dash dot,domain=2:5.5,samples=200,]{1+2/x};
			
			\addplot[green!50!black,thick,densely dotted,domain=\pwleft:\pstar,samples=200,]{(5+2/x)/(x-2)};
			\addplot[green!50!black,thick,densely dotted,domain=\pstar:5.5,samples=200,]{(2*x+5+sqrt((2*x+5)^2+8*x))/(2*x)};
			
			\node[font=\small, red!75!black, align=center] at (axis cs:1.45,2.50){\large Blow-up};
			\node[font=\small, blue!75!black, align=center] at (axis cs:4.25,5.60){\large Sobolev global in-time existence\\ \large in this paper};
			\node[font=\small, fill=white, inner sep=1.2pt, align=center] at (axis cs:2.3,4.55){\large Remaining open\\ \large region};
			
			\node[font=\scriptsize, red!75!black,anchor=west] at (axis cs:1.56,5.65){$q=\dfrac{3}{p-1}$};
			\node[font=\scriptsize, red!75!black,anchor=west] at (axis cs:2.66,2.34){$q=\dfrac{5}{p}+\dfrac{2}{p^2}$};
			\node[font=\scriptsize, red!75!black,anchor=west] at (axis cs:4.95,1.66){$q=1+\dfrac{2}{p}$};
			\node[font=\scriptsize, blue!75!black,anchor=west] at (axis cs:3.52,2.8){$q=\dfrac{4}{p-2}$};
			
			\node[font=\scriptsize\bfseries,text=magenta!80!black,fill=white,inner sep=1.2pt] at (axis cs:1.82,6.95){$\Gamma_{\mathrm{DWave},2}(p,q)=0$};
			\node[font=\scriptsize\bfseries,text=green!50!black,fill=white,inner sep=1.2pt] at (axis cs:3.86,4.39){$\Gamma_{\mathrm{Wave},2}(p,q)=0$};
			
			\node[font=\scriptsize, gray!60, anchor=south east] at (axis cs:5.18,4.18){$q=p$};
			
		\end{axis}
	\end{tikzpicture}
	\caption{Blow-up versus global in-time existence regions for Nakao's problem in 2D.}
	\label{Fig-Range-2D}
\end{figure}

\bigskip

\begin{figure}[!htbp]
	\centering
	\begin{tikzpicture}
		\pgfmathsetmacro{\pzero}{(3+sqrt(21))/6}
		\pgfmathsetmacro{\pstar}{(2+sqrt(10))/2}
		\pgfmathsetmacro{\pglo}{8/3}
		\pgfmathsetmacro{\ps}{1+sqrt(2)}
		\pgfmathsetmacro{\pdwleft}{13/12}
		\pgfmathsetmacro{\pwleft}{(7+sqrt(65))/8}
		
		\begin{axis}[
			width=0.90\textwidth,
			height=0.62\textwidth,
			xmin=1, xmax=4,
			ymin=1, ymax=3.75,
			axis lines=left,
			xlabel={$p$},
			ylabel={$q$},
			ylabel style={rotate=270},
			xtick={1,2,3},
			ytick={1,2,3},
			extra y ticks={10/3},
			extra y tick labels={$\frac{10}{3}$},
			clip=true,
			axis on top=true,
			tick label style={font=\small},
			label style={font=\small},
			]
			
			\addplot[name path=gupper,draw=none,domain=2:4,samples=2]{3.75};
			\addplot[name path=glowera,draw=none,domain=2:\pglo,samples=200]{10/(3*(x-1))};
			\addplot[name path=glowerb,draw=none,domain=\pglo:4,samples=2]{2};
			
			\addplot[name path=bottomone,draw=none,domain=1:4,samples=2]{1};
			\addplot[name path=blowcurveA,draw=none,domain=\pzero:\pstar,samples=200]{(3*x+1)/(x*x)};
			\addplot[name path=blowcurveB,draw=none,domain=\pstar:3,samples=200]{(2*x+5)/(3*x)};
			
			\path[fill=red!18](axis cs:1,1)--(axis cs:\pzero,1)--(axis cs:\pzero,3)--(axis cs:1,3)--cycle;
			\addplot[draw=none,fill=red!18]fill between[of=blowcurveA and bottomone,soft clip={domain=\pzero:\pstar}];
		    \addplot[draw=none,fill=red!18]fill between[of=blowcurveB and bottomone,soft clip={domain=\pstar:3}];
			
			\addplot[draw=none,fill=blue!16]fill between[of=gupper and glowera,soft clip={domain=2:\pglo}];
			\addplot[draw=none,fill=blue!16]fill between[of=gupper and glowerb,soft clip={domain=\pglo:4}];
			
			\addplot[gray!45,thin,dashed,domain=1:3.75,samples=2]{x};
			\addplot[gray!45,thin,dashed,domain=1:4,samples=2]{2};
			\addplot[gray!45,thin,dashed,domain=1:4,samples=2]{10/3};
			\addplot[gray!45,thin,dashed,domain=1:4,samples=2]{3};
			\addplot[gray!45,thin,dashed]coordinates {(2,1) (2,3.75)};
			\addplot[gray!45,thin,dashed]coordinates {(3,1) (3,3.75)};
			
			\addplot[red!75!black,thick,domain=1:\pzero,samples=2]{3};
			\addplot[red!75!black,thick,domain=\pzero:\pstar,samples=200]{(3*x+1)/(x*x)};
			\addplot[red!75!black,thick,domain=\pstar:3,samples=200]{(2*x+5)/(3*x)};
			
			\addplot[blue!75!black,thick,dashed]coordinates {(2,10/3) (2,3.75)};
			\addplot[blue!75!black,thick,dashed,domain=2:\pglo,samples=200]{10/(3*(x-1))};
			\addplot[blue!75!black,thick,dashed,domain=\pglo:4,samples=2]{2};
			
			\addplot[magenta!80!black,thick,dash dot,domain=\pdwleft:5/3,samples=200]{5/(3*x-2)};
			\addplot[magenta!80!black,thick,dash dot,domain=5/3:4,samples=200]{(2*x+5)/(3*x)};
			
			\addplot[green!50!black,thick,densely dotted,domain=\pwleft:\ps,samples=200]{(3*x+1)/(x*(x-1))};
			\addplot[green!50!black,thick,densely dotted,domain=\ps:4,samples=200]{(x+3+sqrt(x^2+10*x+9))/(2*x)};
			
			\node[font=\small,red!75!black]at (axis cs:1.30,1.65){\large Blow-up};
			\node[font=\small,blue!75!black,align=center]at (axis cs:3.15,2.44){\large Sobolev global in-time existence\\ \large in this paper};
			\node[font=\small,fill=white,inner sep=1.2pt]at (axis cs:3,1.73){\large Not covered by the Sobolev argument};
			
			\node[font=\scriptsize,red!75!black,anchor=west]at (axis cs:1.56,2.38){$q=\dfrac{3}{p}+\dfrac{1}{p^2}$};
			\node[font=\scriptsize,red!75!black,anchor=west]at (axis cs:2.57,1.4){$q=\dfrac{2p+5}{3p}$};
			\node[font=\scriptsize,blue!75!black,anchor=west]at (axis cs:2.22,2.88){$q=\dfrac{10}{3(p-1)}$};
			
			\node[font=\scriptsize\bfseries,text=magenta!80!black,fill=white,inner sep=1.2pt]at (axis cs:1.56,3.10){$\Gamma_{\mathrm{DWave},3}(p,q)=0$};
			\node[font=\scriptsize\bfseries,text=green!50!black,fill=white,inner sep=1.2pt]at (axis cs:2.36,3.52){$\Gamma_{\mathrm{Wave},3}(p,q)=0$};
			
			\node[font=\scriptsize,gray!60,anchor=south east]at (axis cs:3.47,3.47){$q=p$};
			
		\end{axis}
	\end{tikzpicture}
	\caption{Blow-up versus global in-time existence regions for Nakao's problem in 3D.}
	\label{Fig-Range-3D}
\end{figure}

\section{Preliminaries}
 This section provides the estimates for the linearized Cauchy problems required in the nonlinear argument. After recording an elementary time-convolution estimate, we establish diffusion-type $L^m-L^r$ upper bounds for the damped wave equation and derive dimension-dependent estimates for the free wave equation from the Poisson and Kirchhoff formulas. Particular attention is paid to the logarithmic low-frequency growth in two space dimensions and to the different $L^\infty$ decay rates in dimensions two and three.

\begin{lemma}[Time-convolution estimates]\label{Lemma-Time-Convolution}
	Let $0\leqslant a\leqslant1$, $b>1$ and $\gamma\geqslant0$. Then, the following estimates:
	\begin{align*}
		\int_0^t\langle t-\tau\rangle^{-a}\langle\tau\rangle^{-b}[\ln(\mathrm{e}+\tau)]^\gamma\,\mathrm{d}\tau&\lesssim\langle t\rangle^{-a},\\
		\int_0^t\mathrm{e}^{-c(t-\tau)}\langle\tau\rangle^{-b}[\ln(\mathrm{e}+\tau)]^\gamma\,\mathrm{d}\tau&\lesssim\langle t\rangle^{-b}[\ln(\mathrm{e}+t)]^\gamma,
	\end{align*}
	hold for all $t\geqslant0$. 
\end{lemma}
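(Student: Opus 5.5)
The plan is to split the interval of integration at $\tau=t/2$ and handle the two halves separately, treating the polynomial and exponential kernels in parallel. Throughout, I will use that $\langle\tau\rangle\approx\langle t\rangle$ on $[t/2,t]$, that $\ln(\mathrm{e}+\tau)\leqslant\ln(\mathrm{e}+t)$ for $\tau\leqslant t$, and that $\langle\tau\rangle^{-b}[\ln(\mathrm{e}+\tau)]^\gamma$ is integrable on $[0,\infty)$ because $b>1$.

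\emph{First estimate.} On $[0,t/2]$ we have $\langle t-\tau\rangle\geqslant\frac12\langle t\rangle$, so this piece is bounded by
\begin{align*}
\langle t\rangle^{-a}\int_0^\infty\langle\tau\rangle^{-b}[\ln(\mathrm{e}+\tau)]^\gamma\,\mathrm{d}\tau\lesssim\langle t\rangle^{-a}.
\end{align*}
On $[t/2,t]$ I will bound $\langle\tau\rangle^{-b}[\ln(\mathrm{e}+\tau)]^\gamma\lesssim\langle t\rangle^{-b}[\ln(\mathrm{e}+t)]^\gamma$. For $0\leqslant a<1$ the remaining integral satisfies $\int_{t/2}^t\langle t-\tau\rangle^{-a}\,\mathrm{d}\tau\lesssim\langle t\rangle^{1-a}$. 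In the endpoint case $a=1$ it is only $\lesssim\ln(\mathrm{e}+t)$.

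\emph{Main obstacle.} The only delicate point is absorbing these residual factors. In all cases the second piece is at most $\langle t\rangle^{1-a-b}[\ln(\mathrm{e}+t)]^{\gamma+1}$. Since $b>1$, we have $\langle t\rangle^{1-b}[\ln(\mathrm{e}+t)]^{\gamma+1}$ bounded uniformly in $t\geqslant0$, so the second piece is $\lesssim\langle t\rangle^{-a}$. This is precisely where the strict inequality $b>1$ is used: it lets a small power of $\langle t\rangle$ swallow the logarithms. For bounded $t$ nothing needs checking, since all quantities are comparable to constants.

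\emph{Second estimate.} On $[t/2,t]$, bound the weight by $\langle t\rangle^{-b}[\ln(\mathrm{e}+t)]^\gamma$ and use $\int_{t/2}^t\mathrm{e}^{-c(t-\tau)}\,\mathrm{d}\tau\leqslant 1/c$. On $[0,t/2]$, bound $\mathrm{e}^{-c(t-\tau)}\leqslant\mathrm{e}^{-ct/2}$ and the integral of the weight by a constant. Then $\mathrm{e}^{-ct/2}\lesssim\langle t\rangle^{-b}\leqslant\langle t\rangle^{-b}[\ln(\mathrm{e}+t)]^\gamma$, because $\ln(\mathrm{e}+t)\geqslant1$. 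Combining the two halves gives the claim.
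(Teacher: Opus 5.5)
Your proposal is correct and follows essentially the same route as the paper: split at $\tau=t/2$, use $\langle t-\tau\rangle\approx\langle t\rangle$ (respectively the exponential smallness $\mathrm{e}^{-ct/2}$) on $[0,t/2]$ and $\langle\tau\rangle\approx\langle t\rangle$ on $[t/2,t]$, with $b>1$ absorbing the logarithmic factors. Your single bound $\langle t\rangle^{1-a-b}[\ln(\mathrm{e}+t)]^{\gamma+1}$ for both $0\leqslant a<1$ and $a=1$ just repackages the paper's two endpoint cases.
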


\begin{proof}
	The estimates are immediate for bounded $t$. For $t\geqslant2$, we split each integral at $\frac{t}{2}$. From $\langle t-\tau\rangle\approx\langle t\rangle$ on $[0,\frac{t}{2}]$ and $b>1$, we have
	\begin{align*}
		\int_0^{\frac{t}{2}}\langle t-\tau\rangle^{-a}\langle\tau\rangle^{-b}[\ln(\mathrm{e}+\tau)]^\gamma\,\mathrm{d}\tau\lesssim\langle t\rangle^{-a}.
	\end{align*}
	On $[\frac{t}{2},t]$, we use $\langle\tau\rangle\approx\langle t\rangle$ and integrate the remaining kernel. The resulting bound is
	\begin{align*}
		\langle t\rangle^{1-a-b}[\ln(\mathrm{e}+t)]^\gamma\lesssim\langle t\rangle^{-a}\ \ &\mbox{if}\ \ 0\leqslant a<1,\\
		\langle t\rangle^{-b}[\ln(\mathrm{e}+t)]^{1+\gamma}\lesssim\langle t\rangle^{-a}\ \ &\mbox{if}\ \ a=1,
	\end{align*}
	in which the last estimate follows from $b>1$. This proves the first assertion. The second one follows from the same decomposition, using the exponential decay on $[0,\frac{t}{2}]$ and $\langle\tau\rangle\approx\langle t\rangle$ together with $\int_0^\infty\mathrm{e}^{-cs}\,\mathrm{d}s<\infty$ on $[\frac{t}{2},t]$.
\end{proof}

\subsection{Estimates for the linear damped wave equation}

 We focus on the following linear damped wave equation:
\begin{align}\label{Damped-Wave}
	\begin{cases}
		\phi_{tt}-\Delta\phi+\phi_t=F(t,x),&x\in\mb{R}^n,\ t>0,\\
		(\phi,\phi_t)(0,x)=(\phi_0,\phi_1)(x),&x\in\mb{R}^n,
	\end{cases}
\end{align}
which is the inhomogeneous version of \eqref{Nakao-Problem}$_1$.

\begin{lemma}[Damped wave estimates]\label{Lemma-damped-wave}
	Let $n\in\{2,3\}$, $1<p_0<2<q_0<\infty$ and $s_0\in\{0,1\}$. Set
	\begin{align*}
		\beta_n(q_0):=(n-1)\left(\frac12-\frac1{q_0}\right).
	\end{align*}
	Then, the solution to the inhomogeneous damped wave equation \eqref{Damped-Wave} satisfies
	\begin{align*}
		\|\phi(t,\cdot)\|_{W^{s_0+1,q_0}}&\lesssim\langle t\rangle^{-\frac n2\left(\frac1{p_0}-\frac1{q_0}\right)}
		\|(\phi_0,\phi_1)\|_{W^{s_0+1,p_0}\times W^{s_0+1,p_0}}\\[0.3em]
        &\quad\ +\mathrm{e}^{-ct}\|(\phi_0,\phi_1)\|_{W^{s_0+1+\beta_n(q_0),q_0}\times W^{s_0+\beta_n(q_0),q_0}}\\[0.3em]
		&\quad\ +\int_0^t\langle t-\tau\rangle^{-\frac n2\left(\frac1{p_0}-\frac1{q_0}\right)}\|F(\tau,\cdot)\|_{W^{s_0+1,p_0}}\,\mathrm{d}\tau\\
		&\quad\ +\int_0^t\mathrm{e}^{-c(t-\tau)}\|F(\tau,\cdot)\|_{W^{s_0+\beta_n(q_0),q_0}}\,\mathrm{d}\tau,
	\end{align*}
and
\begin{align*}
	\|\phi(t,\cdot)\|_{H^{s_0+1}}&\lesssim\|(\phi_0,\phi_1)\|_{H^{s_0+1}\times H^{s_0}}+\int_0^t\|F(\tau,\cdot)\|_{H^{s_0}}\,\mathrm{d}\tau,\\
	\|\phi_t(t,\cdot)\|_{H^{s_0}}&\lesssim\langle t\rangle^{-1}\|(\phi_0,\phi_1)\|_{H^{s_0+1}\times H^{s_0}}+\int_0^t\langle t-\tau\rangle^{-1}\|F(\tau,\cdot)\|_{H^{s_0}}\,\mathrm{d}\tau,
\end{align*}
for all $t\geqslant 0$.
\end{lemma}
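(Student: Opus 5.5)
We argue on the Fourier side. Write the solution of \eqref{Damped-Wave} as
\begin{align*}
\phi(t,x)=K_0(t,|\nabla|)\phi_0+K_1(t,|\nabla|)(\phi_0+\phi_1)+\int_0^tK_1(t-\tau,|\nabla|)F(\tau,\cdot)\,\mathrm{d}\tau,
\end{align*}
where $\widehat{K_1}(t,\xi)=\mathrm{e}^{-t/2}\sinh(t\sqrt{1/4-|\xi|^2})/\sqrt{1/4-|\xi|^2}$ and $\widehat{K_0}=\partial_t\widehat{K_1}$. By Duhamel's principle, the inhomogeneous terms are the linear estimates applied to data $(0,F(\tau))$ at time $t-\tau$. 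So it suffices to prove the homogeneous bounds with $\phi_0=0$, $\phi_1=F(\tau)$, together with the corresponding bounds for $K_0$. We split frequencies with a smooth cutoff $\chi_{\mathrm{low}}(\xi)$ supported in $|\xi|\leqslant 1/4$ and set $\chi_{\mathrm{high}}=1-\chi_{\mathrm{low}}$.

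\textbf{Low frequencies.} For $|\xi|\leqslant1/4$ we decompose $\widehat{K_j}=\mathrm{e}^{-t|\xi|^2}\times(\text{symbol of order }0)+\mathrm{e}^{-ct}\times(\text{smooth symbol})$, using $\sqrt{1/4-|\xi|^2}-1/2=-|\xi|^2+O(|\xi|^4)$. The heat-like part gives $\|\chi_{\mathrm{low}}K_j(t)f\|_{L^{q_0}}\lesssim\langle t\rangle^{-\frac n2(\frac1{p_0}-\frac1{q_0})}\|f\|_{L^{p_0}}$. To see this, write $\mathrm{e}^{-t|\xi|^2}$ times a Mikhlin multiplier that is uniformly bounded in $t$; the bound $\|\nabla^k\partial_\xi^\alpha\widehat{K_j}\|\lesssim|\xi|^{-|\alpha|}$ uniformly in $t$ follows from scaling. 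Then use the $L^{p_0}$–$L^{q_0}$ heat estimate, and for bounded $t$ use Hausdorff–Young/Bernstein, since the frequencies are compactly supported. Because the cutoff is localized, derivatives up to order $s_0+1$ cost nothing, so the right-hand side may be measured in $W^{s_0+1,p_0}$ or just $L^{p_0}$. The exponentially decaying remainder is absorbed into the same bound.

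\textbf{High frequencies.} For $|\xi|\geqslant1/4$ we have $\sqrt{1/4-|\xi|^2}=i\sqrt{|\xi|^2-1/4}$, so
\begin{align*}
\widehat{K_1}=\mathrm{e}^{-t/2}\frac{\sin(t\langle\xi\rangle_*)}{\langle\xi\rangle_*},\qquad \langle\xi\rangle_*:=\sqrt{|\xi|^2-1/4},
\end{align*}
a wave propagator damped by $\mathrm{e}^{-t/2}$. We use the $L^{q_0}$–$L^{q_0}$ estimate for the wave propagators $\mathrm{e}^{\pm it\langle\xi\rangle_*}$ (Peral/Miyachi type). This loses $(n-1)|\frac12-\frac1{q_0}|=\beta_n(q_0)$ derivatives, with at most polynomial growth in $t$, and that growth is absorbed by $\mathrm{e}^{-t/2}$. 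This yields
\begin{align*}
\|\chi_{\mathrm{high}}K_1(t)f\|_{W^{s_0+1,q_0}}\lesssim\mathrm{e}^{-ct}\|f\|_{W^{s_0+\beta_n(q_0),q_0}}.
\end{align*}
Similarly, $K_0$ costs one more derivative, giving $W^{s_0+1+\beta_n(q_0),q_0}$ for $\phi_0$. The symbol $\langle\xi\rangle_*$ differs from $|\xi|$ by a classical symbol of order $-1$, so the Seeger–Sogge–Stein/Miyachi estimate for Fourier integral operators applies. \textbf{This step is the main obstacle.} The point is to cite a precise $L^{q_0}$ estimate for $\mathrm{e}^{it\sqrt{|\xi|^2-1/4}}$ on high frequencies with a constant that grows only polynomially in $t$. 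We plan to factor the propagator as $\mathrm{e}^{it|\xi|}$ times a bounded symbol $\mathrm{e}^{it(\langle\xi\rangle_*-|\xi|)}$ and handle the error as a smoothing perturbation, checking the Mikhlin condition with polynomial $t$-dependence.

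\textbf{$L^2$ estimates.} These follow from Plancherel and pointwise bounds on the multipliers. For all $\xi$ we have $|\widehat{K_1}|\lesssim\langle\xi\rangle^{-1}$ and $|\widehat{K_0}|\lesssim1$, which together with Duhamel gives the $H^{s_0+1}$ bound. For $\phi_t$, the multipliers $\partial_t\widehat{K_1}$ and $\partial_t\widehat{K_0}$ have low-frequency part $\approx|\xi|^2\mathrm{e}^{-t|\xi|^2}+\mathrm{e}^{-ct}$ and are therefore bounded by $\langle t\rangle^{-1}$. For $\partial_t\widehat{K_0}$ we use the factor $|\xi|$, controlled by $\|\phi_0\|_{H^{s_0+1}}$, together with $|\xi|\mathrm{e}^{-t|\xi|^2}\lesssim t^{-1/2}$; this part needs slight care and can alternatively use $|\xi|^2$ with one more derivative. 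At high frequency the multipliers are $\lesssim\mathrm{e}^{-ct}$. Applying these bounds to data $(0,F(\tau))$ at time $t-\tau$ yields the integral term with kernel $\langle t-\tau\rangle^{-1}$.
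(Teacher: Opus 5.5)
Your route is the paper's route made explicit. The paper quotes \cite[Theorem~1.1]{Ikeda-Inui-Okamoto-Wakasugi=2019} for the $L^{p_0}$--$L^{q_0}$ bound: heat-type low frequencies, and high frequencies with a loss of $\beta_n(q_0)$ derivatives and a polynomial factor absorbed into $\mathrm{e}^{-ct}$. It then reads off the $L^2$ bounds from pointwise multiplier estimates. You reprove that cited result yourself. As written, however, your high-frequency step fails on part of its support.

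With $\chi_{\mathrm{low}}$ supported in $|\xi|\leqslant\frac14$, the support of $\chi_{\mathrm{high}}=1-\chi_{\mathrm{low}}$ contains the annulus $\frac14\leqslant|\xi|\leqslant\frac12$. There $\frac14-|\xi|^2\geqslant0$, so your identity $\sqrt{1/4-|\xi|^2}=i\sqrt{|\xi|^2-1/4}$ does not apply; it holds only for $|\xi|>\frac12$. On this annulus the characteristic roots $-\frac12\pm\sqrt{1/4-|\xi|^2}$ are real, and they coincide at $|\xi|=\frac12$. Consequently $\sqrt{|\xi|^2-1/4}$ is imaginary or zero there and is not smooth at $|\xi|=\frac12$. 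The split pieces $\mathrm{e}^{\pm it\sqrt{|\xi|^2-1/4}}/\sqrt{|\xi|^2-1/4}$ are therefore singular at $|\xi|=\frac12$ and are real exponentials rather than oscillatory factors. So neither your factorization through $\mathrm{e}^{it|\xi|}$ nor the Peral--Miyachi/Seeger--Sogge--Stein estimate can be applied on this zone.

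The repair is the usual three-zone splitting $|\xi|\leqslant\varepsilon$, $\varepsilon\leqslant|\xi|\leqslant N$, $|\xi|\geqslant N$ with $N>\frac12$.

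\emph{Middle zone.} Do not split into exponentials there. The full symbol $\mathrm{e}^{-t/2}\sinh(t\mu)/\mu$, with $\mu=\sqrt{1/4-|\xi|^2}$, is an entire function of $\mu^2$, hence smooth in $\xi$. It and its $\xi$-derivatives are $\lesssim\langle t\rangle^{k}\mathrm{e}^{-(\frac12-\sqrt{1/4-\varepsilon^2})t}$. The kernel of this compactly supported symbol therefore has $L^1$ norm $\lesssim\mathrm{e}^{-ct}$. Its contribution $\mathrm{e}^{-ct}\|f\|_{L^{q_0}}$ fits into the exponentially decaying terms, and the same holds for $\widehat{K_0}$.

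\emph{High zone.} Your FIO argument, with $\sqrt{|\xi|^2-1/4}-|\xi|$ of order $-1$ and Mikhlin constants polynomial in $t$, is needed and valid only on $|\xi|\geqslant N$.

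A smaller point in the $L^2$ part: you put the extra derivative on $\phi_0$ in the wrong zone. For $\partial_t\widehat{K_0}=\partial_t^2\widehat{K_1}$, the low-frequency part is $\lesssim|\xi|^4\mathrm{e}^{-ct|\xi|^2}+\mathrm{e}^{-ct}\lesssim\langle t\rangle^{-2}$, with no derivative needed. At high frequency, $\partial_t^2\widehat{K_1}=-\partial_t\widehat{K_1}-|\xi|^2\widehat{K_1}$ has size $\mathrm{e}^{-t/2}\langle\xi\rangle$, not $\mathrm{e}^{-ct}$ as you state. That is where $\|\phi_0\|_{H^{s_0+1}}$ enters. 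The bound $|\xi|\mathrm{e}^{-t|\xi|^2}\lesssim t^{-1/2}$ you mention would only give $\langle t\rangle^{-1/2}$ and should be dropped. With these corrections your proof goes through.
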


\begin{proof}
	Let $D(t)$ be the operator used in \cite[Theorem~1.1]{Ikeda-Inui-Okamoto-Wakasugi=2019}. The homogeneous solution is represented via
	\begin{align*}
		\phi^{\mathrm{hom}}(t,x)=D(t)\big(\phi_0(x)+\phi_1(x)\big)+\partial_tD(t)\phi_0(x).
	\end{align*}
	In the notation of that theorem, the output exponent is $q_0$, the input exponent is $p_0$ and
	\begin{align*}
		\beta=(n-1)\left|\frac12-\frac1{q_0}\right|=\beta_n(q_0).
	\end{align*}
	We apply its estimate for $D(t)$ and $\partial_tD(t)$ with $s_1=s_2=s_0+1$. The low-frequency terms give the decay factor 
	\begin{align*}
		\langle t\rangle^{-\frac n2\left(\frac1{p_0}-\frac1{q_0}\right)}.
	\end{align*} For the high-frequency terms, the estimate for $D(t)$ requires $s_0+\beta_n(q_0)$ derivatives of the datum, while the estimate for $\partial_tD(t)$ requires $s_0+1+\beta_n(q_0)$ derivatives. The polynomial factor accompanying the exponential decay in \cite[Theorem~1.1]{Ikeda-Inui-Okamoto-Wakasugi=2019} is absorbed into $\mathrm{e}^{-ct}$. Duhamel's principle suggests
	\begin{align*}
		\phi(t,x)=D(t)\big(\phi_0(x)+\phi_1(x)\big)+\partial_tD(t)\phi_0(x)+\int_0^tD(t-\tau)F(\tau,x)\,\mathrm{d}\tau.
	\end{align*}
	Applying the estimate for $D(t-\tau)$ to $F(\tau,\cdot)$ and using Minkowski's inequality yields the two Duhamel terms in the first estimate. 
	
	The remaining estimates follow from the standard $L^2$-based
	bounds for the damped wave propagators. More precisely, for
	$s_0\in\{0,1\}$, we are able to derive
	\begin{align*}
		\|D(t)f(\cdot)\|_{H^{s_0+1}}&\lesssim \|f\|_{H^{s_0}},\\
		\|\partial_tD(t)f(\cdot)\|_{H^{s_0}}&\lesssim \langle t\rangle^{-1}\|f\|_{H^{s_0}},\\
		\|\partial_tD(t)f(\cdot)\|_{H^{s_0+1}}&\lesssim \|f\|_{H^{s_0+1}},\\
		\|\partial_t^2D(t)f(\cdot)\|_{H^{s_0}}&\lesssim \langle t\rangle^{-1}\|f\|_{H^{s_0+1}}.
	\end{align*}
	Applying these estimates to the homogeneous representation and the
	Duhamel term, and then using Minkowski's inequality, gives the second
	and third assertions.
\end{proof}

\subsection{Linear wave estimates via the Poisson and Kirchhoff formulas}

 We next consider the following linear wave equation:
\begin{align}\label{Wave}
	\begin{cases}
		\varphi_{tt}-\Delta\varphi=G(t,x),&x\in\mb{R}^n,\ t>0,\\
		(\varphi,\varphi_t)(0,x)=(\varphi_0,\varphi_1)(x),&x\in\mb{R}^n,
	\end{cases}
\end{align}
which is the inhomogeneous version of \eqref{Nakao-Problem}$_2$.

\begin{lemma}[Spherical trace estimate]\label{Lemma-Trace}
	Let $n\in\{2,3\}$ and $h\in W^{1,1}$. Then, there exists a constant $C>0$, independent of $x\in\mb{R}^n$ and $R\geqslant1$, such that
	\begin{align*}
		\int_{|x-y|=R}|h(y)|\,\mathrm{d}S_y\leqslant C\|h\|_{W^{1,1}}.
	\end{align*}
\end{lemma}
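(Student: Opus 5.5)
We reduce to the case $x=0$ by translation invariance, since $\|h(\cdot+x)\|_{W^{1,1}}=\|h\|_{W^{1,1}}$. It then suffices to prove the trace inequality
\begin{align*}
	\int_{|y|=R}|h(y)|\,\mathrm{d}S_y\leqslant C\|h\|_{W^{1,1}}
\end{align*}
uniformly in $R\geqslant1$. We first prove it for $h\in\mathcal{C}_c^\infty$ and then extend to $W^{1,1}$ by density. For the extension, take $h_k\to h$ in $W^{1,1}$. The inequality applied to $h_k-h_j$ shows that the traces converge in $L^1(\{|y|=R\})$, and the limit coincides with $h$ for a.e. radius $R$, or with its trace. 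We interpret the statement accordingly, as for any $W^{1,1}$ trace.

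For smooth compactly supported $h$ and a fixed unit vector $\omega\in\mathbb{S}^{n-1}$, we use the fundamental theorem of calculus along the ray:
\begin{align*}
	|h(R\omega)|\leqslant\int_R^\infty|\partial_\rho h(\rho\omega)|\,\mathrm{d}\rho\leqslant\int_R^\infty|\nabla h(\rho\omega)|\,\mathrm{d}\rho.
\end{align*}
We multiply by $R^{n-1}$ and use $R^{n-1}\leqslant\rho^{n-1}$ for $\rho\geqslant R$. Integrating over $\omega\in\mathbb{S}^{n-1}$ then gives
\begin{align*}
	\int_{|y|=R}|h(y)|\,\mathrm{d}S_y=R^{n-1}\int_{\mathbb{S}^{n-1}}|h(R\omega)|\,\mathrm{d}\omega\leqslant\int_{\mathbb{S}^{n-1}}\int_R^\infty|\nabla h(\rho\omega)|\rho^{n-1}\,\mathrm{d}\rho\,\mathrm{d}\omega\leqslant\|\nabla h\|_{L^1}.
\end{align*}
This already proves the claim with $C=1$, and it even holds for every $R>0$. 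The restriction $R\geqslant1$ is therefore not needed, and the constant is independent of $x$ and $R$. An alternative route, which would also handle the inner direction, applies the divergence theorem on $\{|y|>R\}$ to $|h|\,y/|y|$ and uses $\operatorname{div}(y/|y|)=(n-1)/|y|\geqslant0$. Both computations are elementary.

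The only point requiring care is justifying the ray argument for general $W^{1,1}$ functions. There we need the pointwise fundamental theorem of calculus, which is why we work first with $\mathcal{C}_c^\infty$ functions and then pass to the limit using the uniform bound. I expect no genuine obstacle beyond this routine density step. The dimension restriction $n\in\{2,3\}$ plays no role, and the estimate holds in every dimension.
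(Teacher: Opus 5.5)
Your proof is correct. It uses the same mechanism as the paper's: the fundamental theorem of calculus along rays from the center, the bound $R^{n-1}\leqslant\rho^{n-1}$ for $\rho\geqslant R$, and then translation and density. The difference is how you close off the ray integral.

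The paper does not use decay along the ray. It compares $h(R\omega)$ with $h(r\omega)$ for $r\in[R,R+1]$ and averages over $r$. This bounds the spherical integral by $\int_{R\leqslant|y|\leqslant R+1}(|h|+|\nabla h|)\,\mathrm{d}y$, which is why the full $W^{1,1}$ norm appears. You instead integrate all the way to infinity, using the compact support of the approximants, and obtain $\int_{|y|\geqslant R}|\nabla h|\,\mathrm{d}y\leqslant\|\nabla h\|_{L^1}$ with $C=1$, for every $R>0$ and every dimension.

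What each version buys:
\begin{itemize}
\item Your bound is sharper and scale-invariant, since both sides scale like $\lambda^{n-1}$ under $h\mapsto h(\cdot/\lambda)$. It shows at once that neither $R\geqslant1$ nor $n\in\{2,3\}$ matters.
\item The paper's shell argument is local: for smooth $h$ it only needs $h$ on the adjacent unit shell.
\item The paper's argument also works for all $R>0$, because $R^{n-1}\leqslant r^{n-1}$ on $[R,R+1]$ for any $R>0$. So the hypothesis $R\geqslant1$ is superfluous there too.
\end{itemize}

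For the uses of the lemma in the Poisson and Kirchhoff estimates, the two versions are interchangeable. Your remark on interpreting the restriction of a $W^{1,1}$ function to a sphere — as a trace that agrees with $h$ on almost every sphere — is, if anything, more careful than the paper's one-line density step.
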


\begin{proof}
	Let us first assume that $h\in\ml{C}_0^1$ and $x=0$. Writing $y=R\omega$ with $\omega\in\mb{S}^{n-1}$, one has $\mathrm{d}S_y=R^{n-1}\,\mathrm{d}\omega$. For every $r\in[R,R+1]$, the fundamental theorem of calculus gives
	\begin{align*}
		|h(R\omega)|&\leqslant |h(r\omega)|+\int_R^r|\omega\cdot\nabla h(\rho\,\omega)|\,\mathrm{d}\rho\\
		&\leqslant |h(r\omega)|+\int_R^{R+1}|\nabla h(\rho\,\omega)|\,\mathrm{d}\rho.
	\end{align*}
	Integrating this inequality with respect to $r\in[R,R+1]$, multiplying it by $R^{n-1}$ and using $R^{n-1}\leqslant r^{n-1}$ for $r\in[R,R+1]$, we obtain
	\begin{align}\label{Trace-at-origin}
		\int_{|y|=R}|h(y)|\,\mathrm{d}S_y&=R^{n-1}\int_{\mb{S}^{n-1}}|h(R\omega)|\,\mathrm{d}\omega\notag\\
		&\lesssim\int_{\mb{S}^{n-1}}\int_R^{R+1}\big(|h(r\omega)|+|\nabla h(r\omega)|\big)r^{n-1}\,\mathrm{d}r\,\mathrm{d}\omega\notag\\[0.3em]
		&\lesssim\|h\|_{L^1}+\|\nabla h\|_{L^1}.
	\end{align}
	For a general center $x\in\mb{R}^n$, let $h_x(z):=h(x+z)$. Applying \eqref{Trace-at-origin} to $h_x$ gives
	\begin{align*}
		\int_{|x-y|=R}|h(y)|\,\mathrm{d}S_y&=\int_{|z|=R}|h_x(z)|\,\mathrm{d}S_z\\[0.3em]
		&\leqslant C\|h_x\|_{W^{1,1}}=C\|h\|_{W^{1,1}}.
	\end{align*}
	Finally, the assertion for $h\in W^{1,1}$ follows from the density of $\ml{C}_0^\infty$ in $W^{1,1}$. The proof now is complete.
\end{proof}

\begin{lemma}[Wave estimates]\label{Lemma-Wave}
	Let $n\in\{2,3\}$ and $s_1\in\{0,1\}$. Then, the solution to the inhomogeneous linear wave equation \eqref{Wave} satisfies
	\begin{align}\label{Wave-Linfty}
		\|\varphi(t,\cdot)\|_{L^\infty}&\lesssim\langle t\rangle^{-\frac{n-1}{2}}\|(\varphi_0,\varphi_1)\|_{(W^{2,1}\cap W^{1,\infty})\times(W^{1,1}\cap L^\infty)}\notag\\[0.3em]
		&\quad\ +\int_0^t\langle t-\tau\rangle^{-\frac{n-1}{2}}\|G(\tau,\cdot)\|_{W^{1,1}\cap L^\infty}\,\mathrm{d}\tau,
		\end{align}
		and
		\begin{align}
		\|\varphi(t,\cdot)\|_{H^{s_1+1}}&\lesssim\Lambda_n(t)\|(\varphi_0,\varphi_1)\|_{H^{s_1+1}\times(H^{s_1}\cap L^1)}+\int_0^t\Lambda_n(t-\tau)\|G(\tau,\cdot)\|_{H^{s_1}\cap L^1}\,\mathrm{d}\tau,	\label{Wave-Energy}\\
		\label{Wave-Time-Derivative}
		\|\varphi_t(t,\cdot)\|_{H^{s_1}}&\lesssim\|(\varphi_0,\varphi_1)\|_{H^{s_1+1}\times H^{s_1}}+\int_0^t\|G(\tau,\cdot)\|_{H^{s_1}}\,\mathrm{d}\tau,
	\end{align}
	for all $t\geqslant0$.
\end{lemma}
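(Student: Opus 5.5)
By Duhamel's principle we write
\begin{align*}
\varphi(t,x)=\partial_tW_1(t,|\nabla|)\varphi_0(x)+W_1(t,|\nabla|)\varphi_1(x)+\int_0^tW_1(t-\tau,|\nabla|)G(\tau,x)\,\mathrm{d}\tau,
\end{align*}
with $W_1(t,|\nabla|)=\frac{\sin(t|\nabla|)}{|\nabla|}$. The plan is to prove each of the three estimates first for the homogeneous propagators, and then to obtain the Duhamel terms from Minkowski's inequality, with time shifted to $t-\tau$. So it suffices to show the following bounds:
\begin{align*}
\|W_1(t,|\nabla|)g\|_{L^\infty}&\lesssim\langle t\rangle^{-\frac{n-1}{2}}\|g\|_{W^{1,1}\cap L^\infty},\\
\|\partial_tW_1(t,|\nabla|)f\|_{L^\infty}&\lesssim\langle t\rangle^{-\frac{n-1}{2}}\|f\|_{W^{2,1}\cap W^{1,\infty}},\\
\|W_1(t,|\nabla|)g\|_{H^{s_1+1}}&\lesssim\Lambda_n(t)\|g\|_{H^{s_1}\cap L^1},
\end{align*}
together with the energy bounds for $\cos(t|\nabla|)$ and for $\partial_t$ of both propagators.

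\textbf{$L^\infty$ estimates.} For $t\leqslant1$ the bounds follow directly from the Kirchhoff/Poisson formulas: the kernels have bounded mass, so $\|W_1(t)g\|_{L^\infty}\lesssim t\|g\|_{L^\infty}$, and the time derivative costs one derivative of $f$ in $L^\infty$.

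For $n=3$ and $t\geqslant1$, Kirchhoff's formula gives
\begin{align*}
W_1(t)g(x)=\frac{1}{4\pi t}\int_{|x-y|=t}g(y)\,\mathrm{d}S_y,
\end{align*}
and Lemma~\ref{Lemma-Trace} with $R=t$ yields the rate $t^{-1}\|g\|_{W^{1,1}}$. For $\partial_tW_1(t)f$, differentiating the spherical mean produces two types of terms. One is $t^{-1}$ times a spherical integral of $f$. The other is $\frac{1}{4\pi t}\int_{|x-y|=t}\omega\cdot\nabla f\,\mathrm{d}S_y$. Lemma~\ref{Lemma-Trace} applies to both, using $f\in W^{1,1}$ for the first and $\nabla f\in W^{1,1}$ (i.e.\ $f\in W^{2,1}$) for the second.

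For $n=2$ and $t\geqslant1$, Poisson's formula reads
\begin{align*}
W_1(t)g(x)=\frac{1}{2\pi}\int_{|x-y|<t}\frac{g(y)}{\sqrt{t^2-|x-y|^2}}\,\mathrm{d}y.
\end{align*}
Write this in polar coordinates as $\frac{1}{2\pi}\int_0^t(t^2-\rho^2)^{-1/2}M(\rho)\,\mathrm{d}\rho$, where $M(\rho)=\int_{|x-y|=\rho}|g|\,\mathrm{d}S$. Split the radial range into three pieces. On $\rho\leqslant1$, use $\|g\|_{L^\infty}$ and $(t^2-\rho^2)^{-1/2}\lesssim t^{-1}$. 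On $1\leqslant\rho\leqslant t/2$, the kernel is $\lesssim t^{-1}$ and $\int M\leqslant\|g\|_{L^1}$, giving $t^{-1}$. On $t/2\leqslant\rho\leqslant t$, Lemma~\ref{Lemma-Trace} gives $M(\rho)\lesssim\|g\|_{W^{1,1}}$, and
\begin{align*}
\int_{t/2}^t(t^2-\rho^2)^{-1/2}\,\mathrm{d}\rho\lesssim t^{-1/2}\int_{t/2}^t(t-\rho)^{-1/2}\,\mathrm{d}\rho\lesssim 1.
\end{align*}
This last piece yields only $O(1)$, not $t^{-1/2}$, so it is the main obstacle. To recover the decay, I would first change variables $y=x+tz$ with $|z|<1$. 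Then
\begin{align*}
W_1(t)g(x)=\frac{t}{2\pi}\int_{|z|<1}\frac{g(x+tz)}{\sqrt{1-|z|^2}}\,\mathrm{d}z,
\end{align*}
and the trace argument is applied on spheres of radius $t\rho'$. Near the boundary $\rho'\in[\frac12,1]$ the total is bounded by
\begin{align*}
\int_{1/2}^1(1-\rho')^{-1/2}\,t\,\frac{M(t\rho')}{t\rho'}\,\mathrm{d}\rho'.
\end{align*}
Using the trace bound $M\lesssim\|g\|_{W^{1,1}}$ this is only $O(1)$. The sharper tool is a trace bound averaged over a unit shell, $\int_{R}^{R+1}M\leqslant\|g\|_{L^1}$, combined with Hölder's inequality in $\rho$. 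Splitting at $t-\rho=1$: for $t-\rho\geqslant1$ the kernel is $\lesssim t^{-1/2}(t-\rho)^{-1/2}\leqslant t^{-1/2}$ and $\int M\leqslant\|g\|_{L^1}$, giving $t^{-1/2}$. For $t-1\leqslant\rho\leqslant t$, the pointwise trace bound together with $\int_{t-1}^t(t^2-\rho^2)^{-1/2}\,\mathrm{d}\rho\lesssim t^{-1/2}$ gives $t^{-1/2}\|g\|_{W^{1,1}}$. This achieves the rate $\langle t\rangle^{-1/2}$. The same decomposition is then applied to $\partial_tW_1(t)f$, after rewriting it via the scaled variable as $t^{-1}W_1f$ plus $W_1$-type integrals of $z\cdot\nabla f$; this costs one derivative and explains the norms $W^{2,1}\cap W^{1,\infty}$.

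\textbf{$L^2$ estimates.} \eqref{Wave-Time-Derivative} and the bounds for $\cos(t|\nabla|)$ follow from Plancherel's theorem and $|\sin|,|\cos|\leqslant1$. For \eqref{Wave-Energy}, the high frequencies $|\xi|\geqslant1$ are controlled by $\|g\|_{H^{s_1}}$. For low frequencies, $|\sin(t|\xi|)/|\xi||\leqslant\min\{t,|\xi|^{-1}\}$ and $|\hat g|\leqslant\|g\|_{L^1}$ give
\begin{align*}
\int_{|\xi|\leqslant1}\min\{t^2,|\xi|^{-2}\}\,\mathrm{d}\xi\lesssim
\begin{cases}
\ln(\mathrm{e}+t)&\mbox{if}\ \ n=2,\\
1&\mbox{if}\ \ n=3,
\end{cases}
\end{align*}
which is exactly $\Lambda_n(t)^2$. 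Minkowski's inequality in $\tau$ then finishes all three estimates.
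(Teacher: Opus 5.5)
Your proposal is correct and, once the abandoned three-piece split is discarded, it follows the paper's proof essentially step for step. Both arguments use Duhamel's formula with Minkowski's inequality, and the Kirchhoff formula with Lemma~\ref{Lemma-Trace} in 3D. In 2D, both split the Poisson integral into the interior $|x-y|\leqslant t-1$, where the kernel is $\lesssim t^{-1/2}$ and the $L^1$ norm is used, and the boundary layer $t-1\leqslant|x-y|<t$, where the trace bound is combined with $\int_{t-1}^t(t^2-\rho^2)^{-1/2}\,\mathrm{d}\rho\lesssim t^{-1/2}$. For $\Lambda_n(t)$, both use the same low-frequency bound via $\min\{t^2,|\xi|^{-2}\}$.

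One small adjustment is needed. The boundary-layer step requires $t\geqslant2$, because Lemma~\ref{Lemma-Trace} needs radius at least $1$. Your small-time bound should therefore be stated for $t\leqslant2$. It already covers that range, since the Poisson kernel has total mass $t$.
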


\begin{proof} 
	Let us denote the standard free wave propagators via
	\begin{align*}
		W_0(t,|\nabla|):=\cos(t|\nabla|)\ \ \mbox{and}\ \ W_1(t,|\nabla|):=\frac{\sin(t|\nabla|)}{|\nabla|}.
	\end{align*}
	The mild solution to \eqref{Wave} is represented by
	\begin{align}\label{Wave-mild-formula}
		\varphi(t,x)=W_0(t,|\nabla|)\varphi_0(x)+W_1(t,|\nabla|)\varphi_1(x)+\int_0^tW_1(t-\tau,|\nabla|)G(\tau,x)\,\mathrm{d}\tau.
	\end{align}

	We first prove \eqref{Wave-Energy} and \eqref{Wave-Time-Derivative}. The estimates for the terms
	\begin{align*}
		W_0(t,|\nabla|)\varphi_0(\cdot),\ \ \partial_tW_0(t,|\nabla|)\varphi_0(\cdot),\ \ \partial_tW_1(t,|\nabla|)\varphi_1(\cdot)
	\end{align*}
	follow directly from the boundedness of the corresponding Fourier multipliers on Sobolev spaces. The only term that may exhibit low-frequency growth is
	$W_1(t,|\nabla|)\varphi_1(\cdot)$ in $H^{s_1+1}$. On the region $|\xi|\geqslant1$, one has
	\begin{align*}
		(1+|\xi|^2)^{s_1+1}\frac{|\sin(t|\xi|)|^2}{|\xi|^2}\lesssim(1+|\xi|^2)^{s_1},
	\end{align*}
	and hence the high-frequency part is bounded by $\|\varphi_1\|_{H^{s_1}}$. For the low-frequency part, the Hausdorff-Young inequality gives
	\begin{align}\label{Wave-low-frequency-integral}
		\int_{|\xi|\leqslant1}(1+|\xi|^2)^{s_1+1}\frac{|\sin(t|\xi|)|^2}{|\xi|^2}|\hat{\varphi}_1(\xi)|^2\,\mathrm{d}\xi\lesssim\|\varphi_1\|_{L^1}^2\int_0^1|\sin(t\rho)|^2\rho^{n-3}\,\mathrm{d}\rho.
	\end{align}
	If $n=3$, the last integral is uniformly bounded. If $n=2$ and $t\geqslant1$, we split the previous integral at $\rho=t^{-1}$ to obtain
	\begin{align*}
		\int_0^1\frac{|\sin(t\rho)|^2}{\rho}\,\mathrm{d}\rho&\lesssim t^2\int_0^{t^{-1}}\rho\,\mathrm{d}\rho+\int_{t^{-1}}^1\frac{1}{\rho}\,\mathrm{d}\rho\\[0.3em]
		&\lesssim\ln(\mathrm{e}+t).
	\end{align*}
	The same bound is immediate for $0\leqslant t\leqslant1$. As a consequence,
	\begin{align}\label{W1-energy-bound}
		\|W_1(t,|\nabla|)\varphi_1(\cdot)\|_{H^{s_1+1}}\lesssim\Lambda_n(t)\|\varphi_1\|_{H^{s_1}\cap L^1}.
	\end{align}
	Applying \eqref{W1-energy-bound} with $t$ replaced by $t-\tau$, using Minkowski's inequality in \eqref{Wave-mild-formula}, and differentiating the mild formula with respect to time prove \eqref{Wave-Energy} and \eqref{Wave-Time-Derivative}.

	We next demonstrate \eqref{Wave-Linfty} by considering two different dimensions separately.
	\begin{description}
		\item[Three space dimensions] The Kirchhoff formula (see, for example, \cite{John=1981}) provides
		\begin{align}\label{Kirchhoff-formulas}
			W_1(t,|\nabla|)\varphi_1(x)&=\frac{1}{4\pi t}\int_{|x-y|=t}\varphi_1(y)\,\mathrm{d}S_y,\notag\\
			W_0(t,|\nabla|)\varphi_0(x)&=\partial_t\left(\frac{1}{4\pi t}\int_{|x-y|=t}\varphi_0(y)\,\mathrm{d}S_y\right).
		\end{align}
		For $t\geqslant1$, Lemma~\ref{Lemma-Trace} yields
		\begin{align}\label{W1-Linfty-3D}
			\|W_1(t,|\nabla|)\varphi_1(\cdot)\|_{L^\infty}&\lesssim t^{-1}\sup_{x\in\mb{R}^3}\int_{|x-y|=t}|\varphi_1(y)|\,\mathrm{d}S_y\notag\\[0.3em]
			&\lesssim t^{-1}\|\varphi_1\|_{W^{1,1}}.
		\end{align}
		For $0\leqslant t\leqslant1$, the equivalent representation
		\begin{align*}
			W_1(t,|\nabla|)\varphi_1(x)=\frac{t}{4\pi}\int_{\mb{S}^2}\varphi_1(x+t\,\omega)\,\mathrm{d}\omega
		\end{align*}
		implies
		\begin{align*}
			\|W_1(t,|\nabla|)\varphi_1(\cdot)\|_{L^\infty}\lesssim\|\varphi_1\|_{L^\infty}.
		\end{align*}
		For another, a direct differentiation in \eqref{Kirchhoff-formulas} gives
		\begin{align*}
			W_0(t,|\nabla|)\varphi_0(x)=\frac{1}{4\pi t}\int_{|x-y|=t}\nabla\varphi_0(y)\cdot\frac{y-x}{t}\,\mathrm{d}S_y+\frac{1}{4\pi t^2}\int_{|x-y|=t}\varphi_0(y)\,\mathrm{d}S_y.
		\end{align*}
		Thus, Lemma~\ref{Lemma-Trace}, applied to $\nabla\varphi_0$ and $\varphi_0$, shows
		\begin{align}\label{W0-Linfty-3D}
			\|W_0(t,|\nabla|)\varphi_0(\cdot)\|_{L^\infty}\lesssim t^{-1}\|\varphi_0\|_{W^{2,1}}
		\end{align}
		for $t\geqslant1$. For $0\leqslant t\leqslant1$, the representation on the unit sphere indicates
		\begin{align*}
			\|W_0(t,|\nabla|)\varphi_0(\cdot)\|_{L^\infty}\lesssim\|\varphi_0\|_{W^{1,\infty}}.
		\end{align*}
		Combining \eqref{W1-Linfty-3D} and \eqref{W0-Linfty-3D} proves the homogeneous part of \eqref{Wave-Linfty} for $n=3$.
		\item[Two space dimensions] The Poisson formula (see, for example, \cite{John=1981}) reads
		\begin{align}\label{Poisson-W1-merged}
			W_1(t,|\nabla|)\varphi_1(x)=\frac{1}{2\pi}\int_{|x-y|<t}\frac{\varphi_1(y)}{\sqrt{t^2-|x-y|^2}}\,\mathrm{d}y.
		\end{align}
		For $0\leqslant t\leqslant2$, the change of variables $y=x+tz$ gives
		\begin{align*}
			W_1(t,|\nabla|)\varphi_1(x)=\frac{t}{2\pi}\int_{|z|<1}\frac{\varphi_1(x+tz)}{\sqrt{1-|z|^2}}\,\mathrm{d}z,
		\end{align*}
		and consequently,
		\begin{align*}
			\|W_1(t,|\nabla|)\varphi_1(\cdot)\|_{L^\infty}\lesssim\|\varphi_1\|_{L^\infty}.
		\end{align*}
		For $t\geqslant2$, we split the integral in \eqref{Poisson-W1-merged} into the interior region $|x-y|\leqslant t-1$ and the boundary layer $t-1\leqslant|x-y|<t$. With the help of
		\begin{align*}
			t^2-|x-y|^2\geqslant t^2-(t-1)^2\gtrsim t
		\end{align*}
		on the interior region, one obtains
		\begin{align*}
			\int_{|x-y|\leqslant t-1}\frac{|\varphi_1(y)|}{\sqrt{t^2-|x-y|^2}}\,\mathrm{d}y\lesssim t^{-\frac12}\|\varphi_1\|_{L^1}.
		\end{align*}
		For the boundary layer, polar coordinates centered at $x$ and Lemma~\ref{Lemma-Trace} shows
		\begin{align*}
			\int_{t-1\leqslant|x-y|<t}\frac{|\varphi_1(y)|}{\sqrt{t^2-|x-y|^2}}\,\mathrm{d}y&\lesssim\int_{t-1}^{t}\frac{1}{\sqrt{t^2-\rho^2}}\left(\int_{|x-y|=\rho}|\varphi_1(y)|\,\mathrm{d}S_y\right)\mathrm{d}\rho\notag\\
			&\lesssim\|\varphi_1\|_{W^{1,1}}\int_{t-1}^{t}\frac{1}{\sqrt{t^2-\rho^2}}\,\mathrm{d}\rho\\[0.3em]
			&\lesssim t^{-\frac12}\|\varphi_1\|_{W^{1,1}}.
		\end{align*}
		The summary of them implies
		\begin{align}\label{W1-Linfty-2D-merged}
			\|W_1(t,|\nabla|)\varphi_1(\cdot)\|_{L^\infty}\lesssim\langle t\rangle^{-\frac12}\|\varphi_1\|_{W^{1,1}\cap L^\infty}.
		\end{align}
		Differentiating the Poisson formula yields
		\begin{align}\label{Poisson-W0-merged}
			W_0(t,|\nabla|)\varphi_0(x)=\frac{1}{2\pi t}\int_{|x-y|<t}\frac{\varphi_0(y)+(y-x)\cdot\nabla\varphi_0(y)}{\sqrt{t^2-|x-y|^2}}\,\mathrm{d}y.
		\end{align}
		For $0\leqslant t\leqslant2$, the representation before the change of variables implies
		\begin{align*}
			\|W_0(t,|\nabla|)\varphi_0(\cdot)\|_{L^\infty}\lesssim\|\varphi_0\|_{W^{1,\infty}}.
		\end{align*}
		For $t\geqslant2$, we apply the preceding interior-boundary decomposition to both terms in \eqref{Poisson-W0-merged}. In the term containing $\nabla\varphi_0$, the factor $t^{-1}|x-y|$ is bounded by $1$. Therefore,
		\begin{align}\label{W0-Linfty-2D-merged}
			\|W_0(t,|\nabla|)\varphi_0(\cdot)\|_{L^\infty}\lesssim t^{-\frac12}\|\varphi_0\|_{W^{2,1}}.
		\end{align}
		Combining \eqref{W1-Linfty-2D-merged} and \eqref{W0-Linfty-2D-merged} justifies the homogeneous part of \eqref{Wave-Linfty} for $n=2$.
	\end{description}
	Eventually, applying the estimate for $W_1(t,|\nabla|)$ in the corresponding dimension with $t$ replaced by $t-\tau$ directly, and then using Minkowski's inequality in \eqref{Wave-mild-formula}, yields the Duhamel contribution in \eqref{Wave-Linfty}.
\end{proof}

\begin{remark}[Dimension-dependent features of the wave estimates]
	The factor $\Lambda_n(t)$ is caused exclusively by the low-frequency part of $W_1(t,|\nabla|)$. The radial integral in \eqref{Wave-low-frequency-integral} is uniformly bounded in three space dimensions, whereas in two space dimensions it grows like $\ln(\mathrm{e}+t)$. The resulting factor $\Lambda_2(t)$ is sharp in general with the $L^1$ integrable data and reflects an intrinsic low-frequency property of the two-dimensional free wave equation (see
	Remark~\ref{Remark-Optimal-Logarithmic-Growth}). On the other hand, the decay rate in \eqref{Wave-Linfty} comes from the geometry of the Poisson and Kirchhoff formulas. The additional derivative imposed on $\varphi_0$ is needed when differentiating the corresponding representation formula for $W_0(t,|\nabla|)\varphi_0$. 	
\end{remark}

\section{Global in-time existence for Nakao's problem}
 We now prove Theorem~\ref{Thm-01}. The main point is that the regularity needed for the self-map estimate is stronger than that required for the contraction. We therefore introduce a strong evolution space in which the nonlinear map is invariant and a weaker metric in which it becomes contractive. After establishing the corresponding nonlinear estimates, we complete the fixed point argument and prove the scattering statement in Corollary~\ref{Cor-Free-Wave}.

\subsection{Functional setting and solution map}\label{Subsection-Functional-Setting}

 We begin by introducing the solution map associated with Nakao's problem \eqref{Nakao-Problem}. Let
\begin{align*}
	\ml{N}_1[v]&:=u^{\lin}+u^{\nlin},\\
	\ml{N}_2[u]&:=v^{\lin}+v^{\nlin},
\end{align*}
where $u^{\lin}=u^{\lin}(t,x)$ and $v^{\lin}=v^{\lin}(t,x)$ are
\begin{align*}
	u^{\lin}(t,x)&:=E_0^u(t,x)\ast_{(x)}u_0(x)+E_1^u(t,x)\ast_{(x)}u_1(x),\\
	v^{\lin}(t,x)&:=E_0^v(t,x)\ast_{(x)}v_0(x)+E_1^v(t,x)\ast_{(x)}v_1(x),
\end{align*}
and the Duhamel parts $u^{\nlin}=u^{\nlin}(t,x)$ and $v^{\nlin}=v^{\nlin}(t,x)$ are defined by
\begin{align*}
	u^{\nlin}(t,x)&:=\int_0^tE_1^u(t-\tau,x)\ast_{(x)}|v(\tau,x)|^p\,\mathrm{d}\tau,\\
	v^{\nlin}(t,x)&:=\int_0^tE_1^v(t-\tau,x)\ast_{(x)}|u(\tau,x)|^q\,\mathrm{d}\tau.
\end{align*}
Here, $E_k^u=E_k^u(t,x)$ and $E_k^v=E_k^v(t,x)$ for $k\in\{0,1\}$ denote the fundamental solutions to the linear Cauchy problems \eqref{Damped-Wave} and \eqref{Wave}, respectively.

By Duhamel's principle, we define the operator
\begin{align*}
	\ml{N}:\ (u,v)\in Z_T\mapsto\ml{N}[u,v]:=\big(\ml{N}_1[v],\ml{N}_2[u]\big)
\end{align*}
on the evolution space $Z_T:=X_T\times Y_T$. Motivated by Lemma~\ref{Lemma-damped-wave} and Lemma~\ref{Lemma-Wave}, for any $T>0$, we introduce
\begin{align*}
	X_T&:=\ml{C}\big([0,T],W^{2,r_n}\cap H^2\big)\cap\ml{C}^1\big([0,T],H^1\big),\\
	Y_T&:=\ml{C}\big([0,T],L^\infty\cap H^2\big)\cap\ml{C}^1\big([0,T],H^1\big),
\end{align*}
endowed with the corresponding norms
\begin{align*}
	\|u\|_{X_T}&:=\sup\limits_{t\in[0,T]}\Big(\langle t\rangle^{\frac{n}{2}\left(\frac{1}{m}-\frac{1}{r_n}\right)}\|u(t,\cdot)\|_{W^{2,r_n}}+\|u(t,\cdot)\|_{H^2}+\langle t\rangle\|u_t(t,\cdot)\|_{H^1}\Big),\\
	\|v\|_{Y_T}&:=\sup\limits_{t\in[0,T]}\Big(\langle t\rangle^{\frac{n-1}{2}}\|v(t,\cdot)\|_{L^\infty}+[\Lambda_n(t)]^{-1}\|v(t,\cdot)\|_{H^2}+\|v_t(t,\cdot)\|_{H^1}\Big).
\end{align*}
For this reason, we set $\|(u,v)\|_{Z_T}:=\|u\|_{X_T}+\|v\|_{Y_T}$.

\subsection{Strategy of the fixed point argument}

We first construct a solution on an arbitrary finite interval $[0,T]$, with all estimates uniform with respect to $T>0$. Let
\begin{align}\label{Size-of-initial-data}
	\varepsilon:=\|(u_0,u_1)\|_{\ml{A}_n}+\|(v_0,v_1)\|_{\ml{B}_n}
\end{align}
denote the size of the initial data. The fixed point argument is carried out at two different levels. The nonlinear map is invariant in the strong solution space $Z_T$, whereas the contraction is established with respect to a weaker metric.

\medskip

\noindent\textit{Strong self-map.}
The first step is to prove
\begin{align}\label{Crucial-01}
	\|\ml{N}[u,v]\|_{Z_T}\lesssim\varepsilon+\|v\|_{Y_T}^p+\|u\|_{X_T}^q,
\end{align}
uniformly for $T>0$. Consequently, if
\begin{align}\label{Ball-RT}
	B_R(T):=\big\{(u,v)\in Z_T:\ \|(u,v)\|_{Z_T}\leqslant R\big\},
\end{align}
then by taking $R$ proportional to $\varepsilon$ and choosing $\varepsilon>0$ sufficiently small, we obtain
\begin{align}\label{Ball-subset}
	\ml{N}\big(B_R(T)\big)\subset B_R(T).
\end{align}

\medskip

\noindent\textit{Obstruction to strong contraction.}
A direct contraction argument in $Z_T$ would require a second-order difference estimate for $|v|^p-|\tilde v|^p$. At the level of the second-order derivatives, one encounters the difference
\begin{align*}
	\big||v|^{p-2}-|\tilde v|^{p-2}\big|.
\end{align*}
When $2<p<3$, the map $z\mapsto |z|^{p-2}$ is only H\"older continuous near the origin and does not satisfy the Lipschitz estimate required by a contraction argument in the strong norm. This would impose the artificial restriction $p\geqslant3$ in three space dimensions. We therefore lower the spatial regularity of the contraction metric by one derivative, while retaining the time-weighted $L^\infty$ components needed to estimate the nonlinear source terms. In this weaker metric, only first-order nonlinear differences are required. Although the two-dimensional admissible range already satisfies $p>3$, we use the same weak metric in both dimensions $n\in\{2,3\}$ in order to present a unified argument.

\medskip

\noindent\textit{Weak contraction.}
After introducing the metric $d_T$ in Section~\ref{Subsection-Weak-Contraction}, we prove
\begin{align}\label{Crucial-03}
	d_T\big(\ml{N}[w],\ml{N}[\tilde w]\big)\leqslant C(R^{p-1}+R^{q-1})\,d_T(w,\tilde w)
\end{align}
for all $w,\tilde w\in B_R(T)$, uniformly for $T>0$. By decreasing $R>0$ if necessary, the coefficient on the right-hand side of \eqref{Crucial-03} is bounded by $\frac12$.

\medskip

\noindent\textit{Recovery of the strong solution and passage to global time.} 
Since $B_R(T)$ is defined by the strong norm but the contraction is performed in a weaker metric, completeness does not follow directly from the strong self-map property. Instead, the Picard sequence is shown to converge in the weak evolution space, while remaining uniformly bounded in $B_R(T)$. Weak-$\ast$ compactness and lower semicontinuity then recover the strong bounds for the limit, and the Duhamel representation identifies it as a fixed point of $\ml{N}$ in $Z_T$. Finally, because all estimates are independent of $T$, the finite-time solutions are compatible by uniqueness and define a global in-time solution.

\subsection{Nonlinear estimates}

 The following Sobolev embeddings will be used repeatedly in this part:
\begin{align}\label{Embeddings-u}
	W^{2,r_n}&\hookrightarrow W^{1,q}\cap W^{1,2q}\cap L^\infty,\\
	\label{Embeddings-v}H^2&\hookrightarrow W^{2-\frac{2}{r_n},r_n}.
\end{align}
\begin{itemize}
	\item For $n=2$, these follow from $r_2>2$, $q\geqslant r_2$ and $r_2\leqslant4$.
	\item For $n=3$, the second embedding follows from $r_3\leqslant\frac{10}{3}$. If $2<q\leqslant\frac{10}{3}$, then $r_3=q$ and the first embedding is standard; if $q>\frac{10}{3}$, then $r_3=\frac{10}{3}>3$ and $W^{2,r_3}\hookrightarrow W^{1,\infty}$ implies the first embedding.
\end{itemize}
  Moreover, we shall use the following fractional power rule (see, for example, \cite{Palmieri-Reissig=2018}):
\begin{align*}
	\|\,|f|^p\|_{W^{s,r}}\lesssim\|f\|_{L^\infty}^{p-1}\|f\|_{W^{s,r}}\ \ \mbox{with}\ \ p>2
\end{align*}
for $f\in W^{s,r}\cap L^\infty$, where $1<s<2$ and $1<r<\infty$.

\begin{lemma}[Strong nonlinear estimates]\label{Lemma-Nonlinear-Strong}
	Let $u\in X_T$ and $v\in Y_T$. Then, the following estimates hold:
	\begin{align}\label{Nonlinear-v-W2m}
		\|\,|v(\tau,\cdot)|^p\|_{W^{2,m}}&\lesssim\langle\tau\rangle^{-\frac{n-1}{2}\left(p-\frac{2}{m}\right)}[\Lambda_n(\tau)]^{\frac{2}{m}}\|v\|_{Y_\tau}^p,\\
		\label{Nonlinear-v-high}
		\|\,|v(\tau,\cdot)|^p\|_{W^{2-\frac{2}{r_n},r_n}}+\|\,|v(\tau,\cdot)|^p\|_{H^1}&\lesssim\langle\tau\rangle^{-\frac{n-1}{2}(p-1)}\Lambda_n(\tau)\|v\|_{Y_\tau}^p,\\
		\label{Nonlinear-u-source}
		\|\,|u(\tau,\cdot)|^q\|_{W^{1,1}\cap L^\infty}+\|\,|u(\tau,\cdot)|^q\|_{H^1\cap L^1}&\lesssim\langle\tau\rangle^{-q\frac{n}{2}\left(\frac{1}{m}-\frac{1}{r_n}\right)}\|u\|_{X_\tau}^q,
	\end{align}
	for all $\tau\in[0,T]$.
\end{lemma}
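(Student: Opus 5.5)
The three estimates are proved separately. Throughout, the only information used is what the norms of $Y_\tau$ and $X_\tau$ give at each fixed time: $\|v(\tau)\|_{L^\infty}\leqslant\langle\tau\rangle^{-\frac{n-1}{2}}\|v\|_{Y_\tau}$ and $\|v(\tau)\|_{H^2}\leqslant\Lambda_n(\tau)\|v\|_{Y_\tau}$; likewise $\|u(\tau)\|_{W^{2,r_n}}\leqslant\langle\tau\rangle^{-\frac n2(\frac1m-\frac1{r_n})}\|u\|_{X_\tau}$ and $\|u(\tau)\|_{H^2}\leqslant\|u\|_{X_\tau}$.

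\emph{Estimate \eqref{Nonlinear-v-W2m}.} Since $p>2$, $|v|^p\in\mathcal{C}^2$ with $\partial^2|v|^p\sim |v|^{p-1}\partial^2 v+|v|^{p-2}|\nabla v|^2$. Because $m<2$, I take the $L^m$ norm by putting $|v|^{p-\frac2m}$ in $L^\infty$ and the remaining factors in $L^2$-based norms. For the zero-order term, $\|\,|v|^p\|_{L^m}\leqslant\|v\|_{L^\infty}^{p-\frac2m}\|v\|_{L^2}^{\frac2m}$. For the derivative terms I write $|v|^{p-1}=|v|^{p-\frac2m}|v|^{\frac2m-1}$. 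Then Hölder with $\frac1m=\frac12+(\frac1m-\frac12)$ and the Gagliardo--Nirenberg/Sobolev embedding $H^2\hookrightarrow L^{\sigma}$ for every $\sigma\in[2,\infty)$ bound $\|\,|v|^{\frac2m-1}\partial^2v\|_{L^m}\lesssim\|v\|_{L^{2}}^{\frac2m-1}\|\partial^2v\|_{L^2}$, up to interpolation inside $H^2$. The term with $|\nabla v|^2$ is handled the same way, using $\|\nabla v\|_{L^4}\lesssim\|v\|_{H^2}$. Every term is therefore at most $\|v\|_{L^\infty}^{p-\frac2m}\|v\|_{H^2}^{\frac2m}$. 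Substituting the time weights gives $\langle\tau\rangle^{-\frac{n-1}2(p-\frac2m)}\Lambda_n^{\frac2m}\|v\|^p_{Y_\tau}$. The exponents are admissible because $p-\frac2m>0$ by \eqref{Time-integrability-condition} and $\frac2m-1\in(0,1)$.

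\emph{Estimate \eqref{Nonlinear-v-high}.} The $H^1$ part follows from $\|\,|v|^p\|_{L^2}+\|\,|v|^{p-1}\nabla v\|_{L^2}\lesssim\|v\|_{L^\infty}^{p-1}\|v\|_{H^1}$. For the fractional part, $s=2-\frac2{r_n}\in(1,2)$ since $r_n>2$, so the fractional power rule gives $\|\,|v|^p\|_{W^{s,r_n}}\lesssim\|v\|_{L^\infty}^{p-1}\|v\|_{W^{s,r_n}}$, and \eqref{Embeddings-v} bounds $\|v\|_{W^{s,r_n}}$ by $\|v\|_{H^2}$. Both terms produce $\langle\tau\rangle^{-\frac{n-1}2(p-1)}\Lambda_n(\tau)\|v\|_{Y_\tau}^p$.

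\emph{Estimate \eqref{Nonlinear-u-source}.} Use \eqref{Embeddings-u}. The $L^1$ norm is $\|u\|_{L^q}^q$, and the $W^{1,1}$ seminorm is controlled via $\|\,|u|^{q-1}\nabla u\|_{L^1}\leqslant\|u\|_{L^q}^{q-1}\|\nabla u\|_{L^q}$, so both are bounded by $\|u\|_{W^{1,q}}^q$. The $L^\infty$ norm is $\|u\|_{L^\infty}^q$. For $L^2$ and $\dot H^1$ I use $\|u\|_{L^{2q}}^q$ and $\|u\|_{L^\infty}^{q-1}\|\nabla u\|_{L^2}$. The last of these is the delicate one, since $\nabla u$ in $L^2$ carries only the bounded $H^2$ weight. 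To avoid losing decay, I would instead use Hölder as $\|\,|u|^{q-1}\nabla u\|_{L^2}\leqslant\|u\|_{L^{2q}}^{q-1}\|\nabla u\|_{L^{2q}}$. With \eqref{Embeddings-u} this gives $\|u\|_{W^{1,2q}}^q\lesssim\|u\|_{W^{2,r_n}}^q$, so every piece is bounded by $\|u(\tau)\|_{W^{2,r_n}}^q$. That yields the decay $\langle\tau\rangle^{-q\frac n2(\frac1m-\frac1{r_n})}$.

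\emph{Main obstacle.} The hard step is the first estimate. Its difficulty is distributing the exponents in the $W^{2,m}$ estimate with $m<2$ so that exactly $p-\frac2m$ powers fall on $L^\infty$ and $\frac2m$ powers on $H^2$. This is what produces the factor $\Lambda_n^{2/m}$, and it requires careful interpolation via Gagliardo--Nirenberg for the second-derivative and $|\nabla v|^2$ terms.
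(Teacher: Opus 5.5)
There is a genuine gap in \eqref{Nonlinear-v-W2m}, at the term $|v|^{p-2}|\nabla v|^2$. The rest is fine and is essentially the paper's argument: \eqref{Nonlinear-v-high}, \eqref{Nonlinear-u-source}, and the zero-order, $|v|^{p-1}\nabla v$ and $|v|^{p-1}\nabla^2v$ contributions to \eqref{Nonlinear-v-W2m}. For the latter, your split $|v|^{p-1}=|v|^{p-\frac2m}|v|^{\frac2m-1}$ with $\frac1m=\frac12+\frac{2-m}{2m}$ gives $\|v\|_{L^\infty}^{p-\frac2m}\|v\|_{L^2}^{\frac2m-1}\|\nabla^k v\|_{L^2}$ by H\"older alone.

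The term $|v|^{p-2}|\nabla v|^2$ cannot be ``handled the same way''. Its prefactor carries only $p-2$ powers of $v$, and $p-2<p-\frac2m$ because $m>1$. The analogous split would therefore need the negative power $|v|^{\frac2m-2}$. The missing $2-\frac2m$ powers of $\|v\|_{L^\infty}$ have to come out of $|\nabla v|^2$.

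The Sobolev embedding $\|\nabla v\|_{L^4}\lesssim\|v\|_{H^2}$ that you cite cannot supply them. Combined with H\"older and $H^2\hookrightarrow W^{1,2m}$, it only gives
\[
\|\,|v|^{p-2}|\nabla v|^2\|_{L^m}\lesssim\|v\|_{L^\infty}^{p-2}\|v\|_{H^2}^{2}\lesssim\langle\tau\rangle^{-\frac{n-1}{2}(p-2)}[\Lambda_n(\tau)]^{2}\|v\|_{Y_\tau}^p,
\]
which is strictly weaker than \eqref{Nonlinear-v-W2m}. The loss matters. The exponent $\frac{n-1}{2}(p-2)$ is at most $1$ for $n=3$, $2<p\leqslant3$, and for $n=2$, $3<p\leqslant4$. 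So the source would not be time-integrable, and the self-map estimate would fail in ranges the theorem covers.

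The missing ingredient is a Gagliardo--Nirenberg inequality that puts $L^\infty$ on $v$ itself:
\[
\|\nabla v\|_{L^4}^2\lesssim\|v\|_{L^\infty}\|\nabla^2v\|_{L^2}.
\]
You can prove it by integrating by parts in $\int|\nabla v|^4\,\mathrm{d}x$. Since $2<2m<4$, first bound the term by $\|v\|_{L^\infty}^{p-2}\|\nabla v\|_{L^{2m}}^2$. Then interpolate:
\[
\|\nabla v\|_{L^{2m}}^2\leqslant\|\nabla v\|_{L^2}^{\frac4m-2}\|\nabla v\|_{L^4}^{4-\frac4m}\lesssim\|v\|_{L^\infty}^{2-\frac2m}\|v\|_{H^2}^{\frac2m}.
\]
This yields exactly $\|v\|_{L^\infty}^{p-\frac2m}\|v\|_{H^2}^{\frac2m}$, which is the paper's route. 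Your closing paragraph does mention Gagliardo--Nirenberg for this term. However, the inequality you actually use in the proof is the plain embedding, and that loses the decay.
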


\begin{proof}
	We first estimate the source term $|v(\tau,x)|^p$. The chain rule implies
	\begin{align}\label{Chain-v-W2m}
		\|\,|v(\tau,\cdot)|^p\|_{W^{2,m}}&\lesssim\|v(\tau,\cdot)\|_{L^{mp}}^p+\|\,|v(\tau,\cdot)|^{p-1}|\nabla v(\tau,\cdot)|\,\|_{L^m}\notag\\
		&\quad\ +\|\,|v(\tau,\cdot)|^{p-1}|\nabla^2v(\tau,\cdot)|\,\|_{L^m}+\|\,|v(\tau,\cdot)|^{p-2}|\nabla v(\tau,\cdot)|^2\|_{L^m}.
	\end{align}
	Interpolation between $L^2$ and $L^\infty$ gives
	\begin{align*}
		\|v(\tau,\cdot)\|_{L^{mp}}^p\lesssim\|v(\tau,\cdot)\|_{L^\infty}^{p-\frac2m}\|v(\tau,\cdot)\|_{L^2}^{\frac2m}.
	\end{align*}
	Moreover, by H\"older's inequality with the exponent $\frac{2m(p-1)}{2-m}$, we deduce
	\begin{align*}
		&\|\,|v(\tau,\cdot)|^{p-1}|\nabla v(\tau,\cdot)|\,\|_{L^m}+\|\,|v(\tau,\cdot)|^{p-1}|\nabla^2v(\tau,\cdot)|\,\|_{L^m}\\[0.3em]
		&\qquad\lesssim\|v(\tau,\cdot)\|_{L^{\frac{2m(p-1)}{2-m}}}^{p-1}\big(\|\nabla v(\tau,\cdot)\|_{L^2}+\|\nabla^2v(\tau,\cdot)\|_{L^2}\big)\\
		&\qquad\lesssim\|v(\tau,\cdot)\|_{L^\infty}^{p-\frac2m}\|v(\tau,\cdot)\|_{H^2}^{\frac2m}.
	\end{align*}
	To estimate the last term in \eqref{Chain-v-W2m}, we use $2<2m<4$. Interpolation between $L^2$ and $L^4$, together with the Gagliardo-Nirenberg inequality, yields
	\begin{align*}
		\|\nabla v(\tau,\cdot)\|_{L^{2m}}^2&\lesssim\|\nabla v(\tau,\cdot)\|_{L^2}^{\frac4m-2}\|\nabla v(\tau,\cdot)\|_{L^4}^{4-\frac4m}\\
        &\lesssim\|\nabla v(\tau,\cdot)\|_{L^2}^{\frac4m-2}\|v(\tau,\cdot)\|_{L^\infty}^{2-\frac2m}\|\nabla^2v(\tau,\cdot)\|_{L^2}^{2-\frac2m}\\
		&\lesssim\|v(\tau,\cdot)\|_{L^\infty}^{2-\frac2m}\|v(\tau,\cdot)\|_{H^2}^{\frac2m}.
	\end{align*}
	Consequently,
	\begin{align*}
		\|\,|v(\tau,\cdot)|^{p-2}|\nabla v(\tau,\cdot)|^2\|_{L^m}\lesssim\|v(\tau,\cdot)\|_{L^\infty}^{p-\frac2m}\|v(\tau,\cdot)\|_{H^2}^{\frac2m}.
	\end{align*}
	Combining these estimates with the definition of $Y_\tau$, we arrive at \eqref{Nonlinear-v-W2m}.

	Next, notice that $1<2-\frac2{r_n}<2$. Due to $p>2$, the fractional chain rule and \eqref{Embeddings-v} imply
	\begin{align*}
		\|\,|v(\tau,\cdot)|^p\|_{W^{2-\frac2{r_n},r_n}}&\lesssim\|v(\tau,\cdot)\|_{L^\infty}^{p-1}\|v(\tau,\cdot)\|_{W^{2-\frac2{r_n},r_n}}\\
		&\lesssim\|v(\tau,\cdot)\|_{L^\infty}^{p-1}\|v(\tau,\cdot)\|_{H^2}\\
		&\lesssim \langle\tau\rangle^{-\frac{n-1}{2}(p-1)}\Lambda_n(\tau)\|v\|_{Y_\tau}^p.
	\end{align*}
	The chain rule gives the same bound in $H^1$. Therefore,  \eqref{Nonlinear-v-high} is valid.

	Finally, by \eqref{Embeddings-u} and the definition of $X_\tau$, we obtain
	\begin{align}\label{u-strong-decay}
		\|u(\tau,\cdot)\|_{W^{1,q}\cap W^{1,2q}\cap L^\infty}\lesssim\langle\tau\rangle^{-\frac n2\left(\frac1m-\frac1{r_n}\right)}\|u\|_{X_\tau}.
	\end{align}
	Using H\"older's inequality and the chain rule, one has
	\begin{align*}
		\|\,|u(\tau,\cdot)|^q\|_{W^{1,1}}&\lesssim\|u(\tau,\cdot)\|_{L^q}^q+\|u(\tau,\cdot)\|_{L^q}^{q-1}\|\nabla u(\tau,\cdot)\|_{L^q},\\
		\|\,|u(\tau,\cdot)|^q\|_{H^1}&\lesssim\|u(\tau,\cdot)\|_{L^{2q}}^q+\|u(\tau,\cdot)\|_{L^{2q}}^{q-1}\|\nabla u(\tau,\cdot)\|_{L^{2q}},\\
		\|\,|u(\tau,\cdot)|^q\|_{L^\infty}&\lesssim\|u(\tau,\cdot)\|_{L^\infty}^q.
	\end{align*}
	The $L^1$ estimate is contained in the $W^{1,1}$ estimate. Substituting \eqref{u-strong-decay} proves \eqref{Nonlinear-u-source}. 
\end{proof}

\begin{remark}[Regularity of the nonlinear compositions]
	The condition $p>2$ is sufficient for all estimates in Lemma~\ref{Lemma-Nonlinear-Strong}. 
	\begin{itemize}
		\item For the integer-order $W^{2,m}$ estimate, the chain rule produces the terms $|v|^{p-1}\nabla^2v$ and $|v|^{p-2}|\nabla v|^2$. Both are well-defined and can be estimated for a single function when $p>2$.
		\item For the Bessel-potential estimate in $W^{2-\frac2{r_n},r_n}$, the differentiability order lies strictly between $1$ and $2$, and the fractional chain rule is again available under $p>2$.
	\end{itemize}   The stronger condition $p\geqslant3$ would be needed only for a Lipschitz estimate of the second-order difference $|v|^p-|\tilde v|^p$ in the strong contraction norm, because this would require Lipschitz continuity of the map $z\mapsto|z|^{p-2}$. This is the artificial restriction avoided by the weak metric.
\end{remark}

\subsection{Strong self-map property}

Recalling that
\begin{align}\label{Condition-N-1}
	(n-1)\left(\frac12-\frac1{r_n}\right)\leqslant1-\frac2{r_n}\ \ \mbox{for}\ \ n\in\{2,3\},
\end{align}
the high-frequency regularity required in Lemma~\ref{Lemma-damped-wave} is contained in the definition of $\ml{A}_n$. Therefore, by Lemma~\ref{Lemma-damped-wave} and Lemma~\ref{Lemma-Wave}, the linear parts satisfy
\begin{align}\label{Linear-parts-estimate}
	\|u^{\lin}\|_{X_T}+\|v^{\lin}\|_{Y_T}\lesssim\|(u_0,u_1)\|_{\ml{A}_n}+\|(v_0,v_1)\|_{\ml{B}_n}.
\end{align}
We now estimate the Duhamel parts. Applying Lemma~\ref{Lemma-damped-wave} with $(p_0,q_0)=(m,r_n)$ and $s_0=1$, we obtain
\begin{align*}
	\|u^{\nlin}(t,\cdot)\|_{W^{2,r_n}}&\lesssim\int_0^t\langle t-\tau\rangle^{-\frac n2\left(\frac1m-\frac1{r_n}\right)}\|\,|v(\tau,\cdot)|^p\|_{W^{2,m}}\,\mathrm{d}\tau\\
	&\quad\ +\int_0^t\mathrm{e}^{-c(t-\tau)}\|\,|v(\tau,\cdot)|^p\|_{W^{1+(n-1)\left(\frac12-\frac1{r_n}\right),r_n}}\,\mathrm{d}\tau.
\end{align*}
Thanks to \eqref{Condition-N-1}, the second norm is controlled by the left-hand side of \eqref{Nonlinear-v-high}. Using \eqref{Nonlinear-v-W2m}, \eqref{Nonlinear-v-high}, the inequalities in \eqref{Time-integrability-condition} and Lemma~\ref{Lemma-Time-Convolution}, it follows that
\begin{align*}
	\|u^{\nlin}(t,\cdot)\|_{W^{2,r_n}}\lesssim\langle t\rangle^{-\frac n2\left(\frac1m-\frac1{r_n}\right)}\|v\|_{Y_T}^p.
\end{align*}
For the energy components, Lemma~\ref{Lemma-damped-wave} and \eqref{Nonlinear-v-high} give
\begin{align*}
	\|u^{\nlin}(t,\cdot)\|_{H^2}&\lesssim\int_0^t\langle\tau\rangle^{-\frac{n-1}{2}(p-1)}\Lambda_n(\tau)\,\mathrm{d}\tau\,\|v\|_{Y_T}^p\lesssim\|v\|_{Y_T}^p,\\\|u_t^{\nlin}(t,\cdot)\|_{H^1}
	&\lesssim\int_0^t\langle t-\tau\rangle^{-1}\langle\tau\rangle^{-\frac{n-1}{2}(p-1)}\Lambda_n(\tau)\,\mathrm{d}\tau\,\|v\|_{Y_T}^p\lesssim\langle t\rangle^{-1}\|v\|_{Y_T}^p.
\end{align*}
Consequently,
\begin{align}\label{Strong-N1}
	\|\ml{N}_1[v]\|_{X_T}\lesssim\|(u_0,u_1)\|_{\ml{A}_n}+\|v\|_{Y_T}^p.
\end{align}

For the undamped component, Lemma~\ref{Lemma-Wave} and \eqref{Nonlinear-u-source} yield
\begin{align*}
	\|v^{\nlin}(t,\cdot)\|_{L^\infty}\lesssim\int_0^t\langle t-\tau\rangle^{-\frac{n-1}{2}}\langle\tau\rangle^{-q\frac n2\left(\frac1m-\frac1{r_n}\right)}\mathrm{d}\tau\,\|u\|_{X_T}^q\lesssim\langle t\rangle^{-\frac{n-1}{2}}\|u\|_{X_T}^q,
\end{align*}
where we used \eqref{Time-integrability-condition}. Moreover, we find
\begin{align*}
	\|v^{\nlin}(t,\cdot)\|_{H^2}\lesssim\int_0^t\Lambda_n(t-\tau)\langle\tau\rangle^{-q\frac n2\left(\frac1m-\frac1{r_n}\right)}\mathrm{d}\tau\,\|u\|_{X_T}^q\lesssim\Lambda_n(t)\|u\|_{X_T}^q,
\end{align*}
where we used the monotonicity of $\Lambda_n$ and the integrability of the time weight. Similarly,
\begin{align*}
	\|v_t^{\nlin}(t,\cdot)\|_{H^1}\lesssim\int_0^t\langle\tau\rangle^{-q\frac n2\left(\frac1m-\frac1{r_n}\right)}\mathrm{d}\tau\,\|u\|_{X_T}^q\lesssim\|u\|_{X_T}^q.
\end{align*}
Consequently,
\begin{align}\label{Strong-N2}
	\|\ml{N}_2[u]\|_{Y_T}\lesssim\|(v_0,v_1)\|_{\ml{B}_n}+\|u\|_{X_T}^q.
\end{align}
Combining \eqref{Linear-parts-estimate}, \eqref{Strong-N1} and \eqref{Strong-N2}, we obtain \eqref{Crucial-01}.
Then, for $R=2C\varepsilon$ and $\varepsilon>0$ sufficiently small, the operator $\ml{N}$ maps $B_R(T)$ defined in \eqref{Ball-RT} into itself, uniformly with respect to $T>0$. This proves \eqref{Ball-subset}.

\subsection{Weak contraction estimate}\label{Subsection-Weak-Contraction}

 To avoid second-order differences of the nonlinearity $|v|^p$, we lower the spatial regularity of the contraction metric by one derivative. At the same time, we retain the weighted $L^\infty$ component required to estimate the difference of the source terms in the wave equation. Accordingly, we introduce the weak spaces
\begin{align*}
	\mb{X}_T&:=\ml{C}\big([0,T],W^{1,r_n}\cap L^\infty\cap H^1\big)\cap\ml{C}^1\big([0,T],L^2\big),\\
	\mb{Y}_T&:=\ml{C}\big([0,T],L^\infty\cap H^1\big)\cap\ml{C}^1\big([0,T],L^2\big),
\end{align*}
with the corresponding norms
\begin{align}\label{Weak-Norm-X}
	\|u\|_{\mb{X}_T}&:=\sup_{t\in[0,T]}\Big(\langle t\rangle^{\frac{n}{2}\left(\frac{1}{m}-\frac{1}{r_n}\right)}\|u(t,\cdot)\|_{W^{1,r_n}}+\langle t\rangle^{\frac{n}{2}\left(\frac{1}{m}-\frac{1}{r_n}\right)}\|u(t,\cdot)\|_{L^\infty}\notag\\
	&\qquad\qquad\ \ \, +\|u(t,\cdot)\|_{H^1}+\langle t\rangle\|u_t(t,\cdot)\|_{L^2}\Big),
\end{align}
	and
	\begin{align}
	 \label{Weak-Norm-Y}
	\|v\|_{\mb{Y}_T}&:=\sup_{t\in[0,T]}\Big(\langle t\rangle^{\frac{n-1}{2}}\|v(t,\cdot)\|_{L^\infty}+[\Lambda_n(t)]^{-1}\|v(t,\cdot)\|_{H^1}+\|v_t(t,\cdot)\|_{L^2}\Big).
\end{align}
Equipping the norms in \eqref{Weak-Norm-X} and \eqref{Weak-Norm-Y}, for $w=(u,v)$ and $\tilde w=(\tilde u,\tilde v)$, set
\begin{align*}
	d_T(w,\tilde w):=\|u-\tilde u\|_{\mb{X}_T}+\|v-\tilde v\|_{\mb{Y}_T}.
\end{align*}
Notice that $B_R(T)\subset\mb{X}_T\times\mb{Y}_T$ and $d_T$ is finite on $B_R(T)$.

For later use, we verify the completeness of the weak evolution space. Define
\begin{align*}
	\mb{E}_X&:=W^{1,r_n}\cap L^\infty\cap H^1,\\
	\mb{E}_Y&:=L^\infty\cap H^1,
\end{align*}
where each intersection space is endowed with the sum of the corresponding norms. Since all the component spaces are continuously embedded into $\ml{S}'$, both $\mb{E}_X$ and $\mb{E}_Y$ are Banach spaces.

For each fixed $T>0$, the time weights appearing in
\eqref{Weak-Norm-X} and \eqref{Weak-Norm-Y} are bounded from above
and below by positive constants on $[0,T]$. Hence,
$\|\cdot\|_{\mb{X}_T}$ and $\|\cdot\|_{\mb{Y}_T}$ are equivalent,
respectively, to the graph norms
\begin{align*}
	\|u\|_{\ml{C}([0,T],\mb{E}_X)}+\|u_t\|_{\ml{C}([0,T],L^2)} \ \ \mbox{and}\ \ \|v\|_{\ml{C}([0,T],\mb{E}_Y)}+\|v_t\|_{\ml{C}([0,T],L^2)}.
\end{align*}

Indeed, if $\{u^{(k)}\}_{k\in\mb{N}}$ is a Cauchy sequence in
$\mb{X}_T$, then for some $u$ and $g$,
\begin{align*}
	u^{(k)} \to u \ \ &\mbox{in} \ \ 	\ml{C}([0,T],\mb{E}_X),\\
	u_t^{(k)}\to g\ \ &\mbox{in}\ \ 	\ml{C}([0,T],L^2).
\end{align*}
According to
\begin{align*}
	u^{(k)}(t,\cdot)-u^{(k)}(0,\cdot)=\int_0^t u_t^{(k)}(\tau,\cdot)\,\mathrm{d}\tau \ \ \mbox{in}\ \ L^2,
\end{align*}
 passing to the limit gives
\begin{align*}
	u(t,\cdot)-u(0,\cdot)=\int_0^t g(\tau,\cdot)\,\mathrm{d}\tau.
\end{align*}
Thus, $u\in\ml{C}^1([0,T],L^2)$ and $u_t=g$. The same argument applies to $\mb{Y}_T$. Consequently, $\mb{X}_T\times\mb{Y}_T$ is a Banach space, and $d_T$ is the metric induced by its product norm.

\begin{remark}[The weighted $L^\infty$ component in $\mb{X}_T$]
	The weighted $L^\infty$ term in \eqref{Weak-Norm-X} is needed to control the difference of the source terms in the $L^\infty$ norm required by the Poisson- and Kirchhoff-type estimate \eqref{Wave-Linfty}. Indeed, for $w,\tilde w\in B_R(T)$, one observes
	\begin{align*}
		\|\,|u(\tau,\cdot)|^q-|\tilde u(\tau,\cdot)|^q\|_{L^\infty}&\lesssim\big(\|u(\tau,\cdot)\|_{L^\infty}+\|\tilde u(\tau,\cdot)\|_{L^\infty}\big)^{q-1}\|u(\tau,\cdot)-\tilde u(\tau,\cdot)\|_{L^\infty}\\
		&\lesssim R^{q-1}\langle\tau\rangle^{-q\frac n2\left(\frac1m-\frac1{r_n}\right)}d_T(w,\tilde w).
	\end{align*}
	Thus, the weight in \eqref{Weak-Norm-X} supplies the full time-decay rate required for the time-integrability condition in \eqref{Time-integrability-condition}. 
	\begin{itemize}
		\item For $n=2$, the $L^\infty$ component is already controlled by $W^{1,r_2}$ because $r_2>2$.
		\item For $n=3$ and $q>3$, one has $r_3>3$, so it is likewise controlled by $W^{1,r_3}$. However, the admissible three-dimensional range also contains $2<q\leqslant3$, for which $r_3=q$ and $W^{1,r_3}$ does not embed into $L^\infty$ in $\mb{R}^3$. The $L^\infty$ term is therefore included explicitly in order to treat the entire admissible range in a unified weak metric.
	\end{itemize}   
\end{remark}

\begin{remark}[Strong solution class and weak contraction metric]
	The weak metric is used only to compare two iterates. The self-map estimate keeps every Picard iterate uniformly bounded in the strong ball $B_R(T)$. Therefore, the fixed point obtained from the weak contraction still belongs to the strong solution class after the weak limit is identified by the Banach-Alaoglu theorem and lower semicontinuity.
\end{remark}

We next derive the nonlinear difference estimates required for the contraction argument. The coefficients in these estimates are controlled by the uniform strong bounds of the two elements in $B_R(T)$, whereas their difference is measured in the weaker metric $d_T$. Since the contraction norm is lower by one spatial derivative, only first-order differences of the nonlinearities are needed.

\begin{lemma}[Difference estimates]\label{Lemma-Difference}
	Let $w,\tilde w\in B_R(T)$. Then, the following estimates hold:
	\begin{align}\label{Difference-v-W1m}
		\|\,|v(\tau,\cdot)|^p-|\tilde v(\tau,\cdot)|^p\|_{W^{1,m}}&\lesssim R^{p-1}\langle\tau\rangle^{-\frac{n-1}{2}\left(p-\frac{2}{m}\right)}[\Lambda_n(\tau)]^{\frac{2}{m}}d_T(w,\tilde w),\\
		\label{Difference-v-high}
		\|\,|v(\tau,\cdot)|^p-|\tilde v(\tau,\cdot)|^p\|_{H^1}&\lesssim R^{p-1}\langle\tau\rangle^{-\frac{n-1}{2}(p-1)}\Lambda_n(\tau)\,d_T(w,\tilde w),\\
		\label{Difference-u}
		\|\,|u(\tau,\cdot)|^q-|\tilde u(\tau,\cdot)|^q\|_{W^{1,1}\cap L^\infty\cap L^2}&\lesssim R^{q-1}\langle\tau\rangle^{-q\frac{n}{2}\left(\frac{1}{m}-\frac{1}{r_n}\right)}d_T(w,\tilde w),
	\end{align}
	for all $\tau\in[0,T]$.
\end{lemma}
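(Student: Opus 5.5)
The plan is to write each difference via the fundamental theorem of calculus and then reuse the Hölder–interpolation scheme of Lemma~\ref{Lemma-Nonlinear-Strong}. The coefficients are always placed on the strong norms (bounded by $R$). The difference is always placed on the weak quantities entering $d_T$: namely $\langle\tau\rangle^{\frac{n-1}{2}}\|v-\tilde v\|_{L^\infty}$, $[\Lambda_n(\tau)]^{-1}\|v-\tilde v\|_{H^1}$, and the weighted $W^{1,r_n}\cap L^\infty$ and $H^1$ norms of $u-\tilde u$. Write $\delta v:=v-\tilde v$. The basic pointwise identities are
\begin{align*}
|v|^p-|\tilde v|^p&=p\int_0^1|v_\theta|^{p-2}v_\theta\,\mathrm{d}\theta\,\delta v,\\
\nabla\big(|v|^p-|\tilde v|^p\big)&=p\big(|v|^{p-2}v-|\tilde v|^{p-2}\tilde v\big)\nabla v+p|\tilde v|^{p-2}\tilde v\,\nabla\delta v,
\end{align*}
with $v_\theta=\theta v+(1-\theta)\tilde v$. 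Since $p>2$, the map $z\mapsto|z|^{p-2}z$ is Lipschitz with constant $\lesssim(|v|+|\tilde v|)^{p-2}$. This gives
\begin{align*}
|\nabla(|v|^p-|\tilde v|^p)|\lesssim(|v|+|\tilde v|)^{p-2}|\nabla v|\,|\delta v|+|\tilde v|^{p-1}|\nabla\delta v|.
\end{align*}
Only first-order differences appear, so no Lipschitz bound on $|z|^{p-2}$ is needed.

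For \eqref{Difference-v-W1m}, I estimate the $L^m$ norm of the zeroth-order term by Hölder, as in the proof of \eqref{Nonlinear-v-W2m}. I bound
\begin{align*}
\|(|v|+|\tilde v|)^{p-1}\delta v\|_{L^m}\lesssim\|(|v|+|\tilde v|)\|_{L^\infty}^{p-\frac2m}\|(|v|+|\tilde v|)\|_{L^2}^{\frac2m-1}\|\delta v\|_{L^2}.
\end{align*}
Equivalently, I interpolate each factor between $L^2$ and $L^\infty$ so that the total $L^\infty$ power is $p-\frac2m$ and the total $L^2$ power is $\frac2m$, choosing the allocation so that $\delta v$ carries one power. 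The $L^\infty$ factors give $\langle\tau\rangle^{-\frac{n-1}{2}(p-\frac2m)}$, whether they fall on $v,\tilde v$ or on $\delta v$. The $L^2/H^1$ factors give at most $[\Lambda_n(\tau)]^{\frac2m}$. The same powers arise in both groups because the decay exponent only counts total $L^\infty$ versus $L^2$ powers.

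For the gradient terms, I handle them one at a time.
\begin{itemize}
\item The term $|\tilde v|^{p-1}|\nabla\delta v|$ is bounded by $\|\tilde v\|_{L^{\frac{2m(p-1)}{2-m}}}^{p-1}\|\nabla\delta v\|_{L^2}$ exactly as before, using $1<m<2$.
\item For $(|v|+|\tilde v|)^{p-2}|\nabla v||\delta v|$, I put $\delta v$ in $L^\infty$ and $\nabla v$ in $L^2$, and interpolate the remaining power $p-2$ in $L^{\frac{2m(p-2)}{2-m}}$ between $L^2$ and $L^\infty$.
\end{itemize}
Care is needed when $p-2$ is small and $\frac{2m(p-2)}{2-m}<2$. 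In that case I instead place $\delta v$ partly in $L^2$ through interpolation, or put $\nabla v$ in $L^{2m}$ via the Gagliardo–Nirenberg bound already used for $\|\nabla v\|_{L^{2m}}$. I expect this exponent bookkeeping, namely getting precisely $\langle\tau\rangle^{-\frac{n-1}{2}(p-\frac2m)}[\Lambda_n]^{\frac2m}$ for every configuration of $p>2$ (or $p>3$ when $n=2$) and $1<m<2$, to be the main obstacle. The fallback is to allow the $L^2$ factor of $\delta v$ to be $\|\delta v\|_{H^1}\le\Lambda_n d_T$ and use $H^1\hookrightarrow L^{s}$ for $2\le s<\infty$ when $n=2$, or $s\le6$ when $n=3$.

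For \eqref{Difference-v-high}, I take $L^2$ norms directly:
\begin{align*}
\|(|v|+|\tilde v|)^{p-1}\delta v\|_{L^2}+\|(|v|+|\tilde v|)^{p-2}|\nabla v|\delta v\|_{L^2}+\||\tilde v|^{p-1}\nabla\delta v\|_{L^2}.
\end{align*}
I bound all but one factor in $L^\infty$ and the remaining one in $L^2$. This gives $R^{p-1}\langle\tau\rangle^{-\frac{n-1}{2}(p-1)}\Lambda_n(\tau)d_T$, with $\nabla v\in L^2$ costing $\Lambda_n R$ and $\delta v\in L^\infty$ carrying the decay. For \eqref{Difference-u}, I use the same identities with $q>2$. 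The $L^\infty$ and $L^1$ parts come from $\|u\|_{L^q}^{q-1}\|\delta u\|_{L^q}$ and $\|u\|_{L^\infty}^{q-1}\|\delta u\|_{L^\infty}$. The gradient part in $L^1$ is $\|u\|_{L^q}^{q-2}\|\nabla u\|_{L^q}\|\delta u\|_{L^q}+\|\tilde u\|_{L^q}^{q-1}\|\nabla\delta u\|_{L^q}$. The $L^2$ part is $\|u\|_{L^{2q}}^{q-1}\|\delta u\|_{L^{2q}}$. Here $\|\delta u\|_{L^q}$ and $\|\delta u\|_{L^{2q}}$ are controlled by interpolating the weak norms $W^{1,r_n}\cap L^\infty$ (recall $q\ge r_n$), giving the weight $\langle\tau\rangle^{-\frac n2(\frac1m-\frac1{r_n})}$ per factor. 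The strong factors are controlled by \eqref{u-strong-decay}.
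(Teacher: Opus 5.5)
Your treatment of the $v$-differences matches the paper's argument. You use the same Hölder exponents $\frac{2m(p-1)}{2-m}$ and $\frac{2m(p-2)}{2-m}$ and the same $L^2$--$L^\infty$ interpolation, and your gradient splitting is the paper's with $v$ and $\tilde v$ interchanged.

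\textbf{The gap is in the $W^{1,1}$ part of \eqref{Difference-u}.} You bound $|\tilde u|^{q-1}|\nabla(u-\tilde u)|$ in $L^1$ by $\|\tilde u\|_{L^q}^{q-1}\|\nabla(u-\tilde u)\|_{L^q}$. However, $d_T$ controls $\nabla(u-\tilde u)$ only in $L^{r_n}\cap L^2$. The interpolation of $W^{1,r_n}\cap L^\infty$ that you invoke applies to $u-\tilde u$ itself, not to its gradient. One has $r_n<q$ whenever $q>4$ for $n=2$, or $q>\frac{10}{3}$ for $n=3$. This is exactly the large-$q$ range the choice of $r_n$ was designed to cover. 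There, $\|\nabla(u-\tilde u)\|_{L^q}$ is not bounded by $d_T(w,\tilde w)$, so the estimate does not close. The fix, which is what the paper does, is to keep the differentiated difference in $L^{r_n}$ and put the coefficient in the dual space:
\[
\big\|(|u|+|\tilde u|)^{q-1}\nabla(u-\tilde u)\big\|_{L^1}\lesssim\Big(\|u\|_{L^{\frac{r_n(q-1)}{r_n-1}}}+\|\tilde u\|_{L^{\frac{r_n(q-1)}{r_n-1}}}\Big)^{q-1}\|\nabla(u-\tilde u)\|_{L^{r_n}}.
\]
Here $\frac{r_n(q-1)}{r_n-1}\geqslant r_n$ precisely because $q\geqslant r_n$. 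So the coefficient is controlled, with full decay, by the strong bound \eqref{Embeddings-u} together with interpolation between $L^{r_n}$ and $L^\infty$. Your other gradient term, with the strong factor $\nabla u$ in $L^q$, is fine by \eqref{Embeddings-u}.

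\textbf{Secondary point.} The ``main obstacle'' you anticipate for $(|v|+|\tilde v|)^{p-2}|\nabla v|\,|v-\tilde v|$ does not arise. The condition $\frac{2m(p-2)}{2-m}\geqslant2$ is equivalent to $m\geqslant\frac{2}{p-1}$. This is part of \eqref{Restriction-m-3D}, and it follows from $m>\frac{2}{p-2}$ in \eqref{Restriction-m-2D}. The paper closes this step exactly by invoking that restriction, so you should work with the admissible range of $m$ rather than only $1<m<2$. Your first fallback would also give the correct rate: put $v-\tilde v$ in $L^c$ with $\frac1c=\frac1m-\frac{p-1}{2}$, interpolated between $L^2$ and $L^\infty$, and put the other factors in $L^2$. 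It is simply unnecessary.
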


\begin{proof}
	Let $F_p(\psi):=|\psi|^p$. Since $p>2$, for every $(\tau,x)\in[0,T]\times\mb{R}^n$, one has
	\begin{align}\label{Fp-difference-pointwise}
		|F_p(v(\tau,x))-F_p(\tilde v(\tau,x))|&\lesssim\big(|v(\tau,x)|+|\tilde v(\tau,x)|\big)^{p-1}|v(\tau,x)-\tilde v(\tau,x)|,\notag\\
		|F_p'(v(\tau,x))-F_p'(\tilde v(\tau,x))|&\lesssim\big(|v(\tau,x)|+|\tilde v(\tau,x)|\big)^{p-2}|v(\tau,x)-\tilde v(\tau,x)|.
	\end{align}
	For the zero-order term in $L^m$, H\"older's inequality gives
	\begin{align*}
		&\|F_p(v(\tau,\cdot))-F_p(\tilde v(\tau,\cdot))\|_{L^m}\\
		&\qquad\lesssim\left(\|v(\tau,\cdot)\|_{L^{\frac{2m(p-1)}{2-m}}}+\|\tilde v(\tau,\cdot)\|_{L^{\frac{2m(p-1)}{2-m}}}\right)^{p-1}\|v(\tau,\cdot)-\tilde v(\tau,\cdot)\|_{L^2}.
	\end{align*}
	Interpolating between $L^2$ and $L^\infty$, and using the definitions of $Y_T$ and $\mb{Y}_T$, we derive
	\begin{align}\label{Fp-zero-W1m}
		\|F_p(v(\tau,\cdot))-F_p(\tilde v(\tau,\cdot))\|_{L^m}\lesssim R^{p-1}\langle\tau\rangle^{-\frac{n-1}{2}\left(p-\frac2m\right)}[\Lambda_n(\tau)]^{\frac2m}d_T(w,\tilde w).
	\end{align}
	For the gradient, we write
	\begin{align}\label{Fp-gradient-difference}
		\nabla\big(F_p(v(\tau,x))-F_p(\tilde v(\tau,x))\big)&=F_p'(v(\tau,x))\nabla\big(v(\tau,x)-\tilde v(\tau,x)\big)\notag\\
		&\quad\ +\big(F_p'(v(\tau,x))-F_p'(\tilde v(\tau,x))\big)\nabla\tilde v(\tau,x).
	\end{align}
	The first term is estimated exactly as in \eqref{Fp-zero-W1m}, with the $L^2$ norm of the difference replaced by its $H^1$ norm. For the second term, H\"older's inequality with the exponent $\frac{2m(p-2)}{2-m}$ suggests
	\begin{align*}
		&\big\|\big(F_p'(v(\tau,\cdot))-F_p'(\tilde v(\tau,\cdot))\big)\nabla\tilde v(\tau,\cdot)\big\|_{L^m}\\
		&\qquad\lesssim\left(\|v(\tau,\cdot)\|_{L^{\frac{2m(p-2)}{2-m}}}+\|\tilde v(\tau,\cdot)\|_{L^{\frac{2m(p-2)}{2-m}}}\right)^{p-2}\|v(\tau,\cdot)-\tilde v(\tau,\cdot)\|_{L^\infty}\|\nabla\tilde v(\tau,\cdot)\|_{L^2}.
	\end{align*}
	The exponent is at least $2$ because the assumptions on $m$ imply $m>\frac{2}{p-1}$. Interpolation between $L^2$ and $L^\infty$ therefore gives the same right-hand side as in \eqref{Fp-zero-W1m}. This proves \eqref{Difference-v-W1m}.

	The $H^1$ estimate is simpler. Indeed, \eqref{Fp-difference-pointwise} and \eqref{Fp-gradient-difference} imply
	\begin{align*}
		&\|F_p(v(\tau,\cdot))-F_p(\tilde v(\tau,\cdot))\|_{H^1}\\
		&\qquad\lesssim\big(\|v(\tau,\cdot)\|_{L^\infty}+\|\tilde v(\tau,\cdot)\|_{L^\infty}\big)^{p-1}\|v(\tau,\cdot)-\tilde v(\tau,\cdot)\|_{H^1}\\
		&\qquad\quad\ +\big(\|v(\tau,\cdot)\|_{L^\infty}+\|\tilde v(\tau,\cdot)\|_{L^\infty}\big)^{p-2}\|v(\tau,\cdot)-\tilde v(\tau,\cdot)\|_{L^\infty}\|\tilde v(\tau,\cdot)\|_{H^1}.
	\end{align*}
	Using the strong and weak time weights proves \eqref{Difference-v-high}.

	We now consider $G_q(\psi):=|\psi|^q$. The mean value theorem indicates
	\begin{align}\label{Gq-difference-pointwise}
		|G_q(u(\tau,x))-G_q(\tilde u(\tau,x))|&\lesssim\big(|u(\tau,x)|+|\tilde u(\tau,x)|\big)^{q-1}|u(\tau,x)-\tilde u(\tau,x)|,\notag\\
		|G_q'(u(\tau,x))-G_q'(\tilde u(\tau,x))|&\lesssim\big(|u(\tau,x)|+|\tilde u(\tau,x)|\big)^{q-2}|u(\tau,x)-\tilde u(\tau,x)|.
	\end{align}
	By the pointwise estimates in \eqref{Gq-difference-pointwise}, since $q\geqslant r_n$ and $W^{1,r_n}\cap L^\infty\hookrightarrow L^s$ for every $s\in[r_n,\infty]$ occurring below, the zero-order estimates follow from
	\begin{align*}
		\|G_q(u(\tau,\cdot))-G_q(\tilde u(\tau,\cdot))\|_{L^1}&\lesssim\big(\|u(\tau,\cdot)\|_{L^q}+\|\tilde u(\tau,\cdot)\|_{L^q}\big)^{q-1}\|u(\tau,\cdot)-\tilde u(\tau,\cdot)\|_{L^q},\\
		\|G_q(u(\tau,\cdot))-G_q(\tilde u(\tau,\cdot))\|_{L^2}&\lesssim\big(\|u(\tau,\cdot)\|_{L^{2q}}+\|\tilde u(\tau,\cdot)\|_{L^{2q}}\big)^{q-1}\|u(\tau,\cdot)-\tilde u(\tau,\cdot)\|_{L^{2q}},\\
		\|G_q(u(\tau,\cdot))-G_q(\tilde u(\tau,\cdot))\|_{L^\infty}&\lesssim\big(\|u(\tau,\cdot)\|_{L^\infty}+\|\tilde u(\tau,\cdot)\|_{L^\infty}\big)^{q-1}\|u(\tau,\cdot)-\tilde u(\tau,\cdot)\|_{L^\infty}.
	\end{align*}
	For the gradient, we use
	\begin{align*}
		\nabla\big(G_q(u(\tau,x))-G_q(\tilde u(\tau,x))\big)&=G_q'(u(\tau,x))\nabla\big(u(\tau,x)-\tilde u(\tau,x)\big)\\
		&\quad\ +\big(G_q'(u(\tau,x))-G_q'(\tilde u(\tau,x))\big)\nabla\tilde u(\tau,x).
	\end{align*}
	H\"older's inequality derives
	\begin{align*}
		&\big\|\big(|u(\tau,\cdot)|+|\tilde u(\tau,\cdot)|\big)^{q-1}\nabla\big(u(\tau,\cdot)-\tilde u(\tau,\cdot)\big)\big\|_{L^1}\\
		&\qquad\lesssim\left(\|u(\tau,\cdot)\|_{L^{\frac{r_n(q-1)}{r_n-1}}}+\|\tilde u(\tau,\cdot)\|_{L^{\frac{r_n(q-1)}{r_n-1}}}\right)^{q-1}\big\|\nabla\big(u(\tau,\cdot)-\tilde u(\tau,\cdot)\big)\big\|_{L^{r_n}},
			\end{align*}
			and
		\begin{align*}
		&\big\|\big(|u(\tau,\cdot)|+|\tilde u(\tau,\cdot)|\big)^{q-2}|u(\tau,\cdot)-\tilde u(\tau,\cdot)|\,|\nabla\tilde u(\tau,\cdot)|\,\big\|_{L^1}\\
		&\qquad\lesssim\left(\|u(\tau,\cdot)\|_{L^{\frac{qr_n(q-2)}{qr_n-q-r_n}}}+\|\tilde u(\tau,\cdot)\|_{L^{\frac{qr_n(q-2)}{qr_n-q-r_n}}}\right)^{q-2}\|u(\tau,\cdot)-\tilde u(\tau,\cdot)\|_{L^q}\|\nabla\tilde u(\tau,\cdot)\|_{L^{r_n}}.
	\end{align*}
	Due to $q\geqslant r_n$, both Lebesgue exponents are at least $r_n$. Hence, the required bounds follow from $W^{1,r_n}\cap L^\infty\hookrightarrow L^s$ for every $s\in[r_n,\infty]$. This proves \eqref{Difference-u}.
\end{proof}

To estimate the high-frequency part of the damped wave difference, we use
\begin{align}\label{Difference-high-embedding}
	H^1\hookrightarrow W^{1-\frac{2}{r_n},r_n}\hookrightarrow W^{(n-1)\left(\frac12-\frac1{r_n}\right),r_n}.
\end{align}
For $n=2$, the first embedding follows from $r_2\leqslant4$, while the second one follows from $r_2>2$. For $n=3$, the second embedding is an equality of differentiability orders, and the first one follows from $r_3\leqslant\frac{10}{3}$. Thus, \eqref{Difference-high-embedding} provides precisely the high-frequency regularity required by the lower-order version of Lemma~\ref{Lemma-damped-wave}.

Applying the lower-order version of Lemma~\ref{Lemma-damped-wave} with $(p_0,q_0)=(m,r_n)$, and then using Lemma~\ref{Lemma-Difference}, \eqref{Difference-high-embedding} and Lemma~\ref{Lemma-Time-Convolution}, we obtain
\begin{align*}
	&\|\ml{N}_1[v](t,\cdot)-\ml{N}_1[\tilde v](t,\cdot)\|_{W^{1,r_n}}\\
	&\qquad \lesssim R^{p-1}d_T(w,\tilde w)\int_0^t\langle t-\tau\rangle^{-\frac n2\left(\frac1m-\frac1{r_n}\right)}\langle\tau\rangle^{-\frac{n-1}{2}\left(p-\frac2m\right)}[\Lambda_n(\tau)]^{\frac2m}\,\mathrm{d}\tau\\
	&\qquad\quad\ +R^{p-1}d_T(w,\tilde w)\int_0^t\mathrm{e}^{-c(t-\tau)}\langle\tau\rangle^{-\frac{n-1}{2}(p-1)}\Lambda_n(\tau)\,\mathrm{d}\tau\\
	&\qquad\lesssim\langle t\rangle^{-\frac n2\left(\frac1m-\frac1{r_n}\right)}R^{p-1}d_T(w,\tilde w).
\end{align*}
Analogously,
\begin{align*}
	\|\ml{N}_1[v](t,\cdot)-\ml{N}_1[\tilde v](t,\cdot)\|_{H^1}+\langle t\rangle\|\partial_t\ml{N}_1[v](t,\cdot)-\partial_t\ml{N}_1[\tilde v](t,\cdot)\|_{L^2}\lesssim R^{p-1}d_T(w,\tilde w).
\end{align*}

It remains to control the $L^\infty$ component in \eqref{Weak-Norm-X}. We know the following facts.
\begin{itemize}
	\item If $n=2$, the estimate follows directly from $W^{1,r_2}\hookrightarrow L^\infty$.
	\item If $n=3$ and $q>3$, then $r_3>3$, and the estimate follows from $W^{1,r_3}\hookrightarrow L^\infty$. Let now $n=3$ and $2<q\leqslant3$. Choose $3<\sigma\leqslant\frac{10}{3}$ sufficiently close to $3$ so that $m>\frac{3\sigma}{2\sigma+3}$.
	Such a choice is possible because $m>1$ and $\frac{3\sigma}{2\sigma+3}\downarrow1$ as $\sigma\downarrow3$.
\end{itemize}
	Since $H^1\hookrightarrow W^{1-\frac2\sigma,\sigma}$ and $W^{1,\sigma}\hookrightarrow L^\infty$, the lower-order damped wave estimate with target exponent $\sigma$ gives
\begin{align*}
	&\|\ml{N}_1[v](t,\cdot)-\ml{N}_1[\tilde v](t,\cdot)\|_{L^\infty}\lesssim\langle t\rangle^{-\frac32\left(\frac1m-\frac1\sigma\right)}
	R^{p-1}d_T(w,\tilde w).
\end{align*}
Because $\sigma>q$, one has 
\begin{align*}
\frac32\left(\frac1m-\frac1\sigma\right)
>\frac32\left(\frac1m-\frac1q\right),
\end{align*}
and hence this decay is stronger than the weight required in \eqref{Weak-Norm-X}. In other words,
\begin{align}\label{Weak-N1}
	\|\ml{N}_1[v]-\ml{N}_1[\tilde v]\|_{\mb{X}_T}\lesssim R^{p-1}d_T(w,\tilde w).
\end{align}

For the wave component, Lemma~\ref{Lemma-Wave} and \eqref{Difference-u} imply
\begin{align*}
	\|\ml{N}_2[u](t,\cdot)-\ml{N}_2[\tilde u](t,\cdot)\|_{L^\infty}&\lesssim R^{q-1}d_T(w,\tilde w)\int_0^t\langle t-\tau\rangle^{-\frac{n-1}{2}}\langle\tau\rangle^{-q\frac n2\left(\frac1m-\frac1{r_n}\right)}\,\mathrm{d}\tau\\
	&\lesssim\langle t\rangle^{-\frac{n-1}{2}}R^{q-1}d_T(w,\tilde w).
\end{align*}
Moreover, we find
\begin{align*}
	\|\ml{N}_2[u](t,\cdot)-\ml{N}_2[\tilde u](t,\cdot)\|_{H^1}&\lesssim R^{q-1}d_T(w,\tilde w)\int_0^t\Lambda_n(t-\tau)\langle\tau\rangle^{-q\frac n2\left(\frac1m-\frac1{r_n}\right)}\,\mathrm{d}\tau\\
	&\lesssim\Lambda_n(t)R^{q-1}d_T(w,\tilde w),
\end{align*}
and the corresponding time derivative is bounded in $L^2$ without the factor $\Lambda_n(t)$. One claims
\begin{align}\label{Weak-N2}
	\|\ml{N}_2[u]-\ml{N}_2[\tilde u]\|_{\mb{Y}_T}\lesssim R^{q-1}d_T(w,\tilde w).
\end{align}
Combining \eqref{Weak-N1} and \eqref{Weak-N2}, we conclude \eqref{Crucial-03}.

\subsection{Completion of the proof of Theorem~\ref{Thm-01}}

 Recall the size $\varepsilon$ of the initial data defined in \eqref{Size-of-initial-data}. By \eqref{Crucial-01} and \eqref{Crucial-03}, there exists a constant $C>0$, independent of $T$, such that $\ml{N}$ maps $B_R(T)$ into itself and satisfies
\begin{align}\label{Small-contraction-constant}
	d_T\big(\ml{N}[w],\ml{N}[\tilde w]\big)\leqslant\frac12\,d_T(w,\tilde w)
\end{align}
for all $w,\tilde w\in B_R(T)$, provided that
\begin{align*}
	R:=2C\varepsilon\ \ \mbox{and}\ \ C(R^{p-1}+R^{q-1})\leqslant\frac12.
\end{align*}

We define the Picard sequence by
\begin{align*}
	w^{(0)}:=(0,0)\ \ \mbox{and}\ \ w^{(k+1)}:=\ml{N}[w^{(k)}]
\end{align*}
for $k\in\mb{N}_0:=\mb{N}\cup\{0\}$. The self-map property shows inductively that $w^{(k)}\in B_R(T)$ for all $k\in\mb{N}_0$. Moreover, \eqref{Small-contraction-constant} gives
\begin{align*}
	d_T\big(w^{(k+1)},w^{(k)}\big)\leqslant\frac12\,d_T\big(w^{(k)},w^{(k-1)}\big)
\end{align*}
for every $k\in\mb{N}$. Hence, $\{w^{(k)}\}_{k\in\mb{N}_0}$ is a Cauchy sequence in $\mb{X}_T\times\mb{Y}_T$. By the completeness established in Section~\ref{Subsection-Weak-Contraction}, there exists $w=(u,v)\in\mb{X}_T\times\mb{Y}_T$ such that
\begin{align}\label{Picard-weak-convergence}
	d_T\big(w^{(k)},w\big)\to0\ \ \mbox{as}\ \ k\to\infty.
\end{align}

The convergence in \eqref{Picard-weak-convergence} implies
\begin{align*}
	u^{(k)}\to u\ \ &\mbox{in}\ \ \ml{C}\big([0,T],W^{1,r_n}\cap L^\infty\cap H^1\big),\\
	u_t^{(k)}\to u_t\ \ &\mbox{in}\ \ \ml{C}\big([0,T],L^2\big),\\
	v^{(k)}\to v\ \ &\mbox{in}\ \ \ml{C}\big([0,T],L^\infty\cap H^1\big),\\
	v_t^{(k)}\to v_t\ \ &\mbox{in}\ \ \ml{C}\big([0,T],L^2\big).
\end{align*}
It remains to recover the higher-order spatial regularity and the
corresponding time-weighted bounds. Because $w^{(k)}\in B_R(T)$ for every $k\in\mb{N}_0$, the sequences
\begin{align*}
	\left\{\langle t\rangle^{\frac{n}{2}\left(\frac{1}{m}-\frac{1}{r_n}\right)}u^{(k)}\right\}_{k\in\mb{N}_0},
	\ \ 
	\big\{u^{(k)}\big\}_{k\in\mb{N}_0},
	\ \ 
	\big\{\langle t\rangle u_t^{(k)}\big\}_{k\in\mb{N}_0},
\end{align*}
are bounded respectively in $L^\infty([0,T],W^{2,r_n})$, $L^\infty([0,T],H^2)$, $L^\infty([0,T],H^1)$, whereas
\begin{align*}
	\left\{[\Lambda_n(t)]^{-1}v^{(k)}\right\}_{k\in\mb{N}_0},
	\ \ 
	\big\{v_t^{(k)}\big\}_{k\in\mb{N}_0}
\end{align*}
are bounded respectively in $L^\infty([0,T],H^2)$ and $L^\infty([0,T],H^1)$. Recalling $1<r_n<\infty$, all the spatial Sobolev spaces appearing above are reflexive. Hence, by the vector-valued Banach--Alaoglu theorem and a diagonal extraction, there exist functions $U_1,U_2,U_3,V_1,V_2$ such that, along a subsequence,
\begin{align}
	\langle t\rangle^{\frac{n}{2}\left(\frac{1}{m}-\frac{1}{r_n}\right)}u^{(k)}\overset{\ast}{\rightharpoonup} U_1\ \ &\mbox{in}\ \ L^\infty\big([0,T],W^{2,r_n}\big),\label{Weak-star-limit-01}\\
	u^{(k)}\overset{\ast}{\rightharpoonup} U_2\ \ &\mbox{in}\ \ L^\infty\big([0,T],H^2\big),\label{Weak-star-limit-02}\\
	\langle t\rangle u_t^{(k)}\overset{\ast}{\rightharpoonup} U_3\ \ &\mbox{in}\ \ L^\infty\big([0,T],H^1\big),\label{Weak-star-limit-03}\\
	[\Lambda_n(t)]^{-1}v^{(k)}\overset{\ast}{\rightharpoonup} V_1\ \ &\mbox{in}\ \ L^\infty\big([0,T],H^2\big),\label{Weak-star-limit-04}\\
	v_t^{(k)}\overset{\ast}{\rightharpoonup} V_2\ \ &\mbox{in}\ \ L^\infty\big([0,T],H^1\big).\label{Weak-star-limit-05}
\end{align}
On the other hand, the strong convergence in \eqref{Picard-weak-convergence} implies convergence of the same sequences in the sense of distributions on $(0,T)\times\mb{R}^n$. By the uniqueness of distributional limits, we obtain
\begin{align*}
	U_1&=\langle t\rangle^{\frac{n}{2}\left(\frac{1}{m}-\frac{1}{r_n}\right)}
	u,&U_2&=u,&U_3&=\langle t\rangle u_t,\\
	V_1&=[\Lambda_n(t)]^{-1}v,&V_2&=v_t.
\end{align*}
The weak-$\ast$ lower semicontinuity of the corresponding
$L^\infty$ norms therefore gives
\begin{align*}
	&\left\|\langle t\rangle^{\frac{n}{2}\left(\frac{1}{m}-\frac{1}{r_n}\right)}u\right\|_{L^\infty([0,T],W^{2,r_n})}+\|u\|_{L^\infty([0,T],H^2)}+\|\langle t\rangle u_t\|_{L^\infty([0,T],H^1)}\\
	&+\big\|[\Lambda_n(t)]^{-1}v\big\|_{L^\infty([0,T],H^2)}+\|v_t\|_{L^\infty([0,T],H^1)}\\
	&\leqslant R.
\end{align*}
The remaining weighted $L^\infty$ bounds for $u$ and $v$ pass directly to the limit through the strong convergence in \eqref{Picard-weak-convergence}. Consequently,
\begin{align}
	\mathop{\rm ess\,sup}_{t\in[0,T]}
	\Big(&\langle t\rangle^{\frac{n}{2}\left(\frac{1}{m}-\frac{1}{r_n}\right)}\|u(t,\cdot)\|_{W^{2,r_n}}+\|u(t,\cdot)\|_{H^2}+\langle t\rangle\|u_t(t,\cdot)\|_{H^1}\nonumber\\
	&+\langle t\rangle^{\frac{n-1}{2}}\|v(t,\cdot)\|_{L^\infty}+[\Lambda_n(t)]^{-1}\|v(t,\cdot)\|_{H^2}+\|v_t(t,\cdot)\|_{H^1}\Big)\leqslant R.\label{Strong-bound-for-limit}
\end{align}
The bound \eqref{Strong-bound-for-limit} implies that the nonlinear compositions $|v|^p$ and $|u|^q$ are well-defined for almost every $t\in[0,T]$ and satisfy the estimates of Lemma~4.1 with the same constant $R$. Thus, the Duhamel operator $\ml{N}[w]$ is well-defined. The proof of the contraction estimate depends only on the bounds in \eqref{Strong-bound-for-limit}. Consequently, it extends to the weak closure of $B_R(T)$ and yields
\begin{align*}
	d_T\big(\ml{N}[w^{(k)}],\ml{N}[w]\big)\leqslant\frac12\,d_T\big(w^{(k)},w\big)\to0.
\end{align*}
On the other hand, $\ml{N}[w^{(k)}]=w^{(k+1)}$, and \eqref{Picard-weak-convergence} gives
\begin{align*}
	d_T\big(w^{(k+1)},w\big)\to0.
\end{align*}
Consequently, $d_T(\ml{N}[w],w)=0$, and hence $w=\ml{N}[w]$ in the weak evolution space. Applying the linear estimates to this Duhamel representation and using Lemma~\ref{Lemma-Nonlinear-Strong}, we obtain $\ml{N}[w]\in Z_T$ with $\|\ml{N}[w]\|_{Z_T}\leqslant R$. Therefore, $w\in B_R(T)$. In particular, the continuity of the linear solution operators and the Bochner integrals in the Duhamel formula gives the strong time continuity stated in Theorem~\ref{Thm-01}.

To prove uniqueness, let $w$ and $\tilde w$ be two fixed points in $B_R(T)$. By \eqref{Small-contraction-constant}, we conclude
\begin{align*}
	d_T(w,\tilde w)=d_T\big(\ml{N}[w],\ml{N}[\tilde w]\big)\leqslant\frac12\,d_T(w,\tilde w).
\end{align*}
Thus, $d_T(w,\tilde w)=0$, and therefore $w=\tilde w$. Finally, all constants are independent of $T$. The fixed points constructed on finite intervals are compatible by uniqueness and define a unique global in-time mild Sobolev solution on $[0,\infty)$. The estimates in Theorem~\ref{Thm-01} follow from the uniform bound $\|w\|_{Z_T}\leqslant R$. This completes the proof of Theorem~\ref{Thm-01}.

\subsection{Proof of Corollary~\ref{Cor-Free-Wave}}

 The key point is that the nonlinear source $|u|^q$ is integrable in the Sobolev norms propagated by the free wave group. In two space dimensions, the additional factor $\Lambda_2$ caused by the low-frequency growth of $W_1(t,|\nabla|)$ remains integrable because the polynomial decay exponent is strictly larger than $1$. 

Since the estimates in Theorem~\ref{Thm-01} are uniform with respect to the final time, we may define
\begin{align*}
	\|u\|_{X_\infty}:=\sup_{T>0}\|u\|_{X_T}<\infty.
\end{align*}
Then, by \eqref{Nonlinear-u-source}, for every $\tau\geqslant0$, we have
\begin{align}\label{Scattering-source-estimate}
	\|\,|u(\tau,\cdot)|^q\|_{H^1\cap L^1}\lesssim\langle\tau\rangle^{-q\frac n2\left(\frac1m-\frac1{r_n}\right)}\|u\|_{X_\infty}^q.
\end{align}
Recall from \eqref{Time-integrability-condition} that $q\frac n2\left(\frac1m-\frac1{r_n}\right)>1$. 
Therefore,
\begin{align}\label{Weighted-source-integrability}
	\int_0^\infty\Lambda_n(\tau)\|\,|u(\tau,\cdot)|^q\|_{H^1\cap L^1}\,\mathrm{d}\tau&\lesssim\|u\|_{X_\infty}^q\int_0^\infty\Lambda_n(\tau)\langle\tau\rangle^{-q\frac n2\left(\frac1m-\frac1{r_n}\right)}\,\mathrm{d}\tau<\infty.
\end{align}
Indeed, $\Lambda_3(\tau)=1$, while $\Lambda_2(\tau)=\sqrt{\ln(\mathrm{e}+\tau)}$.

By \eqref{W1-energy-bound} with $s_1=1$, it holds that
\begin{align}\label{Scattering-W1-bound}
	\|W_1(\tau,|\nabla|)|u(\tau,\cdot)|^q\|_{H^2}\lesssim\Lambda_n(\tau)\|\,|u(\tau,\cdot)|^q\|_{H^1\cap L^1}.
\end{align}
Moreover, the boundedness of $W_0(\tau,|\nabla|)$ on $H^1$ gives
\begin{align}\label{Scattering-W0-bound}
	\|W_0(\tau,|\nabla|)|u(\tau,\cdot)|^q\|_{H^1}\lesssim\|\,|u(\tau,\cdot)|^q\|_{H^1}.
\end{align}
The solution regularity in Theorem~\ref{Thm-01} and the composition estimates in Lemma~\ref{Lemma-Nonlinear-Strong} also imply the strong measurability of the two integrands. Hence, \eqref{Weighted-source-integrability}, \eqref{Scattering-W1-bound} and \eqref{Scattering-W0-bound} show that the following Bochner integrals converge absolutely in $H^2$ and $H^1$, respectively:
\begin{align}\label{Scattering-data}
	v_0^+(x)&:=v_0(x)-\int_0^\infty W_1(\tau,|\nabla|)|u(\tau,x)|^q\,\mathrm{d}\tau,\notag\\
	v_1^+(x)&:=v_1(x)+\int_0^\infty W_0(\tau,|\nabla|)|u(\tau,x)|^q\,\mathrm{d}\tau.
\end{align}

We define the free wave associated with these asymptotic data by
\begin{align}\label{Definition-v-plus}
	v^+(t,x):=W_0(t,|\nabla|)v_0^+(x)+W_1(t,|\nabla|)v_1^+(x).
\end{align}
For every $t,\tau\in\mb{R}$, the trigonometric identities for the Fourier multipliers yield
\begin{align}\label{Wave-group-identities}
	-W_0(t,|\nabla|)W_1(\tau,|\nabla|)+W_1(t,|\nabla|)W_0(\tau,|\nabla|)&=W_1(t-\tau,|\nabla|),\notag\\
	|\nabla|^2W_1(t,|\nabla|)W_1(\tau,|\nabla|)+W_0(t,|\nabla|)W_0(\tau,|\nabla|)&=W_0(t-\tau,|\nabla|).
\end{align}
Using \eqref{Scattering-data}, \eqref{Definition-v-plus} and the first identity in \eqref{Wave-group-identities}, we obtain
\begin{align*}
	v^+(t,x)=W_0(t,|\nabla|)v_0(x)+W_1(t,|\nabla|)v_1(x) +\int_0^\infty W_1(t-\tau,|\nabla|)|u(\tau,x)|^q\,\mathrm{d}\tau.
\end{align*}
On the other hand, the Duhamel formula for the undamped component reads
\begin{align*}
	v(t,x)=W_0(t,|\nabla|)v_0(x)+W_1(t,|\nabla|)v_1(x)+\int_0^tW_1(t-\tau,|\nabla|)|u(\tau,x)|^q\,\mathrm{d}\tau.
\end{align*}
Subtracting the preceding two identities gives
\begin{align}\label{Scattering-tail-v}
	v(t,x)-v^+(t,x)=-\int_t^\infty W_1(t-\tau,|\nabla|)|u(\tau,x)|^q\,\mathrm{d}\tau.
\end{align}
Since $\partial_tW_0(t,|\nabla|)=-|\nabla|^2W_1(t,|\nabla|)$ and $\partial_tW_1(t,|\nabla|)=W_0(t,|\nabla|)$, the second identity in \eqref{Wave-group-identities} similarly yields
\begin{align}\label{Scattering-tail-vt}
	v_t(t,x)-v_t^+(t,x)=-\int_t^\infty W_0(t-\tau,|\nabla|)|u(\tau,x)|^q\,\mathrm{d}\tau.
\end{align}

Notice that
\begin{align*}
	W_0(-s,|\nabla|)=W_0(s,|\nabla|)\ \ \mbox{and}\ \ W_1(-s,|\nabla|)=-W_1(s,|\nabla|).
\end{align*}
Therefore, the estimates in Lemma~\ref{Lemma-Wave} remain valid for negative times after replacing the time variable by its absolute value. Applying \eqref{W1-energy-bound} to \eqref{Scattering-tail-v}, and using $\Lambda_n(\tau-t)\leqslant\Lambda_n(\tau)$ for $\tau\geqslant t$, we derive
\begin{align*}
	\|v(t,\cdot)-v^+(t,\cdot)\|_{H^2}&\lesssim\int_t^\infty\Lambda_n(\tau-t)\|\,|u(\tau,\cdot)|^q\|_{H^1\cap L^1}\,\mathrm{d}\tau\\
	&\lesssim\int_t^\infty\Lambda_n(\tau)\|\,|u(\tau,\cdot)|^q\|_{H^1\cap L^1}\,\mathrm{d}\tau\to0
\end{align*}
as $t\to\infty$, because the last expression is the tail of the convergent integral in \eqref{Weighted-source-integrability}. Likewise, the boundedness of $W_0(t,|\nabla|)$ on $H^1$ and \eqref{Scattering-tail-vt} imply
\begin{align*}
	\|v_t(t,\cdot)-v_t^+(t,\cdot)\|_{H^1}&\lesssim\int_t^\infty\|\,|u(\tau,\cdot)|^q\|_{H^1}\,\mathrm{d}\tau\\
	&\lesssim\int_t^\infty\Lambda_n(\tau)\|\,|u(\tau,\cdot)|^q\|_{H^1\cap L^1}\,\mathrm{d}\tau\to0
\end{align*}
as $t\to\infty$. In view of \eqref{Definition-v-plus}, these two limits are exactly the convergence stated in Corollary~\ref{Cor-Free-Wave}. Hence, the proof is complete.

\section*{Acknowledgments}
Wenhui Chen is supported in part by the National Natural Science Foundation of China (grant No. 12301270) and the Guangdong Basic and Applied Basic Research Foundation (grant No. 2025A1515010240). The author would like to thank Kosuke Kita for kindly sharing the recent preprint \cite{Georgiev-Kita=2026}.


\begin{thebibliography}{99}
\bibitem{Agemi-Kurokawa-Takamura=2000}
\newblock R. Agemi, Y. Kurokawa, H. Takamura.
\newblock Critical curve for $p-q$ systems of nonlinear wave equations in three space dimensions.
\newblock \emph{J. Differential Equations} \textbf{167} (2000), no. 1, 87--133.

\bibitem{Chen=2022}
\newblock W. Chen.
\newblock Blow-up and lifespan estimates for Nakao's type problem with nonlinearities of derivative type.
\newblock \emph{Math. Methods Appl. Sci.} \textbf{45} (2022), no. 10, 5988--6004.

\bibitem{Chen-Dao=2023}
\newblock W. Chen, T. A. Dao.
\newblock Sharp lifespan estimates for the weakly coupled system of semilinear damped wave equations in the critical case.
\newblock \emph{Math. Ann.} \textbf{385} (2023), no. 1-2, 101--130.

\bibitem{Chen-Ikehata=2026}
\newblock W. Chen, R. Ikehata.
\newblock Large time behavior for the classical wave equation with different regular data and its applications.
\emph{Asymptot. Anal.}, to appear, DOI: 10.1177/09217134261440139.

\bibitem{Chen-Palmieri=2020}
\newblock W. Chen, A. Palmieri.
\newblock Nonexistence of global solutions for the semilinear Moore-Gibson-Thompson equation in the conservative case.
\newblock \emph{Discrete Contin. Dyn. Syst.} \textbf{40} (2020), no. 9, 5513--5540.

\bibitem{Chen-Palmieri=2026}
\newblock W. Chen, A. Palmieri.
\newblock Blow-up and sharp lifespan estimates for a weakly coupled system of semilinear wave equations on a compact Lie group.
\newblock Preprint, arXiv:2604.06626.

\bibitem{Chen-Takeda=2023}
\newblock W. Chen, H. Takeda.
\newblock Large-time asymptotic behavior for the classical thermoelastic system.
\newblock \emph{J. Differential Equations} \textbf{377} (2023), 809--848.

\bibitem{Chen-Reissig=2021}
\newblock W. Chen, M. Reissig.
\newblock Blow-up of solutions to Nakao's problem via an iteration argument.
\newblock \emph{J. Differential Equations} \textbf{275} (2021), 733--756.

\bibitem{Delsanto=1997}
\newblock D. Del Santo.
\newblock Global existence and blow-up for a hyperbolic system in three space dimensions.
\newblock \emph{Rend. Istit. Mat. Univ. Trieste} \textbf{29} (1997), no. 1-2, 115--140.

\bibitem{DelSanto-Georgiev-Mitidieri=1997}
\newblock D. Del Santo, V. Georgiev, E. Mitidieri.
\newblock Global existence of the solutions and formation of singularities for a class of hyperbolic systems.
\newblock \emph{Progr. Nonlinear Differential Equations Appl.}, \textbf{32}
Birkh\"auser, Boston, Inc., Boston, MA, 1997, 117--140.

\bibitem{DelSanto-Mitidieri=1998}
\newblock D. Del Santo, E. Mitidieri.
\newblock Blow-up of solutions of a hyperbolic system: the critical case.
\newblock \emph{Differ. Uravn.} \textbf{34} (1998), no. 9, 1155--1161, 1293. Translation in
\emph{Differential Equations} \textbf{34} (1998), no. 9, 1157--1163.

\bibitem{Fujita=1966}
\newblock H. Fujita.
\newblock On the blowing up of solutions of the Cauchy problem for $u_t=\Delta u+u^{1+\alpha}$.
\newblock \emph{J. Fac. Sci. Univ. Tokyo Sect. I} \textbf{13} (1966), 109--124.

\bibitem{Georgiev-Kita=2026}
\newblock V. Georgiev, K. Kita.
\newblock Weighted pointwise estimates for the damped wave equation and global solutions to Nakao’s problem in three dimensions.
\newblock Preprint.

\bibitem{Georgiev-Takamura-Zhou=2006}
\newblock V. Georgiev, H. Takamura, Y. Zhou.
\newblock The lifespan of solutions to nonlinear systems of a high-dimensional wave equation.
\newblock \emph{Nonlinear Anal.} \textbf{64} (2006), no. 10, 2215--2250.

\bibitem{Ikeda-Inui-Okamoto-Wakasugi=2019}
\newblock M. Ikeda, T. Inui, M. Okamoto, Y. Wakasugi.
\newblock $L^p$-$L^q$ estimates for the damped wave equation and the critical exponent for the nonlinear problem with slowly decaying data.
\newblock \emph{Commun. Pure Appl. Anal.} \textbf{18} (2019), no. 4, 1967--2008.

\bibitem{Ikeda-Sobajima-Wakasa=2019}
\newblock M. Ikeda, M. Sobajima, K. Wakasa.
\newblock Blow-up phenomena of semilinear wave equations and their weakly coupled systems.
\newblock \emph{J. Differential Equations} \textbf{267} (2019), no. 9, 5165--5201.

\bibitem{Ikehata=2023}
\newblock R. Ikehata.
\newblock $L^2$-blowup estimates of the wave equation and its application to local energy decay.
\newblock \emph{J. Hyperbolic Differ. Equ.} \textbf{20} (2023), no. 1, 259--275.

\bibitem{John=1979}
\newblock F. John.
\newblock Blow-up of solutions of nonlinear wave equations in three space dimensions.
\newblock \emph{Manuscripta Math.} \textbf{28} (1979), no. 1-3, 235--268.

\bibitem{John=1981}
\newblock F. John.
\newblock \emph{Plane Waves and Spherical Means Applied to Partial Differential Equations}.
\newblock Springer-Verlag, New York-Berlin, 1981.

\bibitem{Kita-Kusaba=2022}
\newblock K. Kita, R. Kusaba.
\newblock A remark on the blowing up of solutions to Nakao's problem.
\newblock \emph{J. Math. Anal. Appl.} \textbf{513} (2022), no. 1, Paper No. 126199, 20 pp.

\bibitem{Kubo-Ohta=1999}
\newblock H. Kubo, M. Ohta.
\newblock Critical blowup for systems of semilinear wave equations in low space dimensions.
\newblock \emph{J. Math. Anal. Appl.} \textbf{240} (1999), no. 2, 340--360.

\bibitem{Kurokawa=2005}
\newblock Y. Kurokawa.
\newblock The lifespan of radially symmetric solutions to nonlinear systems of odd dimensional wave equations.
\newblock \emph{Nonlinear Anal.} \textbf{60} (2005), no. 7, 1239--1275.

\bibitem{Kurokawa-Takamura=2003}
\newblock Y. Kurokawa, H. Takamura.
\newblock A weighted pointwise estimate for two dimensional wave equations and its applications to nonlinear systems.
\newblock \emph{Tsukuba J. Math.} \textbf{27} (2003), no. 2, 417--448.

\bibitem{Kurokawa-Takamura-Wakasa=2012}
\newblock Y. Kurokawa, H. Takamura, K. Wakasa.
\newblock The blow-up and lifespan of solutions to systems of semilinear wave equation with critical exponents in high dimensions.
\newblock \emph{Differential Integral Equations} \textbf{25} (2012), no. 3-4, 363--382.

\bibitem{Li-Zhou=1995}
\newblock T. T. Li, Y. Zhou.
\newblock Breakdown of solutions to $\square u+u_t=|u|^{1+\alpha}$.
\newblock \emph{Discrete Contin. Dynam. Systems} \textbf{1} (1995), no. 4, 503--520.

\bibitem{Li-Palmieri=2025}
\newblock Y. Li, A. Palmieri.
\newblock On the blow-up of solutions to a Nakao-type problem with a time-dependent damping term.
\newblock Preprint, arXiv:2510.17368.

\bibitem{Li-Palmieri2=2025}
\newblock Y. Li, A. Palmieri.
\newblock Blow-up results for a Nakao-type problem with a time-dependent damping term and derivative-type nonlinearities.
\newblock Preprint, arXiv:2510.18378.

\bibitem{Liu=2026}
\newblock M. Liu.
\newblock Quantitative blow-up via renormalized Kato theory: Resolving Nakao-type systems.
\newblock \emph{J. Differential Equations} \textbf{462} (2026), Paper No. 114165, 14 pp.

\bibitem{Nakao=2016}
\newblock M. Nakao.
\newblock Global existence to the initial-boundary value problem for a system of semilinear wave equations.
\newblock \emph{Nonlinear Anal.} \textbf{146} (2016), 233--257.

\bibitem{Nakao=2018}
\newblock M. Nakao.
\newblock Global existence to the initial-boundary value problem for a system of nonlinear diffusion and wave equations.
\newblock \emph{J. Differential Equations} \textbf{264} (2018), no. 1, 134--162.

\bibitem{Narazaki=2009}
\newblock T. Narazaki.
\newblock Global solutions to the Cauchy problem for the weakly coupled system of damped wave equations.
\newblock \emph{Discrete Contin. Dyn. Syst.} 2009, suppl., Dynamical systems, differential equations and applications. 7th AIMS Conference, 592--601.

\bibitem{Nishihara=2003}
\newblock K. Nishihara.
\newblock $L^p$-$L^q$ estimates of solutions to the damped wave equation in 3-dimensional space and their application.
\newblock \emph{Math. Z.} \textbf{244} (2003), no. 3, 631--649.

\bibitem{Nishihara=2012}
\newblock K. Nishihara.
\newblock Asymptotic behavior of solutions for a system of semilinear heat equations and the corresponding damped wave system.
\newblock \emph{Osaka J. Math.} \textbf{49} (2012), no. 2, 331--348.

\bibitem{Nishihara-Wakasugi=2014}
\newblock K. Nishihara, Y. Wakasugi.
\newblock Critical exponent for the Cauchy problem to the weakly coupled damped wave system.
\newblock \emph{Nonlinear Anal.} \textbf{108} (2014), 249--259.

\bibitem{Nishihara-Wakasugi=2015}
\newblock K. Nishihara, Y. Wakasugi.
\newblock Global existence of solutions for a weakly coupled system of semilinear damped wave equations.
\newblock \emph{J. Differential Equations} \textbf{259} (2015), no. 8, 4172--4201.

\bibitem{Palmieri-Reissig=2018}
\newblock A. Palmieri, M. Reissig.
\newblock Semi-linear wave models with power non-linearity and scale-invariant time-dependent mass and dissipation, II.
\newblock \emph{Math. Nachr.} \textbf{291} (2018), no. 11-12, 1859--1892.

\bibitem{Palmieri-Takamura=2023}
\newblock A. Palmieri, H. Takamura.
\newblock A blow-up result for a Nakao-type weakly coupled system with nonlinearities of derivative-type.
\newblock \emph{Math. Ann.} \textbf{387} (2023), no. 1-2, 111--132.

\bibitem{Strauss=1981}
\newblock W. A. Strauss.
\newblock Nonlinear scattering theory at low energy.
\newblock \emph{J. Functional Analysis} \textbf{41} (1981), no. 1, 110--133.

\bibitem{Sun-Wang=2007}
\newblock F. Sun, M. Wang.
\newblock Existence and nonexistence of global solutions for a nonlinear hyperbolic system with damping.
\newblock \emph{Nonlinear Anal.} \textbf{66} (2007), no. 12, 2889--2910.

\bibitem{Takeda=2026}
\newblock H. Takeda.
\newblock $L^2$-estimates for the linear elastic waves.
\newblock \emph{Math. Ann.} \textbf{394} (2026), no. 4, Paper No. 82, 31 pp.

\bibitem{Wakasugi=2017}
\newblock Y. Wakasugi.
\newblock A note on the blow-up of solutions to Nakao's problem.
\newblock \emph{Trends Math. Res. Perspect.} Birkh\"auser/Springer, Cham, 2017, 545--551.

\bibitem{Zhang=2001}
\newblock Q. S. Zhang.
\newblock A blow-up result for a nonlinear wave equation with damping: the critical case.
\newblock \emph{C. R. Acad. Sci. Paris S\'er. I Math.} \textbf{333} (2001), no. 2, 109--114.
\end{thebibliography}
\end{document}